\definecolor{rouge}{rgb}{0.7,0.00,0.00}
\definecolor{vert}{rgb}{0.00,0.5,0.00}
\definecolor{bleu}{rgb}{0.00,0.00,0.8}
\newtheorem{theorem}{Theorem}[section]
\newtheorem*{theorem*}{Theorem}
\newtheorem{lemma}[theorem]{Lemma}
\newtheorem{proposition}[theorem]{Proposition}
\newtheorem{condition}{Condition}
\newtheorem{conditionA}{A\kern-0.1mm}
\theoremstyle{definition}
\newtheorem{remark}[theorem]{Remark}
\def \eref#1{\hbox{(\ref{#1})}}
\numberwithin{equation}{section}
\def\geq{\geqslant}
\def\leq{\leqslant}
\def\RR{\mathbb{R}}
\def\PP{\mathbb{P}}
\def\EE{\mathbb{E}}
\def\vare{{\varepsilon}}
\def \eref#1{\hbox{(\ref{#1})}}
\def\EE{\mathbb{ E}}
\begin{document}

\title[Averaging principle for stochastic real Ginzburg-Landau
equation]
{ Averaging principle for stochastic real Ginzburg-Landau equation driven by $\alpha$-stable process}

\author{Xiaobin Sun}
\curraddr[Sun, X.]{ School of Mathematics and Statistics, Jiangsu Normal University, Xuzhou, 221116, China}
\email{xbsun@jsnu.edu.cn}


\author{Jianliang Zhai}
\curraddr[Zhai, J.]{ School of Mathematical Science, University of Science and Technology of China, Hefei, 230026, China.}
\email{zhaijl@ustc.edu.cn}

\begin{abstract}
In this paper, we study a system of stochastic partial differential equations with slow and fast time-scales, where the slow component is a stochastic real Ginzburg-Landau equation and the fast component is a stochastic reaction-diffusion equation, the system is driven by cylindrical $\alpha$-stable process with $\alpha\in (1,2)$. Using the classical Khasminskii approach based on time discretization and the techniques of stopping times, we show that the slow component strong converges to the solution of the corresponding averaged equation under some suitable conditions.
\end{abstract}

\date{\today}
\subjclass[2010]{ Primary 35R60}
\keywords{Stochastic real Ginzburg-Landau equation; Averaging principle; Ergodicity; Invariant measure; Strong convergence; cylindrical $\alpha$-stable.}

\maketitle

\section{Introduction}

The real Ginzburg-Landau equation is an important model equation in modern physics, and it was broadly studied in recent years from different points of view.
In this paper, we are interested in studying the averaging principle for stochastic real Ginzburg-Landau
equation driven by cylindrical $\alpha$-stable process, i.e., considering the following stochastic slow-fast system on torus $\mathbb{T}=\mathbb{R}/\mathbb{Z}$:
\begin{equation}\left\{\begin{array}{l}\label{Equation}
\displaystyle
d X^{\varepsilon}_t(\xi)=\big[\frac{\partial^2}{\partial \xi^2} X^{\varepsilon}_t(\xi)+X^{\varepsilon}_t(\xi)-(X^{\varepsilon}_t(\xi))^3+f(X^{\varepsilon}_t(\xi), Y^{\varepsilon}_t(\xi))\big]dt+dL_t,\quad X^{\varepsilon}_0(\xi)=x(\xi), \\
d Y^{\varepsilon}_t(\xi)
=\frac{1}{\varepsilon}\big[\frac{\partial^2}{\partial \xi^2} Y^{\varepsilon}_t(\xi)+g(X^{\varepsilon}_t(\xi), Y^{\varepsilon}_t(\xi))\big]dt+\frac{1}{\varepsilon^{1/\alpha}}dZ_t,\quad Y^{\varepsilon}_0(\xi)=y(\xi),\end{array}\right.
\end{equation}
where $\varepsilon >0$ is a small parameter
describing the ratio of time scales between the slow component $X^{\varepsilon}$
and fast component $Y^{\varepsilon}$.
The coefficients $f$ and $g$ are proper functions. $\{L_t\}_{t\geq 0}$ and  $\{Z_t\}_{t\geq 0}$ are mutually independent cylindrical $\alpha$-stable process with $\alpha\in (1,2).$

\vspace{0.1cm}
The theory of averaging principle has a long and rich history  in multiscale model, which has widely applications in nonlinear oscillations, chemical kinetics, biology, climate dynamics and many other areas. Averaging principle is essential to describe the asymptotic behavior of the system like (\ref{Equation}) for $\vare<<1$, and how dynamics of this kind of system depends on $\vare$ as $\vare\rightarrow 0$.  Bogoliubov and Mitropolsky \cite{BM} first studied the averaging principle for the deterministic systems. The averaging principle for stochastic differential equations was proved by Khasminskii \cite{K1}, see, e.g., \cite{B1,C1,CF,C2,FL,FLL,FLWL,G,GJ,GL,L,LRSX,PXY,WR,WRD,XPW} for further generalization. In most of the
existing literature,  Wiener noises are considered. For the jump-diffusion case, we refer to \cite{BYY, Givon, XML}.

\vspace{0.1cm}


It is well known that it is non-trival to deal with nonlinear terms. Recently, averaging principle for stochastic reaction-diffusion systems with nonlinear term has become an active research area which attracted much attention, see, for instance, stochastic reaction-diffusion equations with polynomial coefficients \cite{C2}, stochastic Burgers equation \cite{DSXZ}, stochastic two dimensional Navier-Stokes equations \cite{LSXZ},  stochastic Kuramoto-Sivashinsky equation \cite{GP}, stochastic Schr\"{o}dinger equation \cite{GP1} and stochastic Klein-Gordon equation \cite{GP2}. The noise considered in above references are all Wiener noise, and it is natural to study the stochastic systems driven by jump noise. Averaging principle for stochastic reaction-diffusion systems with nonlinear term driven by jump noise have not yet been considered to date to the best of our knowledge, as a consequence, this is one of the motivations for studying the system (\ref{Equation}).

\vspace{0.1cm}

The noise processes considered to date are mainly square integrable processes. However, a cylindrical/additive $\alpha$-stable process only has finite $p$th moment for $p\in(0,\alpha)$. The methods developed in the existing literature are not suitable to treat the case of cylindrical/additive $\alpha$-stable noises, therefore we require new and different techniques to deal with the cylindrical/additive $\alpha$-stable noise more carefully. Let us mention that Bao et al. \cite{BYY} have studied the averaging principle for two-time scale stochastic partial differential equations driven by additive $\alpha$-stable noises, but without the nonlinear term.
The most challenge in this paper is how to deal with the nonlinear term and cylindrical $\alpha$-stable noises. The techniques of stopping times will be used frequently.

\vspace{0.1cm}

Considering the cylindrical/additive $\alpha$-stable noises has theoretically interesting. As stated in \cite{BYY}, \emph{stochastic equations driven by (cylindrical/additive) $\alpha$-stable processes have proven
to have numerous applications in physics because such processes can be used to model systems
with heavy tails. As a result, such processes have received increasing attentions recently. } We refer to \cite{DXZ1,Ouyang,PZ,Sato,Wang Fengyu} and references therein  for more results and backgrounds, in which the  cylindrical/additive $\alpha$-stable noises have been considered.


\vspace{0.1cm}
 Under some assumptions, the aim of this paper is to prove the slow component $X^{\varepsilon}$ convergent to $\bar{X}$ in the strong sense, i.e., for any $T>0$ and $1\leq p<\alpha$,
\begin{eqnarray}
\mathbb{E}\left(\sup_{0\leq t\leq T}\|X^{\varepsilon}_t-\bar{X}_t\|^{p}_{L^2(\mathbb{T})}\right)\leq \frac{C_{p,k,T}(1+\|x\|^p_{\theta}+\|y\|^{p/2})}{\left(-\ln \vare\right)^{k}},\quad \forall k\in (0, \frac{\alpha-p}{6\alpha}), \label{main result}
\end{eqnarray}
where $(x,y)$ are initial value and belong to some certain spaces, $C_{p,k,T}$ is a positive constant only depends on $p,k,T$, and $\bar{X}$ is the solution of the corresponding averaged equation (see  Eq. \eref{1.3} below).

\vspace{0.1cm}
The proof of our main result is divided into several steps. Firstly, we follow the skills in \cite{Xu1} to give a-priori estimate of the solution $(X^{\varepsilon}_t, Y^{\varepsilon}_t)$, which is very important to constrict stopping times later. Meanwhile, we prove an estimate of $|X_{t}^{\varepsilon}-X_{s}^{\varepsilon}|$ when $s, t$ before the stopping time. Secondly, based on the Khasminskii discretization introduced in \cite{K1}, we split the interval $[0,T]$ into some subintervals of size $\delta>0$ which depends on $\varepsilon$, and on each interval $[k\delta, (k+1)\delta)]$, $k\geq 0$, we construct an auxiliary process $(\hat{X}_t^\varepsilon, \hat{Y}_t^\varepsilon)$ which associate with the system (\ref{Equation}). Finally, by controlling the difference processes $X_t^\varepsilon-\hat{X}_t^\varepsilon$ and $\hat{X}_t^\varepsilon-\bar{X}_t$ respectively, we get an estimate of $|X_t^\varepsilon-\bar{X}_t|$ when time $t$ before the stopping time. Moreover, we use the a-priori estimates of the $X^{\varepsilon}_t$ and $\bar X_t$  are used to control $|X_t^\varepsilon-\bar{X}_t|$ when time $t$ after the stopping time.

\vspace{0.1cm}
The paper is organized as follows. In the next section, we introduce some notation
and assumptions that we use throughout the paper and state the main result. The section 3 is devoted to proving the main result. The final section is the appendix, where we show the detailed proof of the existence and uniqueness of solution and the corresponding Galerkin approximation.

\vspace{0.1cm}
Along the paper, $C$, $C_p$, $C_{T}$ and $C_{p,T}$ denote positive constants which may change from line to line, where $C_p$ depends on $p$, $C_T$ depends on $T$, and $C_{p,T}$ depends on $p,T$.

\section{Notations and main results} \label{Sec Main Result}


For $p\geq 1$ and $k\in\mathbb{N}$, let $L^p(\mathbb{T})$ and $W^{k,2}(\mathbb{T})$ be the usual Sobolev space. Denote $|\cdot|_{L^p}$ the usual
norm of $L^p(\mathbb{T})$.
Set
$$H: = \{x\in L^2(\mathbb{T}): \int_{\mathbb{T}}x(\xi)d\xi=0\},$$
and equipped it with the following inner product and norm:
$$
\langle x, y\rangle:=\int_{\mathbb{T}}x(\xi)y(\xi)d\xi,\quad \|x\|:=\left(\int_{\mathbb{T}}x^2(\xi)d\xi\right)^{1/2},\quad x,y\in H.
$$
Then $(H,\|\cdot\|)$ is a separable real Hilbert space.

Denote the Laplacian operator $\Delta$ by
\begin{align*}
Ax := \Delta x := \frac{\partial^2}{\partial \xi^2}x,\quad x\in W^{k,2}(\mathbb{T})\cap H.
\end{align*}
The eigenfunctions of $\Delta$ are given by
$$
\{e_i: e_i=\sqrt{2}\cos(2\pi i\xi)\}_{i\in \{1,2,\ldots\}}\cup \{e_j: e_j=\sqrt{2}\sin(2\pi j\xi)\}_{j\in \{-1,-2,\ldots\}},
$$
which forms an orthonormal basis of $H$. For any $k\in \mathbb{Z}_{\ast}:=\mathbb{Z}\setminus \{0\}$,
$$Ae_k=-\lambda_ke_k \quad \text{with} \quad \lambda_k = 4\pi^2 |k|^2.$$

For any $s\in\RR$, we define
 $$H^s:=\mathscr{D}((-A)^{s/2}):=\left\{u=\sum_{k\in\mathbb{Z}_{\ast}}u_ke_k: u_k=\langle u,e_k\rangle\in \mathbb{R},~\sum_{k\in\mathbb{Z}_{\ast}}\lambda_k^{s}u_k^2<\infty\right\},$$
 and
 $$(-A)^{s/2}u:=\sum_{k\in\mathbb{Z}_{\ast}}\lambda_k^{s/2} u_ke_k,~~u\in\mathscr{D}((-A)^{s/2}),$$
with the associated norm
\begin{eqnarray*}
\|u\|_{s}:=\|(-A)^{s/2}u\|=\sqrt{\sum_{k\in\mathbb{Z}_{\ast}}\lambda_k^{s} u^2_k}.
\end{eqnarray*}
It is easy to see that $H^{0}=H$ and  $H^{-s}$ is the dual space of $H^s$. Without danger of confusion  the dual action is also denoted by $\langle\cdot,\cdot\rangle$. Denote $V:=H^{1}$, which is densely and compactly embedded in $H$. It is well known that $A$ is the infinitesimal generator of a strongly continuous semigroup
$\{e^{t A}\}_{t\geq0}$.


Define nonliear operator,
\begin{eqnarray*}
N(x)(\xi)=x(\xi)-[x(\xi)]^3,\quad x\in H.
\end{eqnarray*}

We shall often use the following inequalities:

\begin{eqnarray}
&&\| x\|_{\sigma_1} \leq C_{\sigma_1,\sigma_2}\|x\|_{\sigma_2},\quad x\in H^{\sigma_2},\sigma_1\leq\sigma_2;\label{P-1}\\
&&\| e^{tA}x-x \| \leq C_{\sigma} t^{\frac{\sigma}{2}}\|x\|_{\sigma},\quad x\in H^{\sigma},\sigma\geq0, t>0;\label{P0}\\
&&\|e^{tA}x\|\leq e^{-\lambda_1 t}\|x\|,\quad x\in H,  t\geq0;\label{P1/2}\\
&&\|e^{tA}x\|_{\sigma_2}\leq C_{\sigma_1,\sigma_2}t^{-\frac{\sigma_2-\sigma_1}{2}}\|x\|_{\sigma_1},\quad x\in H^{\sigma_2},\sigma_1\leq\sigma_2, t>0;\label{P1}\\
&&\|N(x)\|_{-\sigma}\leq C_{\sigma}(1+\|x\|^{3}_{\frac{1-\sigma}{3}}),\quad x\in H^{\frac{1-\sigma}{3}}, \sigma\in[0,1/2);\label{P2}\\
&&\|N(x)-N(y)\|\leq C(1+\|x\|^2_{1/2}+\|y\|^2_{1/2})\|x-y\|,\quad x,y\in H^{1/2};\label{P3}\\
&& \|N(x)-N(y)\|\leq C(1+\|x\|^2_{2\sigma}+\|y\|^2_{2\sigma})\|x-y\|_{2\sigma},\quad x,y\in H^{2\sigma},\sigma\geq 1/6; \label{P4}\\
&& \langle x, N(x)\rangle\leq \frac{1}{4}, \quad x\in H.\label{P5}
\end{eqnarray}
The properties of \eref{P-1}-\eref{P1} are well known, the proof of \eref{P3}-\eref{P5} can be founded in \cite[Appendix]{Xu1} and the proof of  \eref{P2} is stated in the appendix.

\vspace{0.3cm}
With the above notations, the system \eref{Equation} can be rewritten as:
\begin{equation}\left\{\begin{array}{l}\label{main equation}
\displaystyle
dX^{\vare}_t=\left[AX^{\vare}_t+N(X^{\vare}_t)+f(X^{\vare}_t, Y^{\vare}_t)\right]dt+dL_t,\quad X^{\vare}_0=x,\\
dY^{\vare}_t=\frac{1}{\vare}[AY^{\vare}_t+g(X^{\vare}_t, Y^{\vare}_t)]dt+\frac{1}{\vare^{1/\alpha}}dZ_t,\quad Y^{\vare}_0=y,\end{array}\right.
\end{equation}
where $\{L_t\}_{t\geq 0}$ and  $\{Z_t\}_{t\geq 0}$ are mutually independent cylindrical $\alpha$-stable process given by
$$
L_t=\sum_{k\in \mathbb{Z}_{\ast}}\beta_{k}L^{k}_{t}e_k,\quad Z_t=\sum_{k\in \mathbb{Z}_{\ast}}\gamma_{k}Z^{k}_{t}e_k,\quad t\geq 0,
$$
where $\alpha\in(1,2)$, $\{\beta_k\}_{k\in \mathbb{Z}_{\ast}}$ and $\{\gamma_k\}_{k\in\mathbb{Z}_{\ast}}$ are two given sequence of positive numbers and
$\{L^{k}\}_{k\in \mathbb{Z}_{\ast}}$ and $\{Z^{k}\}_{k\in \mathbb{Z}_{\ast}}$ are independent one dimensional $\alpha$-stable processes satisfying for any $k\in \mathbb{Z}_{\ast}$ and $t\geq0$,
$$\mathbb{E}[e^{i L^k_{t}h}]=\mathbb{E}[e^{i Z^k_{t}h}]=e^{-t|h|^{\alpha}}, \quad h\in \mathbb{R}.$$

\vspace{0.3cm}
We impose the following assumptions.
\begin{conditionA}\label{A1}
$f, g: H\times H \rightarrow H$ are Lipschitz continuous, i.e., there exist constants $C>0$ and $L_{f}, L_{g}>0$ such that for any $x_1,x_2,y_1,y_2\in H$,
\begin{align*}
&&\|f(x_1, y_1)-f(x_2, y_2)\|\leq L_{f}(\|x_1-x_2\| + \|y_1-y_2\|),\\
&&\|g(x_1, y_1)-g(x_2, y_2)\|\leq C\|x_1-x_2\| + L_g\|y_1-y_2\|.
\end{align*}
\end{conditionA}

\begin{conditionA}\label{A2}
There exist constants $C_1, C_2>0$ such that, for any $k\in\mathbb{Z}_{\ast}$,
$$
C_1\lambda_k^{-\beta}\leq \beta_k\leq C_2\lambda_k^{-\beta},\quad \text{with} \quad  \beta>\frac{1}{2}+\frac{1}{2\alpha}
$$
and
\begin{eqnarray}
\sum_{k\in \mathbb{Z}_{\ast}}\frac{\gamma_k^{\alpha}}{\lambda_k}<\infty. \label{I2.10}
\end{eqnarray}
\end{conditionA}

\begin{conditionA}\label{A3}
$f$ is uniformly bounded, i.e.,there exists $C>0$ such that
\begin{align*}
\sup_{x,y\in H}\|f(x, y)\|\leq C.
\end{align*}
\end{conditionA}

\begin{conditionA}\label{A4}
The smallest eigenvalue $\lambda_1$ of $-A$ and the Lipschitz constant $L_g$ satisfy
$$
\lambda_{1}-L_{g}>0.
$$
\end{conditionA}

\begin{remark}\label{Re1}
Under the condition \ref{A1}-\ref{A3}, for any given initial value $x, y\in H$, $\vare>0$, system \eqref{main equation} exists a unique mild
solution $X^{\varepsilon} \in D([0,\infty); H) \cap D((0, \infty); V)$, $Y^{\varepsilon}_t\in H$ (see Theorem \ref{EUS} below). By \cite{Liu-Zhai} and \cite{PZ},
in general, \eref{I2.10} does not imply that $Y^{\varepsilon}\in D([0,\infty); H)$, only implies $Y^{\varepsilon}_t\in H$, but it is enough for us to prove our main result.
Condition \ref{A4} is called the dissipative condition, which is used to give the uniform estimate of $Y^{\vare}$ with respect to $\vare$ and the exponential ergodicity of frozen equation (see Proposition \ref{ergodicity} below).
\end{remark}
\begin{remark}\label{Re2} Define
$$L_A(t):=\int^t_0 e^{(t-s)A}d L_s,\quad Z_A(t):=\int^t_0 e^{(t-s)A}d Z_s.$$
Refer to \cite[Lemma 3.1]{Xu1} and \cite[(4.12)]{PZ}, if condition \ref{A2} holds, then for any $T > 0$, $0 \leq\theta< \beta-\frac{1}{2\alpha}$ and $0<p<\alpha$,
\begin{eqnarray*}
\EE \left[\sup_{0\leq t\leq T}\|L_A(t)\|^p_{2\theta}\right]\leq C_1T^{p/\alpha},\quad \sup_{t\geq 0}\EE\|Z_A(t)\|^p\leq C_2,
\end{eqnarray*}
where $C_1$ depends on $\alpha,\theta,\beta,p$ and $C_2$ depends on $\alpha,p$.
\end{remark}

\medskip
%

Based on the Banach fixed point theorem, we have the following existence and uniqueness of the mild solution of system \eqref{main equation}, whose proof is stated in the appendix.

\begin{theorem}\label{EUS} Assume the conditions \ref{A1}-\ref{A3} hold. Then for every $\varepsilon>0$, $x\in H$, $y\in H$, system \eref{main equation} admits a unique mild solution $X^{\vare}.(\omega)\in D([0,\infty);H)\cap D((0,\infty);V)$ and $Y^{\vare}_t(\omega)\in H$, $t\geq 0$, $\PP$-a.s.. Moreover, for any $t\geq 0$
\begin{equation}\left\{\begin{array}{l}\label{A mild solution}
\displaystyle
X^{\varepsilon}_t=e^{tA}x+\int^t_0e^{(t-s)A}N(X^{\varepsilon}_s)ds+\int^t_0e^{(t-s)A}f(X^{\varepsilon}_s, Y^{\varepsilon}_s)ds+\int^t_0 e^{(t-s)A}dL_s,\\
Y^{\varepsilon}_t=e^{tA/\varepsilon}y+\frac{1}{\varepsilon}\int^t_0e^{(t-s)A/\varepsilon}g(X^{\varepsilon}_s,Y^{\varepsilon}_s)ds
+\frac{1}{\vare^{1/\alpha}}\int^t_0 e^{(t-s)A/\varepsilon}dZ_s.
\end{array}\right.
\end{equation}
\end{theorem}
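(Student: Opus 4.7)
The plan is to build the solution pathwise by absorbing the $\alpha$-stable noises through substitution and then apply the Banach contraction principle to the resulting random integral equations, globalizing via the dissipative structure. Set
\[
L_A(t) := \int_0^t e^{(t-s)A}dL_s, \qquad \mathcal{Z}^{\vare}_A(t) := \vare^{-1/\alpha}\int_0^t e^{(t-s)A/\vare}dZ_s,
\]
and introduce the shifted processes $V^{\vare}_t := X^{\vare}_t - L_A(t)$ and $W^{\vare}_t := Y^{\vare}_t - \mathcal{Z}^{\vare}_A(t)$. By Condition~\ref{A2} and Remark~\ref{Re2}, the hypothesis $\beta > \frac{1}{2}+\frac{1}{2\alpha}$ allows one to pick $\theta > 1/2$, so that the sample paths of $L_A$ lie in $D([0,T]; H^{2\theta}) \subset D([0,T]; V)$ almost surely, while $\mathcal{Z}^{\vare}_A(t) \in H$ for every $t$, almost surely. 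Fixing a sample outside a null set, the pair $(V^{\vare}, W^{\vare})$ satisfies coupled continuous-in-time random integral equations driven by these fixed trajectories.

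For local well-posedness on a small interval $[0,\tau]$, I would apply the contraction principle to the map $\Phi$ sending $(V, W)$ in a suitable weighted Banach subspace of $C([0,\tau]; H) \times C([0,\tau]; H)$ (with a $t^{1/2}$-weight on $\|V_t\|_1$ to absorb initial data in $H$ via the smoothing estimate \eref{P1}) to the right-hand sides of the Duhamel formulas evaluated at $(V + L_A, W + \mathcal{Z}^{\vare}_A)$. Condition~\ref{A1} provides Lipschitz control of the $f$- and $g$-contributions; the cubic term $N$, locally Lipschitz on $V$ by \eref{P3}--\eref{P4}, is handled through the smoothing \eref{P1} combined with the growth bound \eref{P2}. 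Choosing $\tau$ sufficiently small (depending on $\|x\|,\|y\|$ and the suprema of the convolutions on $[0,T]$) yields a strict contraction.

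To extend the local solution to all $t \geq 0$, I would derive global a priori bounds. Testing the equation for $V^{\vare}$ against $V^{\vare}$ in $H$, the dissipativity $\langle x, N(x)\rangle \leq \frac{1}{4}$ of \eref{P5} (applied after expanding $N(V^{\vare} + L_A)$ and controlling the $L_A$-terms using $L_A \in V$) combined with the uniform bound on $f$ from Condition~\ref{A3} yields a finite bound for $\|V^{\vare}_t\|$ on any compact interval. An analogous Gr\"onwall-type estimate for $W^{\vare}$, using the dissipativity of $A$ and the Lipschitz bound on $g$ from Condition~\ref{A1}, gives a bound on $\|W^{\vare}_t\|$ on any compact interval. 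Together these rule out finite-time blow-up and deliver a global solution on $[0,\infty)$. The additional regularity $X^{\vare} \in D((0,\infty); V)$ then follows from a standard bootstrap in the mild formulation using \eref{P1} and \eref{P2}.

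The main obstacle will be the cubic nonlinearity $N$: it is neither globally Lipschitz nor bounded on $H$, so the Banach fixed-point step is only local and the global extension genuinely relies on the dissipative inequality \eref{P5}. A secondary technical difficulty is that the scaled fast convolution $\mathcal{Z}^{\vare}_A$ is only known to take values in $H$ pointwise in $t$ and need not have regular paths in $H$; the fixed-point argument for the fast equation must therefore be set up so that only pointwise-in-$t$ control of $\mathcal{Z}^{\vare}_A$ is invoked when estimating the $g$-contribution, after which the Bochner integral against $e^{(t-s)A/\vare}$ recovers the continuity of $W^{\vare}$ in $H$.
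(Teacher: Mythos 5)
Your overall strategy --- subtracting the stochastic convolutions $L_A$ and $\mathcal{Z}^{\vare}_A$, running a pathwise Banach fixed-point argument in a weighted space for the shifted pair, and then globalizing via the dissipativity \eref{P5} and the boundedness of $f$ --- is exactly the route the paper takes in its appendix (Lemma \ref{AL4.1} and the proof of Theorem \ref{EUS}). However, your specific choice of weighted space does not close the contraction. You propose a $t^{1/2}$-weight on the $V$-norm, i.e.\ you control $\sup_{t}\,t^{1/2}\|V_t\|_{1}$ and estimate the cubic term through \eref{P4} with $2\sigma=1$. Inside the Duhamel integral this produces, on the ball where $t^{1/2}\|u_1(t)\|_1\leq B$,
\[
\bigl(1+\|u_1(s)\|_1^2+\|v_1(s)\|_1^2\bigr)\,\|u_1(s)-v_1(s)\|_1 \;\leq\; C\bigl(1+B^2 s^{-1}\bigr)\, s^{-1/2}\, d(u,v),
\]
and the resulting convolution $\int_0^t (t-s)^{-1/2}s^{-3/2}\,ds$ diverges, since $s^{-3/2}$ is not integrable at $s=0$. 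More generally, a weight $t^{\gamma}$ on the $H^{2\sigma}$-norm must satisfy $\gamma\geq\sigma$ (to absorb $x\in H$ via \eref{P1}) and $3\gamma<1$ (to keep the cubic self-interaction $s^{-3\gamma}$ integrable), which forces $\sigma<1/3$ and rules out the $V$-norm altogether. This is why the paper works at the threshold $\sigma=1/6$ of \eref{P4}, i.e.\ with the norm $\sup_t\,t^{1/6}\|u_1(t)\|_{1/3}$, obtaining a contraction factor of order $T^{1-3\sigma}=T^{1/2}$; the $D((0,\infty);V)$ regularity is then recovered \emph{a posteriori} by a bootstrap in the mild formulation, not built into the fixed-point space.

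A secondary point: the contraction only yields uniqueness within the weighted ball $S$. The paper supplements this with a separate Gronwall argument on an interval $[T,T_0]$, using the $V$-regularity of any two solutions away from $t=0$ together with \eref{P4}, to obtain uniqueness in the full class of mild solutions; your proposal should include such a step. Your treatment of the fast equation (invoking only pointwise-in-$t$ control of $\mathcal{Z}^{\vare}_A$ before integrating against $e^{(t-s)A/\vare}$) and your outline of the global extension via \eref{P5} and Condition \ref{A3} are consistent with what the paper does.
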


\vspace{0.3cm}
The main result of this paper is the following theorem.
\begin{theorem}\label{main result 1}
Assume the conditions \ref{A1}-\ref{A4} hold.
Then for any $x\in H^{\theta}$ with $\theta\in(1/2, 1]$,
$y\in H$, $T>0$ and $1\leq p<\alpha$,
\begin{align}
\mathbb{E} \left(\sup_{0\leq t\leq T} \|X_{t}^{\vare}-\bar{X}_{t}\|^{p} \right)\leq \frac{C_{p,k,T}(1+\|x\|^p_{\theta}+\|y\|^{p/2})}{\left(-\ln \vare\right)^{k}},\quad \forall k\in (0, \frac{\alpha-p}{6\alpha}), \label{2.6}
\end{align}
where $C_{p,k,T}$ is a positive constant and $\bar{X}$ is the solution of the corresponding averaged equation:
\begin{equation}\left\{\begin{array}{l}
\displaystyle d\bar{X}_{t}=\left[A\bar{X}_{t}+N(\bar{X}_t)+\bar{f}(\bar{X}_{t})\right]dt+d L_{t},\\
\bar{X}_{0}=x,\end{array}\right. \label{1.3}
\end{equation}
with the average $\bar{f}(x)=\int_{H}f(x,y)\mu^{x}(dy)$. Here $\mu^{x}$ is the unique invariant measure of the frozen equation
\begin{eqnarray*}
\left\{ \begin{aligned}
&dY_{t}=\left[AY_{t}+g(x,Y_{t})\right]dt+d \bar{Z}_{t},\\
&Y_{0}=y,
\end{aligned} \right.
\end{eqnarray*}
$\bar{Z}$ is a version of $Z$ and independent of $L$ and $Z$.
\end{theorem}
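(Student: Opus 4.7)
The plan is to adapt Khasminskii's time-discretization scheme, localized by stopping times, to bypass the absence of $L^p$ moments for $p\geq\alpha$ inherent to cylindrical $\alpha$-stable noise.

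\textbf{Step 1: a priori bounds and stopping times.} Using condition A4 together with the Lipschitz bound on $g$ from A1 and the moment estimate for $Z_A$ in Remark \ref{Re2}, I expect $\sup_{t\geq 0}\mathbb{E}\|Y^\varepsilon_t\|^{p/2}\leq C(1+\|y\|^{p/2})$ for any $0<p<\alpha$. For the slow component, combining the uniform bound A3 on $f$, the smoothing estimate (P1), the bound $\langle x,N(x)\rangle\leq 1/4$ from (P5), and the regularity of $L_A$ in $H^{2\theta}$ supplied by Remark \ref{Re2}, I expect $\mathbb{E}\|X^\varepsilon_t\|^p_\theta\leq C_{p,T}(1+\|x\|^p_\theta)$ for $\theta\in(1/2,1]$ and $1\leq p<\alpha$. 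I then introduce the stopping time $\tau^\varepsilon_R:=\inf\{t\geq 0:\|X^\varepsilon_t\|_\theta>R\}$, so that $\mathbb{P}(\tau^\varepsilon_R\leq T)\leq C_{p,T}(1+\|x\|_\theta^p)R^{-p}$ by Markov, and prove a H\"older-in-time estimate $\mathbb{E}[\|X^\varepsilon_t-X^\varepsilon_s\|^p\mathbf{1}_{\{t\wedge s<\tau^\varepsilon_R\}}]\leq C_{R,p,T}|t-s|^{\eta p}$ for some $\eta>0$, via (P0), (P1), (P4) and Remark \ref{Re2}.

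\textbf{Step 2: auxiliary process and the two differences.} Fix $\delta=\delta(\varepsilon)>0$ to be tuned later. On each interval $[k\delta,(k+1)\delta)$ define the auxiliary fast process $\hat{Y}^\varepsilon$ by freezing the slow component at $X^\varepsilon_{k\delta}$, with $\hat{Y}^\varepsilon_{k\delta}=Y^\varepsilon_{k\delta}$, and the auxiliary slow process $\hat{X}^\varepsilon$ by replacing $Y^\varepsilon$ with $\hat{Y}^\varepsilon$ in the drift, starting at $\hat{X}^\varepsilon_{k\delta}=X^\varepsilon_{k\delta}$. The time change $s\mapsto k\delta+\varepsilon s$ together with the $1/\alpha$-self-similarity of $Z$ shows that, conditionally on $\mathcal{F}_{k\delta}$, the rescaled process $(\hat{Y}^\varepsilon_{k\delta+\varepsilon s})_{s\geq 0}$ has the law of the frozen equation with parameter $X^\varepsilon_{k\delta}$ and initial datum $Y^\varepsilon_{k\delta}$, driven by an independent copy $\bar{Z}$ of $Z$. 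For $X^\varepsilon-\hat{X}^\varepsilon$, the mild formulation, the Lipschitz properties of $f,g$, the locally Lipschitz bound (P4) on $N$ on the good event, and Gronwall on both time scales give a bound of order $\delta^{\eta p}$, with polynomial dependence on $R$. For $\hat{X}^\varepsilon-\bar{X}$, the exponential ergodicity of the frozen equation (which holds under A4, as announced in Remark \ref{Re1}) combined with the rescaling above yields
\begin{align*}
\mathbb{E}\Bigl\|\tfrac{1}{\delta}\int_{k\delta}^{(k+1)\delta}f(X^\varepsilon_{k\delta},\hat{Y}^\varepsilon_s)\,ds-\bar{f}(X^\varepsilon_{k\delta})\Bigr\|^p\leq C_{R}(\varepsilon/\delta)^{p\kappa}
\end{align*}
on the good event, for some $\kappa>0$, and a final Gronwall argument using (P4) closes the second difference.

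\textbf{Step 3: rate and main obstacle.} Splitting $\mathbb{E}\sup_{0\leq t\leq T}\|X^\varepsilon_t-\bar{X}_t\|^p$ according to $\{\tau^\varepsilon_R\leq T\}$ and its complement, and controlling the bad event via the a priori estimates on $X^\varepsilon$ and $\bar{X}$ together with $\mathbb{P}(\tau^\varepsilon_R\leq T)^{1-p/q}$ for some $p<q<\alpha$, I expect a bound of the schematic form
\begin{align*}
C_{p,T}(1+\|x\|_\theta^p+\|y\|^{p/2})\Bigl[R^{3p}\delta^{\eta p}+R^{3p}(\varepsilon/\delta)^{p\kappa}+R^{-(\alpha-p)}\Bigr].
\end{align*}
The main obstacle is precisely the balancing of these three terms: the polynomial factor $R^{3p}$ on the good event, originating from the cubic behaviour of $N$ through (P4) composed with a Gronwall exponential, cannot be compensated by any polynomial-in-$\varepsilon$ choice of $R$, since the bad-event term only decays like $R^{-(\alpha-p)}$. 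One is therefore forced to choose $R$ of logarithmic size $R=(-\ln\varepsilon)^\sigma$ for an appropriate $\sigma>0$ and $\delta$ a suitable power of $\varepsilon$; optimizing these choices yields the $(-\ln\varepsilon)^{-k}$ rate, with the admissible range $k\in(0,(\alpha-p)/(6\alpha))$ stated in the theorem, the denominator $6\alpha=2\cdot 3\cdot\alpha$ reflecting exactly the cubic nonlinearity of $N$ combined with the $\alpha$-stable moment constraint.
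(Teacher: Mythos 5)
Your overall architecture coincides with the paper's: Khasminskii discretization, localization by stopping times, identification of the rescaled auxiliary fast process with the frozen equation, and a good/bad event splitting with a logarithmic choice of $R$. However, there are two genuine gaps in the quantitative backbone. First, the unlocalized bound $\mathbb{E}\|X^{\varepsilon}_t\|^{p}_{\theta}\leq C_{p,T}(1+\|x\|^{p}_{\theta})$ that you posit in Step 1 is not available. The Duhamel estimate for $\|X^{\varepsilon}_t\|_{\theta}$ forces you through $\int_0^t(t-s)^{-1/2}\|X^{\varepsilon}_s\|^{2}\|X^{\varepsilon}_s\|_{\theta}\,ds$ (via \eref{P2} and interpolation), and closing this requires either a pathwise bound on $\|X^{\varepsilon}_s\|$ or moments of order strictly larger than $2$, which do not exist for $\alpha\in(1,2)$. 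This is exactly why the paper defines $\tau^{\varepsilon}_R$ through the weaker $H$-norm (for which first moments suffice to get $\mathbb{P}(T>\hat{\tau}^{\varepsilon}_R)\leq C_T(1+\|x\|)/R$) and proves the $H^{\theta}$ bound only up to $\tau^{\varepsilon}_R$, at the price of a factor $e^{C_{p,T}R^{6}}$ (Lemma \ref{SOX}). Your stopping time built on $\|X^{\varepsilon}_t\|_{\theta}>R$ and the Markov bound $\mathbb{P}(\tau^{\varepsilon}_R\leq T)\leq CR^{-p}$ therefore rest on an estimate you cannot prove.

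Second, and more importantly, the dependence on $R$ of all good-event estimates is exponential, of the form $e^{CR^{6}}$ (the cubic nonlinearity produces an $R^{2}$ coefficient in the Gronwall integrand, which becomes $R^{6}$ after taking third powers to handle the singular kernel), not polynomial as in your schematic bound $R^{3p}\delta^{\eta p}+R^{3p}(\varepsilon/\delta)^{p\kappa}+R^{-(\alpha-p)}$. This matters because your stated ``main obstacle'' is logically inverted: if the good-event factor really were the polynomial $R^{3p}$, one could take $R$ a small negative power of $\varepsilon$ and obtain a polynomial convergence rate, so nothing would force the logarithmic rate. It is precisely the factor $e^{CR^{6}}$ that forces $R^{6}\sim\ln(1/\varepsilon)$, whence $R^{-\gamma}\sim(-\ln\varepsilon)^{-\gamma/6}$ on the bad event; with the paper's bad-event decay $R^{-(p'-p)/p'}$ for $p<p'<\alpha$ (obtained by H\"older from the first-moment Markov bound) this gives exactly the range $k<(\alpha-p)/(6\alpha)$. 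Note also that your claimed decay $R^{-(\alpha-p)}$ would yield the larger range $k<(\alpha-p)/6$, inconsistent with the exponent you then assert; the discrepancy indicates that the balancing step must be carried out with the actual $e^{CR^{6}}$ factors and the H\"older exponent $(p'-p)/p'$ rather than inferred from the final statement.
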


\section{Proof of Theorem \ref{main result 1}} \label{Sec Proof of Thm1}

In this section, we are devoted to proving Theorem \ref{main result 1}.
The proof consists of the following several steps. In the first step, we give a priori estimate of the solution $(X^{\varepsilon}_t, Y^{\varepsilon}_t)$ in Lemma \ref{PMY}, which is used to construct a stopping time $\tau^{\vare}_R$. Then Lemma \ref{SOX} gives a uniform estimate of $\|X_{t}^{\varepsilon}\|_{\theta}$ when $t\leq T\wedge \tau^{\varepsilon}_R$ for $\theta\in(1/2, 1]$, which is used to obtain an estimate of the expectation of $X_{t}^{\varepsilon}-X_{s}^{\varepsilon}$ when $0\leq s\leq t\leq T\wedge\tau^{\vare}_R$ in Lemma \ref{COX}. In the second step, following the idea inspired by Khasminskii in \cite{K1}, we introduce an auxiliary process $(\hat{X}_{t}^{\varepsilon},\hat{Y}_{t}^{\varepsilon})\in H \times H$ and also give the uniform bounds, see Lemma \ref{MDY}. Meanwhile, we introduce a new stopping time $\tilde{\tau}^{\vare}_R\leq \tau^{\vare}_R$. Then Lemma \ref{DEY} is used to deduce an estimate of the difference process $X^{\varepsilon}_t-\hat{X}_{t}^{\varepsilon}$ when $t\leq T\wedge \tilde{\tau}^{\vare}_R$, which will be stated in Lemma \ref{DEX}. In the third step, we study the frozen equation and average equation. After defining another stopping time $\hat{\tau}^{\vare}_R\leq\tilde{\tau}^{\vare}_R$, we give a control of $\hat{X}_{t}^{\varepsilon}-\bar{X}_{t}$ when $t\leq T\wedge \hat{\tau}^{\vare}_R$ in Lemma \ref{ESX}. Finally, in order to prove the main result, it is sufficient to control the term of time after the stopping $\hat{\tau}^{\vare}_R$, which will be done by the a-priori estimates of processes $X^{\varepsilon}_t$ and $\bar X_t$ (see Lemmas \ref{PMY} and \ref{barX} below). Note that we always assume conditions \ref{A1}-\ref{A4} hold in this section.

 \subsection{Some a-priori estimates of \texorpdfstring{$(X^{\varepsilon}_t, Y^{\varepsilon}_t)$} {Lg}}

We first prove the uniform bounds for $p$-moment of the solutions $X_{t}^{\varepsilon}$ and $Y_{t}^{\varepsilon}$ for the system \eref{main equation}, with respect to $\varepsilon \in (0,1)$ and $t \in [0,T]$.
The main proof follows the techniques used in \cite{DXZ}, \cite{DXZ1} and \cite{Xu1}, where the authors deal with the 2D stochastic Navier-Stokes equation, 1D stochastic Burgers' equation and stochastic real Ginzburg-Landau equation driven by $\alpha$-stable noise, respectively.

Inspired by the above references, we first have a fast review about the purely jump L\'evy process as following.
$\{L^{k}\}_{k\in\mathbb{Z}_{\ast}}$ are independent one dimensional $\alpha$-stable processes, so they are purely jump L\'evy processes and have the same characteristic function, i.e.,
$$\mathbb{E}e^{i\xi L^{k}_{t}}=e^{t\psi(\xi)}, \quad \forall t>0, k\in\mathbb{Z}_{\ast},$$
$\psi(\xi)$ is a complex valued function called L\'evy symbol given by
$$\psi(\xi)=\int_{\mathbb{R}\setminus\{0\}}(e^{i\xi y}-1-i\xi y 1_{\{|y|_\leq 1\}})\nu(dy),$$
$\nu(dx)=\frac{c}{|x|^{1+\alpha}}dx$ is the L\'evy measure with $c>0$ and satisfies $\int_{\mathbb{R}\setminus\{0\}}1\wedge|y|^{2}\nu(dy)<\infty.$

For $t>0$ and $\Gamma\in\mathscr{B}(\mathbb{R}\setminus\{0\})$, the Poisson random measure associated with $L^{k}$ is defined by
$$N^{k}(t,\Gamma)=\sum_{s\leq t}1_{\Gamma}(L^{k}_s-L^{k}_{s-}),$$
and the corresponding compensated Poisson measure is given by
$$\widetilde{N}^{k}(t,\Gamma)=N^{k}(t,\Gamma)-t\nu(\Gamma).$$
By L\'evy-It\^o's decomposition, one has
$$L^{k}_{t}=\int_{|x|\leq1}x\widetilde{N}^{k}(t,dx)+\int_{|x|>1}x N^{k}(t,dx).$$

\begin{lemma} \label{PMY}
For any $x,y\in H$, $1\leq p<\alpha$ and $T>0$, there exist constants $C_{p,T},\ C_{T}>0$ such that for all $\vare\in(0,1)$,
\begin{align} \label{Control X}
\mathbb{E}\left(\sup_{0\leq t\leq T}\|X_{t}^{\varepsilon} \|^p \right)+\EE\int^T_0 \frac{\|X_{t}^{\varepsilon} \|^2_1}{(\|X_{t}^{\varepsilon} \|^2+1)^{1-p/2}}dt
\leq  C_{p,T}(1+ \|x\| ^p),
\end{align}
\begin{align} \label{Control Y}
\sup_{0\leq t\leq T}
\mathbb{E} \|Y_{t}^{\varepsilon} \|
\leq C_{T}(1+ \|x\|  + \|y\| ).
\end{align}
\end{lemma}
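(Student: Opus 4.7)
The plan is to handle the two bounds separately, with the $X^\vare$-estimate being the substantive one; the $Y^\vare$-estimate then follows from the mild formulation, Remark \ref{Re2}, and dissipativity \ref{A4}. Following \cite{Xu1}, the $X^\vare$-estimate blends the pointwise cubic dissipativity \eref{P5} with the moment information on the stochastic convolution from Remark \ref{Re2}. The restriction $p < \alpha$ is natural: it is the ceiling imposed on all absolute moments of the cylindrical $\alpha$-stable noise.

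For \eref{Control X}, I would apply It\^o's formula for Hilbert-space valued jump processes to $\phi(x) := (1 + \|x\|^2)^{p/2}$, noting that $D\phi(x)\cdot h = p(1+\|x\|^2)^{p/2-1}\langle x, h\rangle$. The drift produces
\[
p(1+\|X^\vare\|^2)^{p/2-1}\!\left[-\|X^\vare\|_1^2 + \langle X^\vare, N(X^\vare)\rangle + \langle X^\vare, f(X^\vare, Y^\vare)\rangle\right],
\]
whose first summand is exactly the weighted dissipation in \eref{Control X}; by \eref{P5} the cubic contribution is at most $p/4$, and by \ref{A3} the $f$-term is dominated by $Cp(1+\|X^\vare\|^2)^{p/2}$. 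For the jump compensator I would split into $\{\|h\|\leq 1\}$ and $\{\|h\|>1\}$: a second-order Taylor bound yields $C(1+\|x\|^2)^{p/2-1}\|h\|^2$ on small jumps, while the concavity of $r\mapsto (1+r)^{p/2}$ yields $|\phi(x+h)-\phi(x)| \leq C(\|x\|^{p/2}\|h\|^{p/2} + \|h\|^p)$ on large jumps. Both are integrable against the L\'evy measure $\nu_L$ of $L$: the small-jump part reduces to $\sum_k \beta_k^\alpha$, finite by \ref{A2} (since $\alpha\beta > 1/2$), and the large-jump part to $\sum_k \beta_k^\alpha \int_1^\infty r^{p-1-\alpha}dr$, finite precisely because $p<\alpha$. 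Gr\"onwall then delivers the $\sup_t \EE$ version, and a Kunita-type maximal inequality with exponent $p<\alpha$ applied to the compensated Poisson martingale upgrades this to \eref{Control X}.

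For \eref{Control Y}, I would use the mild representation and \eref{P1/2} to get
\[
\|Y^\vare_t\| \leq e^{-\lambda_1 t/\vare}\|y\| + \frac{1}{\vare}\int_0^t e^{-\lambda_1(t-s)/\vare}\|g(X^\vare_s, Y^\vare_s)\|\,ds + \|\tilde Z^\vare_t\|,
\]
where $\tilde Z^\vare_t := \vare^{-1/\alpha}\int_0^t e^{(t-s)A/\vare}dZ_s$. The time change $s\mapsto s/\vare$ gives $\tilde Z^\vare_t \stackrel{d}{=} Z_A(t/\vare)$, hence $\EE\|\tilde Z^\vare_t\|\leq C$ uniformly in $(\vare, t)$ by Remark \ref{Re2}. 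Inserting the Lipschitz bound from \ref{A1} and the already-proved $\EE\|X^\vare_s\| \leq C_T(1+\|x\|)$ (from \eref{Control X} with $p=1$), taking expectation, and running a singular Gr\"onwall argument that exploits $\lambda_1 > L_g$ from \ref{A4} to cancel the $e^{\lambda_1 t/\vare}$ prefactor, yields \eref{Control Y} with a constant independent of $\vare$.

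The main obstacle is the passage from $\sup_t \EE$ to $\EE\sup_t$ in the $X^\vare$-estimate, since the infinite $\alpha$-moment of $L$ rules out the standard $L^2$-BDG whenever $p\in[\alpha/2,\alpha)$. An equivalent route, explicitly used in \cite{Xu1}, is the shift $\Lambda^\vare_t := X^\vare_t - L_A(t)$: the shifted process solves the jump-free random PDE $\partial_t\Lambda^\vare = A\Lambda^\vare + N(\Lambda^\vare + L_A) + f(X^\vare, Y^\vare)$ and is amenable to a pathwise energy estimate in which the cubic expansion of $\langle \Lambda^\vare, N(\Lambda^\vare+L_A)\rangle$ balances the beneficial $-\|\Lambda^\vare\|_{L^4}^4$ against mixed products with $L_A$; the delicate point is to absorb these cross terms in such a way that the moments of $L_A$ eventually invoked under $\EE$ stay strictly below $\alpha$, which is achieved through the one-dimensional Sobolev embedding $H^{2\theta}\hookrightarrow L^4$ and Remark \ref{Re2}.
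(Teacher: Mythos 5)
Your overall strategy for \eref{Control X} --- It\^o's formula applied to $(1+\|x\|^2)^{p/2}$, the drift controlled by \eref{P5} and condition \ref{A3}, and the jump compensator split at $|x|=1$ --- is exactly the paper's, and your derivation of \eref{Control Y} (mild formulation, \eref{P1/2}, the self-similar time change $\tilde Z^\vare_t\stackrel{d}{=}Z_A(t/\vare)$, and Gr\"onwall using $L_g<\lambda_1$) coincides with the paper's argument essentially line by line. Two points, however, do not go through as written. First, your large-jump bound $|\phi(x+h)-\phi(x)|\le C(\|x\|^{p/2}\|h\|^{p/2}+\|h\|^p)$ leads, after summing over the modes $h=x\beta_ke_k$, to the series $\sum_k\beta_k^{p/2}\asymp\sum_k|k|^{-\beta p}$, which condition \ref{A2} does \emph{not} make finite: for $p=1$ convergence requires $\beta>1$, whereas \ref{A2} only guarantees $\beta>\frac12+\frac1{2\alpha}$, which can be less than $1$. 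The paper instead uses the mean-value bound $|U(x+h)-U(x)|\le\sup_{\xi}\|DU(x+\xi h)\|\,\|h\|\le C(1+\|x\|^{p-1}+\|h\|^{p-1})\|h\|$, which produces the summable series $\sum_k\beta_k$ and $\sum_k\beta_k^{p}$ (both finite since $2\beta>1$ and $p\ge1$) together with $\int_{|x|>1}|x|^{p}\nu(dx)<\infty$ from $p<\alpha$.

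Second, the step you single out as ``the main obstacle'' --- passing from $\sup_t\EE$ to $\EE\sup_t$ --- is left unresolved: the Kunita-type inequality is only named, and the alternative route via $\Lambda^\vare=X^\vare-L_A$ is acknowledged to hinge on absorbing cross terms whose naive estimate calls for moments of $L_A$ of order at least $\alpha$. In fact no such machinery is needed. In the paper the only genuine martingale is the compensated \emph{small}-jump integral of the increments $U(X+x\beta_ke_k)-U(X)$; since these increments are bounded by $\|DU\|\,|x|\beta_k\le C(1+\|X\|^{p-1})|x|\beta_k$, the ordinary $L^2$ Burkholder--Davis--Gundy inequality (Ichikawa's version) applies because $\int_{|x|\le1}|x|^2\nu(dx)<\infty$: the infinite $\alpha$-moment of $L$ is irrelevant here, as one never integrates $\|X\|^2$ against the full noise, only $\|DU\|^2$ against the truncated L\'evy measure. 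The large jumps enter through an uncompensated Poisson integral of $|U(\cdot+x\beta_ke_k)-U(\cdot)|$, which is nondecreasing in $t$, so its supremum over $[0,T]$ is its value at $T$ and its expectation equals the compensator integral. Finally, note that the paper runs the whole It\^o computation on the Galerkin system \eref{A4.12} and passes to the limit by Theorem \ref{GA} and Fatou's lemma; you should either do the same or explain why It\^o's formula may be applied directly to the infinite-dimensional mild solution.
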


\begin{proof}
%

For $m\in\mathbb{N}_{\ast}$, put $H_m={\rm span}\{e_k, |k|\leq m\}$ and let $\pi_m$ be the projection from $H$ to $H_m$. Consider the Galerkin approximation of system \eref{main equation}:
\begin{equation}\left\{\begin{array}{l}\label{A4.12}
\displaystyle
dX^{m,\vare}_t=[AX^{m,\vare}_t+N^m(X^{m,\vare}_t)+f^m(X^{m,\vare}_t, Y^{m,\vare}_t)]dt+d\bar L^m_t,\quad X^{m,\vare}_0=x^{m}\in H_m\\
dY^{m,\vare}_t=\frac{1}{\vare}[AY^{m,\vare}_t+g^m(X^{m,\vare}_t, Y^{m,\vare}_t)]dt+\frac{1}{\vare^{1/\alpha}}d\bar Z^m_t,\quad Y^{m,\vare}_0=y^m\in H_m,\end{array}\right.
\end{equation}
where $X^{m,\vare}_0=\pi_{m}X^{\vare}_0$, $N^{m}(X^{m,\vare}_t)=\pi_m(N(X^{m,\vare}_t))$, $f^{m}(X^{m,\vare}_t,Y^{m,\vare}_t )=\pi_m(f(X^{m,\vare}_t, Y^{m,\vare}_t))$, $g^{m}(X^{m,\vare}_t,Y^{m,\vare}_t )=\pi_m(g(X^{m,\vare}_t, Y^{m,\vare}_t))$, $\bar L^{m}_t=\sum_{|k|\leq m}\beta_{k}L^{k}_{t}e_k$ and $\bar Z^{m}_t=\sum_{|k|\leq m}\gamma_{k}Z^{k}_{t}e_k$.

Now, define a smooth function $U$ on $H_m$ by
$$U(x)=(\|x\|^{2}+1)^{p/2}, \quad x\in H_m.$$
Then we have
$$
DU(x)=\frac{px}{(\|x\|^2+1)^{1-p/2}},\quad
D^2 U(x)=\frac{p I_m}{(\|x\|^2+1)^{1-p/2}}+\frac{p(p-2)x\otimes x}{(\|x\|^2+1)^{2-p/2}},
$$
where $I_m$ is the union matrix on $H_m$. As a result, for any $x,y\in H_m$,
\begin{eqnarray}
&&\|D U(x)\|\leq \frac{C_p\|x\|}{(\|x\|^2+1)^{1-p/2}}\leq C_p(\|x\|^2+1)^{p/2-1/2},\label{F3.1}\\
&&\|D^2 U(x)\|\leq \frac{C_p}{(\|x\|^2+1)^{1-p/2}}\leq C_p. \label{F3.2}
\end{eqnarray}
By It\^o's formula, we get
\begin{eqnarray}
&&U(X^{m,\vare}_t)+\int^{t}_{0}\frac{ p\|X^{m,\vare}_{s}\|^{2}_1}{(\|X^{m,\vare}_s\|^{2}+1)^{1-p/2}}ds\nonumber\\
&=&
U(x^m)+\int^{t}_{0}\left[\frac{ \langle N^m(X^{m,\vare}_{s}), p X^{m,\vare}_{s}\rangle}{(\|X^{m,\vare}_s\|^{2}+1)^{1-p/2}}+\frac{ \langle f^m(X^{m,\vare}_{s},Y^{m,\vare}_s ), p X^{m,\vare}_{s}\rangle}{(\|X^{m,\vare}_s\|^{2}+1)^{1-p/2}}\right]ds\nonumber\\
&&+\sum_{|k|\leq m}\int^{t}_{0}\int_{|x|\leq1}[U(X^{m,\vare}_{s-}+x\beta_{k}e_k)-U(X^{m,\vare}_{s-})]\widetilde{N}^{k}(ds,dx)\nonumber\\
&&+\sum_{|k|\leq m}\int^{t}_{0}\int_{|x|\leq1}\left[U(X^{m,\vare}_{s}+x\beta_{k}e_k)-U(X^{m,\vare}_{s})
- \frac{\langle p X^{m,\vare}_{s}, x\beta_k e_k\rangle}{(\|X^{m}_s\|^{2}+1)^{1-p/2}}\right]\nu(dx)ds\nonumber\\
&&+\sum_{|k|\leq m}\int^{t}_{0}\int_{|x|>1}[U(X^{m,\vare}_{s-}+x\beta_{k}e_k)-U(X^{m,\vare}_{s-})]N^{k}(ds,dx)\nonumber\\
:=\!\!\!\!\!\!\!\!&&\!\!U(x^m)+I_1^m(t)+I^m_2(t)+I_3^m(t)+I_4^m(t).\label{3.4}
\end{eqnarray}

For $I^{m}_1(t)$, by \eref{P5}, condition \ref{A3}, $p\in[1,\alpha)$ and $\|X^{m,\vare}_s\|^{2}+1\geq 1$, there exists $C_p>0$ such that
\begin{equation}\label{3.4.1}
\EE\left[\sup_{0\leq t\leq T}I^m_1(t)\right]
 \leq
    pT
      +
   C_p\int^{T}_0 \left(\EE\|X^{m,\vare}_s\|^{p}+1\right)ds.
\end{equation}

For $I^{m}_2(t)$, by the Burkholder-Davis-Gundy inequality (cf. \cite[Theorem 1]{57} ) and \eref{F3.1}, we have
\begin{eqnarray}
\mathbb{E}\left[\sup_{0\leq t\leq T}\|I^{m}_2(t)\|\right]
\leq\!\!\!\!\!\!\!\!&& C\sum_{|k|\leq m}\left(\EE\int^T_0\int_{|x|\leq 1} |U(X^{m,\vare}_s+x\beta_k e_k)-U(X^{m,\vare}_s)|^2 \nu(dx)ds\right)^{1/2}\nonumber\\
\leq\!\!\!\!\!\!\!\!&& C\sum_{|k|\leq m}\left(\EE\int^T_0\int_{|x|\leq 1}\int^1_0 \|DU(X^{m,\vare}_s+\xi x\beta_k e_k)\|^2d\xi \|x\beta_k e_k\|^2 \nu(dx)ds\right)^{1/2}\nonumber\\
\leq\!\!\!\!\!\!\!\!&& C\!\!\sum_{|k|\leq m}\left(\EE\int^T_0\int_{|x|\leq 1}\left(\|X^{m,\vare}_s\|^{2p-2}+\|x\beta_k e_k\|^{2p-2}+1\right) \|x\beta_k e_k\|^2 \nu(dx)ds\right)^{1/2}\nonumber\\
\leq\!\!\!\!\!\!\!\!&& C\!\!\sum_{k\in \mathbb{Z}_{\ast}}\beta_k \left[\int_{|x|\leq 1}|x|^{2 }\nu(dx)\right]^{1/2}\!\!\left[\EE\int^T_0\left(\|X^{m,\vare}_s\|^{2p-2}+1\right) ds\right]^{1/2}\nonumber\\
&&+C\sum_{k\in \mathbb{Z}_{\ast}}\beta^{p}_k \left[\int_{|x|\leq 1}|x|^{2p}\nu(dx)\right]^{1/2}\nonumber\\
 \leq
   \!\!\!\!\!\!\!\!&& C\EE\int^T_0\left(\|X^{m,\vare}_s\|^{2p-2}+1\right) ds
   +
   C\Big(
      \sum_{k\in \mathbb{Z}_{\ast}}\beta_k \left[\int_{|x|\leq 1}|x|^{2 }\nu(dx)\right]^{1/2}
   \Big)^2
      \nonumber\\
&&+C\sum_{k\in \mathbb{Z}_{\ast}}\beta^{p}_k \left[\int_{|x|\leq 1}|x|^{2p}\nu(dx)\right]^{1/2}\nonumber\\
\leq
   \!\!\!\!\!\!\!\!&& C\EE\int^T_0\left(\|X^{m,\vare}_s\|^{p}+2\right) ds
   +
   C\Big(
      \sum_{k\in \mathbb{Z}_{\ast}}\beta_k \left[\int_{|x|\leq 1}|x|^{2 }\nu(dx)\right]^{1/2}
   \Big)^2
      \nonumber\\
&&+C\sum_{k\in \mathbb{Z}_{\ast}}\beta^{p}_k \left[\int_{|x|\leq 1}|x|^{2p}\nu(dx)\right]^{1/2},\nonumber\\
\leq
   \!\!\!\!\!\!\!\!&& C\EE\int^T_0\left(\|X^{m,\vare}_s\|^{p}+2\right) ds
   +
   C_{p,T},
 \label{3.7.1}
\end{eqnarray}
where the last two inequalities have used condition \ref{A2} and $\|X^{m,\vare}_s\|^{2p-2}\leq \|X^{m,\vare}_s\|^{p}+1$.

For $I^{m}_3(t)$, the Taylor's expansion and (\ref{F3.2}) imply
\begin{eqnarray}
\mathbb{E}\left[\sup_{0\leq t\leq T}\|I^{m}_3(t)\|\right]\leq C_p\sum_{|k|\leq m}\beta^2_k\int^T_0\int_{|x|\leq 1} |x|^2 \nu(dx)ds\leq C_{p,T}.\label{3.7.2}
\end{eqnarray}

For  $I^{m}_4(t)$, by \eref{F3.1} again, we obtain
\begin{eqnarray}
\mathbb{E}\left[\sup_{0\leq t\leq T}\|I^{m}_4(t)\|\right]\leq\!\!\!\!\!\!\!\!&& C\sum_{|k|\leq m}\EE\left(\int^T_0\int_{|x|> 1} |U(X^{m,\vare}_s+x\beta_k e_k)-U(X^{m,\vare}_s)| N^{k}(ds, dx)\right)\nonumber\\
=\!\!\!\!\!\!\!\!&& C\sum_{|k|\leq m}\EE\left(\int^T_0\int_{|x|>1} |U(X^{m,\vare}_s+x\beta_k e_k)-U(X^{m,\vare}_s)| \nu(dx)ds\right)\nonumber\\
\leq\!\!\!\!\!\!\!\!&& C\sum_{|k|\leq m}\EE\left(\int^T_0\int_{|x|>1} \int^1_0\|DU(X^{m,\vare}_s+\xi x\beta_k e_k)\|d\xi \|x\beta_k e_k\| \nu(dx)ds\right)\nonumber\\
\leq\!\!\!\!\!\!\!\!&& C\sum_{|k|\leq m}\EE\left(\int^T_0\int_{|x|>1}\left(\|X^{m,\vare}_s\|^{p-1}+\|x\beta_k e_k\|^{p-1}+1\right) \|x\beta_k e_k\|  \nu(dx)ds\right)\nonumber\\
\leq\!\!\!\!\!\!\!\!&& C_p\sum_{k\in \mathbb{Z}_{\ast}}\beta_k \int_{|x|>1}|x|\nu(dx)\EE\int^T_0\left(\|X^{m,\vare}_t\|^p+1\right)dt+C_T\sum_{k\in \mathbb{Z}_{\ast}}\beta^{p}_k \int_{|x|>1}|x|^{p}\nu(dx)\nonumber\\
 \leq\!\!\!\!\!\!\!\!&& C_{p,T}\int^T_0\EE\|X^{m,\vare}_t\|^p dt+C_{p,T}.\label{3.7.3}
\end{eqnarray}
Combining estimates \eref{3.4}-\eref{3.7.3}, we have
\begin{eqnarray*}
\mathbb{E}\left[\sup_{0\leq t\leq T}\|X^{m,\vare}_t\|^{p}\right]
+\mathbb{E}\int^{T}_{0}\frac{\|X^{m,\vare}_{t}\|^{2}_1}{(\|X^{m,\vare}_t\|^{2}+1)^{1-p/2}}dt\leq\!\!\!\!\!\!\!\!&&C_{p,T}\int^T_0\EE\|X^{m,\vare}_t\|^p dt+C_{p,T}(1+ \|x\|^{p}).
\end{eqnarray*}
Then the Gronwall's inequality implies
\begin{eqnarray}
\mathbb{E}\left[\sup_{0\leq t\leq T}\|X^{m,\vare}_t\|^{p}\right]
+\mathbb{E}\int^{T}_{0}\frac{\|X^{m,\vare}_{t}\|^{2}_1}{(\|X^{m,\vare}_t\|^{2}+1)^{1-p/2}}dt\leq\!\!\!\!\!\!\!\!&&C_{p,T}(1+ \|x\|^{p}).\label{3.9}
\end{eqnarray}
By Theorem \ref{GA} in the appendix below, for any $t>0$, when $\mathbb{W}=H$ or $V$,
$$\lim_{m\rightarrow\infty}\|X^{m,\vare}_t\|_{\mathbb{W}}=\|X^{\vare}_t\|_{\mathbb{W}},\quad \mathbb{P}-a.s..$$
Hence by Fatou's Lemma and \eref{3.9},
\begin{eqnarray}
\mathbb{E}\left(\sup_{0\leq t\leq T}\|X_{t}^{\varepsilon} \|^{p}\right)+\EE\int^T_0 \frac{\|X_{t}^{\varepsilon} \|^2_1}{(\|X_{t}^{\varepsilon} \|^2+1)^{1-p/2}}dt\label{3.7}
\leq  C_{p,T}(1+ \|x\|^{p}).
\end{eqnarray}

Note that
$$
Y^{\varepsilon}_t=e^{tA/\varepsilon}y+\frac{1}{\varepsilon}\int^t_0e^{(t-s)A/\varepsilon}g(X^{\varepsilon}_s,Y^{\varepsilon}_s)ds
+\frac{1}{\vare^{1/\alpha}}\int^t_0 e^{(t-s)A/\varepsilon}dZ_s.
$$
Then by property \eref{P1/2}, we have for any $t\geq0$,
\begin{eqnarray*}
\|Y^{\varepsilon}_t\|\leq\|y\|+\frac{1}{\varepsilon}\int^t_0e^{-\lambda_1(t-s)/\varepsilon}(C+C\|X^{\varepsilon}_s\|+L_{g}\|Y^{\varepsilon}_s\|)ds
+\left\|\frac{1}{\vare^{1/\alpha}}\int^t_0 e^{(t-s)A/\varepsilon}dZ_s\right\|.
\end{eqnarray*}
Define $\tilde Z_t:=\frac{1}{\vare^{1/ \alpha}}Z_{t\vare}$, which is also a cylindrical $\alpha$-stable process. Then by \cite[(4.12)]{PZ},
\begin{eqnarray*}
\EE\left\|\frac{1}{\vare^{1/\alpha}}\int^t_0 e^{(t-s)A/\varepsilon}dZ_s\right\|=\!\!\!\!\!\!\!\!&&\EE\left\|\int^{t/\vare}_0 e^{(t/\vare-s)A}d\tilde Z_s\right\|\\
\leq\!\!\!\!\!\!\!\!&&C\left(\sum_{k}\gamma^{\alpha}_k \frac{1-e^{-\alpha \lambda_k t/\vare}}{\alpha \lambda_k} \right)^{1/\alpha}\\
\leq\!\!\!\!\!\!\!\!&&C\left(\sum_{k}\frac{\gamma^{\alpha}_k }{\alpha \lambda_k} \right)^{1/\alpha}.
\end{eqnarray*}
Combining this and \eref{3.7}, we have for any $t\leq T$,
\begin{eqnarray*}
\EE\|Y^{\varepsilon}_t\|\leq\!\!\!\!\!\!\!\!&&\|y\|+\frac{C}{\varepsilon}\int^t_0e^{-\lambda_1(t-s)/\varepsilon}ds+\frac{C}{\varepsilon}\int^t_0e^{-\lambda_1(t-s)/\varepsilon}\EE\|X^{\varepsilon}_s\|ds\\
\!\!\!\!\!\!\!\!&&+\frac{L_g}{\varepsilon}\int^t_0e^{-\lambda_1(t-s)/\varepsilon}L_g\EE\|Y^{\varepsilon}_s\|ds
+\frac{1}{\vare^{1/\alpha}}\EE\left\|\int^t_0 e^{(t-s)A/\varepsilon}dZ_s\right\|\\
\leq\!\!\!\!\!\!\!\!&& C_T(1+\|x\|+\|y\|)+\frac{L_{g}}{\lambda_1}\sup_{0\leq t\leq T}\EE\|Y^{\varepsilon}_t\|.
\end{eqnarray*}
Hence \eref{Control Y} holds due to $L_g<\lambda_1$ by condition \ref{A4}. The proof is complete.
\end{proof}

In order to study the high regularity of the slow component $X^{\vare}_t$, we need to construct the following stopping time, i.e., for any $\vare\in(0,1)$, $R>0$,
\begin{eqnarray}
\tau^{\varepsilon}_R:=\inf\{t>0, \|X^{\vare}_t\|\geq R\}.\label{ST}
\end{eqnarray}

\begin{lemma} \label{SOX}
For any $x\in H^{\theta}$ with $\theta\in (1/2,1]$, $y\in H$, $T>0$, $1\leq p<\alpha$, $R>0$ and $\vare\in(0,1)$, there exists a constant $C_{p,T}>0$ such that
\begin{align}
\mathbb{E}\big(\sup_{0\leq t\leq T\wedge \tau^{\varepsilon}_R}\|X_{t}^{\varepsilon}\|_{\theta}^{p}\big)
\leq C_{p,T}e^{C_{p,T}R^6}(\|x\|^p_{\theta}+1).\label{4.5}
\end{align}
\end{lemma}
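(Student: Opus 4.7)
The plan is to exploit the mild formulation of $X^\vare$ together with the smoothing effect of $\{e^{tA}\}$ and property \eref{P2} for the cubic nonlinearity, and then close by a Henry-type singular Gronwall argument valid on $[0,T\wedge\tau^\vare_R]$, where the pointwise bound $\|X^\vare_s\|\le R$ is at our disposal. From the first line of \eref{A mild solution} and property \eref{P1}, for any $\sigma\in[0,1/2)$,
\begin{align*}
\|X^\vare_t\|_\theta \le \|x\|_\theta + \int_0^t (t-s)^{-(\theta+\sigma)/2}\|N(X^\vare_s)\|_{-\sigma}\,ds + C\int_0^t (t-s)^{-\theta/2}\|f\|\,ds + \|L_A(t)\|_\theta,
\end{align*}
and the $f$-integral is bounded by $C_T$ thanks to condition \ref{A3}.

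The key step is to fix $\sigma\in(1-\theta,1/2)$, a window that is nonempty precisely because $\theta>1/2$. Property \eref{P2} together with interpolation between $H$ and $H^\theta$, combined with $\|X^\vare_s\|\le R$ for $s\le \tau^\vare_R$, yields
\begin{align*}
\|N(X^\vare_s)\|_{-\sigma} \le C\bigl(1 + R^{3-\mu}\|X^\vare_s\|_\theta^{\mu}\bigr),\qquad \mu:=\frac{1-\sigma}{\theta}\in(0,1).
\end{align*}
Since $\mu<1$ one has $\|X^\vare_s\|_\theta^\mu\le 1+\|X^\vare_s\|_\theta$, and since $(\theta+\sigma)/2<1$ the time kernel stays integrable. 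Substituting back gives, for $t\le T\wedge\tau^\vare_R$,
\begin{align*}
\|X^\vare_t\|_\theta \le \|x\|_\theta + C_T(1+R^{3-\mu}) + \|L_A(t)\|_\theta + C_T R^{3-\mu}\int_0^t (t-s)^{-(\theta+\sigma)/2}\|X^\vare_s\|_\theta\,ds,
\end{align*}
so the singular Gronwall inequality produces the pathwise bound
\begin{align*}
\sup_{0\le t\le T\wedge\tau^\vare_R}\|X^\vare_t\|_\theta \le C_T\bigl(\|x\|_\theta+1+\sup_{0\le t\le T}\|L_A(t)\|_\theta\bigr)\exp(C_T R^{\kappa})
\end{align*}
for some explicit $\kappa=\kappa(\theta,\sigma)$; a direct computation gives $\kappa\le 6$ in the allowed range. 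Raising to the $p$-th power, taking expectation, and applying Remark \ref{Re2} with index $\theta$ (admissible since $\theta\le 1<2\beta-1/\alpha$ under \ref{A2}) to control $\EE\sup_{0\le t\le T}\|L_A(t)\|_\theta^p$, delivers \eref{4.5}.

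The main obstacle is the simultaneous bookkeeping on $\sigma$: it must lie strictly below $1/2$ (so \eref{P2} applies), strictly above $1-\theta$ (so $\mu<1$ and the Gronwall argument stays linear in $\|X^\vare_s\|_\theta$), and must satisfy $(\theta+\sigma)/2<1$ (integrability of the kernel). The non-emptiness of this window is exactly what the hypothesis $\theta>1/2$ buys; the limiting case $\theta=1/2$ would be genuinely critical for the present mild-formulation strategy.
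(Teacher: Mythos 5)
Your proposal is correct and follows the same skeleton as the paper's proof: both start from the mild formulation \eref{A mild solution}, estimate the nonlinear convolution via \eref{P1} and \eref{P2}, interpolate between $H$ and $H^{\theta}$ so that the factor $\|X^{\vare}_s\|$ can be replaced by $R$ before the stopping time, bound the stochastic convolution through Remark \ref{Re2}, and close with a Gronwall-type argument. The difference lies entirely in how the Gronwall step is closed. The paper pins $\sigma=1-\theta$ exactly, so the integrand is \emph{linear} in $\|X^{\vare}_s\|_{\theta}$ with the singular kernel $(t-s)^{-1/2}$; it then removes the singularity by H\"older's inequality with exponents $(3/2,3)$, cubes the resulting estimate, and applies the classical Gronwall lemma to $\sup_{s\le t}\|X^{\vare}_s\|_{\theta}^{3}$ --- this cubing is precisely where $R^{2}$ becomes $R^{6}$. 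You instead take $\sigma$ strictly inside $(1-\theta,1/2)$ so that the power $\mu=(1-\sigma)/\theta$ of $\|X^{\vare}_s\|_{\theta}$ is sublinear, and invoke Henry's singular Gronwall inequality directly; this is perfectly legitimate and in fact yields a slightly better exponent. One imprecision to fix: your claim that $\kappa\le 6$ holds ``in the allowed range'' is false as stated --- with $\kappa=\frac{2(3\theta-1+\sigma)}{\theta(2-\theta-\sigma)}$ one gets, e.g., $\kappa\approx 9$ for $\theta=1$ and $\sigma$ near $1/2$ --- but $\kappa\downarrow 4$ as $\sigma\downarrow 1-\theta$, so it suffices to fix $\sigma$ close enough to $1-\theta$ and then use $e^{CR^{\kappa}}\le C e^{CR^{6}}$. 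With that choice made explicit, your argument delivers \eref{4.5}.
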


\begin{proof}
Recall that
\begin{align*}
X^{\varepsilon}_t=e^{tA}x+\int^t_0e^{(t-s)A}N(X^{\varepsilon}_s)ds+\int^t_0e^{(t-s)A}f(X^{\varepsilon}_s, Y^{\varepsilon}_s)ds+\int^t_0 e^{(t-s)A}dL_s.
\end{align*}

According to properties \eref{P1} and \eref{P2}, for any $\theta\in (1/2, 1]$, we have
\begin{eqnarray*}
\|X^{\varepsilon}_t\|_{\theta}\leq\!\!\!\!\!\!\!\!&&\|e^{tA}x\|_{\theta}+\left\|\int^t_0e^{(t-s)A}N(X^{\varepsilon}_s)ds\right\|_{\theta}+\left\|\int^t_0e^{(t-s)A}f(X^{\varepsilon}_s, Y^{\varepsilon}_s)ds\right\|_{\theta}+\left\|\int^t_0 e^{(t-s)A}dL_s\right\|_{\theta}\\
\leq \!\!\!\!\!\!\!\!&&\|x\|_{\theta}+\int^t_0(t-s)^{-1/2}\|N(X^{\varepsilon}_s)\|_{-(1-\theta)}ds+C\int^t_0 (t-s)^{-\theta/2}ds+\|L_{A}(t)\|_{\theta} \nonumber\\
\leq \!\!\!\!\!\!\!\!&& \|x\|_{\theta}+\|L_{A}(t)\|_{\theta}+C_T+C\int^t_0(t-s)^{-1/2}(1+\|X^{\varepsilon}_s\|^3_{\theta/3})ds.
\end{eqnarray*}
Using the interpolation inequality,
$$
\|X^{\varepsilon}_s\|_{\theta/3}
\leq C \| X^{\varepsilon}_s \|^{2/3}\|X^{\varepsilon}_s\|_{\theta}^{1/3}.
$$
Then for any $t\leq T\wedge\tau^{\varepsilon}_R$,
\begin{eqnarray*}
\|X^{\varepsilon}_t\|_{\theta}\leq\!\!\!\!\!\!\!\!&&\|x\|_{\theta}+\|L_{A}(t)\|_{\theta}+C_T+\int^t_0(t-s)^{-1/2}\|X^{\varepsilon}_s\|^2 \|X^{\varepsilon}_s\|_{\theta}ds \nonumber\\
\leq \!\!\!\!\!\!\!\!&&\|x\|_{\theta}+\sup_{0\leq t\leq T}\|L_{A}(t)\|_{\theta}+C_T+\sup_{t< T\wedge\tau^{\varepsilon}_R}\|X^{\varepsilon}_t\|^2\int^t_0(t-s)^{-1/2} \|X^{\varepsilon}_s\|_{\theta}ds,\nonumber\\
\leq \!\!\!\!\!\!\!\!&&\|x\|_{\theta}+\sup_{0\leq t\leq T}\|L_{A}(t)\|_{\theta}+C_T+R^2\int^t_0(t-s)^{-1/2} \|X^{\varepsilon}_s\|_{\theta}ds\nonumber\\
\leq \!\!\!\!\!\!\!\!&&\|x\|_{\theta}+\sup_{0\leq t\leq T}\|L_{A}(t)\|_{\theta}+C_T+R^2\left[\int^t_0(t-s)^{-3/4}ds\right]^{2/3}\left[\int^t_0\|X^{\varepsilon}_s\|^3_{\theta}ds\right]^{1/3},
\end{eqnarray*}
which implies
\begin{eqnarray*}
\sup_{0\leq t\leq T\wedge \tau^{\varepsilon}_R}\|X^{\varepsilon}_t\|^3_{\theta}
\leq \!\!\!\!\!\!\!\!&&C_T\left(\|x\|_{\theta}+\sup_{0\leq t\leq T}\|L_{A}(t)\|_{\theta}+1\right)^3+C_{T}R^6\int^{T\wedge \tau^{\varepsilon}_R}_0\|X^{\varepsilon}_s\|^3_{\theta}ds.
\end{eqnarray*}
Then the Gronwall's inequality yields
\begin{eqnarray*}
\sup_{0\leq t\leq T\wedge \tau^{\varepsilon}_R}\|X^{\varepsilon}_{t}\|_{\theta}\leq \!\!\!\!\!\!\!\!&&C_T e^{C_T R^6}(\|x\|_{\theta}+\sup_{0\leq t\leq T}\|L_{A}(t)\|_{\theta}+1).
\end{eqnarray*}
Hence, by Remark \ref{Re2}, we have for any $1\leq p<\alpha$,
\begin{align} \label{HolderNorm B}
\mathbb{E}\left(\sup_{0\leq t\leq T\wedge \tau^{\varepsilon}_R}\|X^{\varepsilon}_{t}\|^p_{\theta}\right)
\leq C_{p,T}e^{C_{p,T}R^6}(\|x\|^p_{\theta}+1).
\end{align}
The proof is complete.
\end{proof}

\vspace{0.3cm}
Because that we will use the approach based on time discretization later, we first give an estimate of $X_{t+h}^{\varepsilon}-X_{t}^{\varepsilon}$ when $0\leq t\leq t+h\leq T\wedge\tau^{\vare}_R$.
\begin{lemma} \label{COX}
For any $x\in H^{\theta}$ with $\theta\in (1/2,1]$, $y\in H$, $T>0$, $1\leq p<\alpha$ and $R>0$,
there exists a constant $C_{p,T}>0$ such that for all $\vare\in(0,1)$,
\begin{align*}
\mathbb{E}\left[ \| X_{t+h}^{\varepsilon}-X_{t}^{\varepsilon} \|^{p}1_{\{0\leq t\leq t+h\leq T\wedge\tau^{\varepsilon}_{R} \}}\right]
\leq C_{p,T}e^{C_{p,T}R^6}(\|x\|^p_{\theta}+1)h^{\frac{p\theta}{2}}.
\end{align*}
\end{lemma}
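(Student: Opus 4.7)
The plan is to split $X^{\varepsilon}_{t+h}-X^{\varepsilon}_{t}$ into the four standard mild-formula pieces
$$X^{\varepsilon}_{t+h}-X^{\varepsilon}_{t}=(e^{hA}-I)X^{\varepsilon}_{t}+\int_{t}^{t+h}e^{(t+h-s)A}N(X^{\varepsilon}_{s})\,ds+\int_{t}^{t+h}e^{(t+h-s)A}f(X^{\varepsilon}_{s},Y^{\varepsilon}_{s})\,ds+\int_{t}^{t+h}e^{(t+h-s)A}\,dL_{s}$$
and bound each in $L^p$ on the event $\{t+h\leq T\wedge\tau^{\varepsilon}_{R}\}$. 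The linear piece is immediate from \eref{P0}: $\|(e^{hA}-I)X^{\varepsilon}_{t}\|\leq Ch^{\theta/2}\|X^{\varepsilon}_{t}\|_{\theta}$, whose $p$-th moment is controlled by Lemma \ref{SOX}. The drift piece is trivial by \ref{A3}, giving a pathwise bound $Ch$, so $p$-th moment $Ch^{p}\leq C_T h^{p\theta/2}$. For the stochastic piece, the independent and stationary increments of $L$ identify $\int_{t}^{t+h}e^{(t+h-s)A}\,dL_{s}$ in law with $L_{A}(h)$, so by Remark \ref{Re2}, $\mathbb{E}\|L_{A}(h)\|^{p}\leq Ch^{p/\alpha}$; since $\alpha<2$ and $\theta\leq1$ force $p/\alpha\geq p\theta/2$, this is absorbed into the target $h^{p\theta/2}$, with an additional factor $T^{p(1/\alpha-\theta/2)}$ in the case $h\geq 1$.

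The delicate step, and the main obstacle, is the cubic term. A direct bound $\|N(X^{\varepsilon}_{s})\|\leq C(1+\|X^{\varepsilon}_{s}\|_{\theta}^{3})$ would force us to control $\mathbb{E}\sup_{s\leq T\wedge\tau^{\varepsilon}_{R}}\|X^{\varepsilon}_{s}\|_{\theta}^{3p}$, but the $\alpha$-stable driver only provides finite moments of order strictly less than $\alpha$, while $p$ may be as large as $\alpha^{-}$; the exponent $3p$ would then cross the integrability threshold and Lemma \ref{SOX} would no longer apply. To circumvent this I would measure $N$ in a slightly negative norm, applying \eref{P2} at the fractional index $\sigma:=1-\theta\in[0,1/2)$, which yields $\|N(X^{\varepsilon}_{s})\|_{-(1-\theta)}\leq C(1+\|X^{\varepsilon}_{s}\|_{\theta/3}^{3})$. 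The standard interpolation $\|X^{\varepsilon}_{s}\|_{\theta/3}\leq C\|X^{\varepsilon}_{s}\|^{2/3}\|X^{\varepsilon}_{s}\|_{\theta}^{1/3}$ between $H^{0}$ and $H^{\theta}$, combined with the a priori bound $\|X^{\varepsilon}_{s}\|\leq R$ on the stopping set, collapses this to
$$\|N(X^{\varepsilon}_{s})\|_{-(1-\theta)}\leq C\bigl(1+R^{2}\|X^{\varepsilon}_{s}\|_{\theta}\bigr),$$
now \emph{linear} in the highest-order norm. Using \eref{P1} with exponents $\sigma_{1}=-(1-\theta)$, $\sigma_{2}=0$ then gives
$$\Big\|\int_{t}^{t+h}e^{(t+h-s)A}N(X^{\varepsilon}_{s})\,ds\Big\|\leq C\int_{t}^{t+h}(t+h-s)^{-(1-\theta)/2}\bigl(1+R^{2}\|X^{\varepsilon}_{s}\|_{\theta}\bigr)\,ds\leq Ch^{(1+\theta)/2}\Bigl(1+R^{2}\!\!\sup_{s\leq T\wedge\tau^{\varepsilon}_{R}}\!\!\|X^{\varepsilon}_{s}\|_{\theta}\Bigr),$$
and taking $p$-th moments while invoking Lemma \ref{SOX} at the admissible exponent $p<\alpha$ delivers a bound of the form $C_{p,T}R^{2p}e^{C_{p,T}R^{6}}(1+\|x\|_{\theta}^{p})h^{p(1+\theta)/2}$.

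Finally, I would sum the four contributions, use $h\leq T$ to turn $h^{p(1+\theta)/2}$ and $h^{p/\alpha}$ into $T$-dependent multiples of the target $h^{p\theta/2}$, and absorb the polynomial $R^{2p}$ factor into the exponential $e^{C_{p,T}R^{6}}$. The remaining algebra is routine; the crux of the argument is the negative-norm measurement of $N$ at the index $\sigma=1-\theta$, which is what keeps the moment requirement strictly below the $\alpha$-stable integrability threshold.
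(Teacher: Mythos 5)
Your proposal is correct and follows essentially the same route as the paper: the same four-term mild-solution decomposition, the same use of \eref{P0} with Lemma \ref{SOX} for the semigroup increment, the boundedness of $f$ for the drift term, Remark \ref{Re2} for the stochastic convolution, and—crucially—the same treatment of the cubic term via the negative norm $\|N(X^{\varepsilon}_s)\|_{-(1-\theta)}$ from \eref{P2} combined with the interpolation $\|X^{\varepsilon}_s\|_{\theta/3}\leq C\|X^{\varepsilon}_s\|^{2/3}\|X^{\varepsilon}_s\|_{\theta}^{1/3}$ and the bound $\|X^{\varepsilon}_s\|\leq R$ before the stopping time. The surrounding discussion of why a direct cubic bound in $\|\cdot\|_{\theta}$ would exceed the $\alpha$-stable moment threshold is sound motivation but not needed for the argument itself.
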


\begin{proof}
After simple calculations, it is easy to see
\begin{eqnarray}
X_{t+h}^{\varepsilon}-X_{t}^{\varepsilon}=\!\!\!\!\!\!\!\!&&(e^{Ah}-I)X_{t}^{\varepsilon}+\int_{t}^{t+h}e^{(t+h-s)A}N(X^{\varepsilon}_s)ds \nonumber\\
\!\!\!\!\!\!\!\!&&+\int_{t}^{t+h}e^{(t+h-s)A}f(X^{\varepsilon}_s, Y^{\varepsilon}_s)ds+\int_{t}^{t+h}e^{(t+h-s)A}dL_s  \nonumber\\
:=\!\!\!\!\!\!\!\!&&I_{1}+I_{2}+I_{3}+I_{4}.  \nonumber
\end{eqnarray}

For $I_1$, by \eref{P0} and Lemma \ref{SOX}, for any $1\leq p<\alpha$ we have
\begin{eqnarray}  \label{REGX1}
\mathbb{E} \left[\|I_{1} \|^p 1_{\{0\leq t\leq t+h\leq T\wedge \tau^{\varepsilon}_R\}}\right] \leq\!\!\!\!\!\!\!\!&&
Ch^{\frac{p\theta}{2}}\mathbb{E}\left[\|X^{\varepsilon}_t\|_{\theta}^{p}1_{\{0\leq t\leq T\wedge \tau^{\varepsilon}_R\}}\right]\nonumber\\
\leq\!\!\!\!\!\!\!\!&&C_{p,T}e^{C_{p,T}R^6}(\|x\|^p_{\theta}+1)h^{\frac{p\theta}{2}}.
\end{eqnarray}

For $I_{2}$, by \eref{P2} and interpolation inequality, we get
\begin{eqnarray*}
\|I_{2} \|1_{\{0\leq t\leq t+h\leq\tau^{\varepsilon}_{R}\wedge T\}}\leq\!\!\!\!\!\!\!\!&&C\left[\int_{t}^{t+h} (t+h-s)^{-\frac{1-\theta}{2}}\|N(X^{\varepsilon}_s)\|_{-(1-\theta)}ds\right]1_{\{0\leq t\leq t+h\leq T\wedge \tau^{\varepsilon}_R\}} \nonumber\\
\leq\!\!\!\!\!\!\!\!&&C\left[\int_{t}^{t+h} (t+h-s)^{-\frac{1-\theta}{2}}(1+\|X^{\varepsilon}_s\|^2\|X^{\varepsilon}_s\|_{\theta})ds\right]1_{\{0\leq t\leq t+h\leq T\wedge \tau^{\varepsilon}_R\}} \nonumber\\
\leq\!\!\!\!\!\!\!\!&&Ch^{\frac{1+\theta}{2}}+R^2\sup_{0\leq t\leq T\wedge \tau^{\varepsilon}_R}\|X^{\varepsilon}_t\|_{\theta} h^{\frac{1+\theta}{2}}.
\end{eqnarray*}
Then by Lemma \ref{SOX}, we have
\begin{eqnarray}\label{REGX2}
\EE\left[\|I_{2} \|^p 1_{\{0\leq t\leq t+h\leq T\wedge \tau^{\varepsilon}_R\}}\right]
\leq \!\!\!\!\!\!\!\!&& C_{p,T}R^2 e^{C_{p,T}R^6}(\|x\|^p_{\theta}+1)h^{\frac{p\theta}{2}}\nonumber\\
\leq \!\!\!\!\!\!\!\!&& C_{p,T}e^{C_{p,T}R^6}(\|x\|^p_{\theta}+1)h^{\frac{p\theta}{2}}.
\end{eqnarray}

For $I_{3}$, by condition \ref{A2}, we obtain
\begin{eqnarray}  \label{REGX3}
\mathbb{E} \|I_{3} \|^{p}\leq\!\!\!\!\!\!\!\!&& Ch^{p}.
\end{eqnarray}

For $I_{4}$, Remark \ref{Re2} implies
\begin{eqnarray}\label{REGX4}
\EE \|I_4 \|^{p}\leq\!\!\!\!\!\!\!\!&&C_{p}h^{\frac{p}{\alpha}}.
\end{eqnarray}

Putting \eref{REGX1}-\eref{REGX4} together, which complete the proof.
\end{proof}



\subsection{ Estimates of the auxiliary process
\texorpdfstring{ $(\hat{X}_{t}^{\varepsilon},\hat{Y}_{t}^{\varepsilon})$} {Lg} }

Following the idea inspired by Khasminskii \cite{K1},
we introduce an auxiliary process
$(\hat{X}_{t}^{\varepsilon},\hat{Y}_{t}^{\varepsilon})\in H \times H$.
Specifically, we split the interval $[0,T]$ into some subintervals of size $\delta>0$, where $\delta$ is a positive number depends on $\vare$ and will be chosen later.
With the initial value $\hat{Y}_{0}^{\varepsilon}=Y^{\varepsilon}_{0}=y$,
we construct the process $\hat{Y}_{t}^{\varepsilon}$ as follows:
$$
d\hat{Y}_{t}^{\varepsilon}=\frac{1}{\varepsilon}\left[A\hat{Y}_{t}^{\varepsilon}+g(X_{t(\delta)}^{\varepsilon},\hat{Y}_{t}^{\varepsilon})\right]dt+\frac{1}{\varepsilon^{1/\alpha}}dZ_t,\quad \hat{Y}_{0}^{\varepsilon}=y,
$$
which satisfies
\begin{align} \label{AuxiliaryPro Y 01}
\hat{Y}_{t}^{\varepsilon}=e^{tA/\vare}y+\frac{1}{\varepsilon}\int_{0}^{t}e^{(t-s)A/\vare}g(X_{s(\delta)}^{\varepsilon},\hat{Y}_{s}^{\varepsilon})ds+\frac{1}{\varepsilon^{1/\alpha}}\int_{0}^{t}e^{(t-s)A/\vare}dZ_s,
\end{align}
where $t(\delta)=[\frac{t}{\delta}]\delta$ is the nearest breakpoint proceeding $t$. Then we construct the process $\hat{X}_{t}^{\varepsilon}$
as follows:
$$
d\hat{X}_{t}^{\varepsilon}=\left[A\hat{X}_{t}^{\varepsilon}+N(\hat X_{t}^{\varepsilon})+f(X_{t(\delta)}^{\varepsilon},\hat{Y}_{t}^{\varepsilon})\right]dt+dL_t,\quad \hat{X}_{0}^{\varepsilon}=x,
$$
which satisfies
\begin{align} \label{AuxiliaryPro X 01}
\hat X^{\varepsilon}_t=e^{tA}x+\int^t_0e^{(t-s)A}N(\hat X^{\varepsilon}_s)ds+\int^t_0e^{(t-s)A}f(X^{\varepsilon}_{s(\delta)}, \hat Y^{\varepsilon}_s)ds+\int^t_0 e^{(t-s)A}dL_s.
\end{align}

The following Lemma gives a control of the auxiliary process $(\hat{X}_{t}^{\varepsilon},\hat{Y}_{t}^{\varepsilon})$.
Since the proof almost follows the same steps in the proof of
Lemma \ref{PMY}, we omit the proof here.
\begin{lemma} \label{MDY}
For any $x,y\in H$, $T>0$ and $1\leq p<\alpha$, there exists a constant $C_{p,T}>0$ such that
\begin{align} \label{hatXHolderalpha}
\mathbb{E}\left(\sup_{0\leq t\leq T}\|\hat X_{t}^{\varepsilon} \|^p\right)+\EE\int^T_0 \frac{\|\hat X_{t}^{\varepsilon} \|^2_1}{(\|\hat X_{t}^{\varepsilon} \|^2+1)^{1-p/2}}dt\leq  C_{p,T}(1+ \|x\|^p),
\end{align}
$$
\sup_{0\leq t\leq T}
\mathbb{E} \| \hat{Y}_{t}^{\vare} \|\leq C_{T}(1+ \|x \|+ \|y\|).
$$
\end{lemma}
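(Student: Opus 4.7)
The plan is to mirror the proof of Lemma~\ref{PMY} almost verbatim, since the structural differences between the original system \eref{main equation} and the auxiliary system \eref{AuxiliaryPro X 01}--\eref{AuxiliaryPro Y 01} are mild: the only change is that the inputs to $f$ and $g$ are frozen at the breakpoint $t(\delta)$ for the slow component and replaced by $\hat Y^\varepsilon$ for the fast component. In particular, the nonlinear term $N(\hat X^\varepsilon)$ retains the dissipativity \eref{P5} and the driving noises $L$ and $Z$ are unchanged, so every estimate on jump terms that used Condition~\ref{A2} in Lemma~\ref{PMY} transfers without modification.

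For the estimate on $\hat X^\varepsilon$, I would introduce the same Galerkin truncation $\hat X^{m,\varepsilon}$ of \eref{AuxiliaryPro X 01}, apply It\^o's formula to $U(x)=(\|x\|^2+1)^{p/2}$, and obtain the decomposition analogous to \eref{3.4}. The drift contribution $I^m_1(t)$ is controlled exactly as in \eref{3.4.1}: the $N$-term is absorbed via \eref{P5}, yielding $pT$, and the $f$-term is bounded by $C_p\int_0^T(\EE\|\hat X^{m,\varepsilon}_s\|^p+1)\,ds$ using only the uniform bound of $f$ in Condition~\ref{A3}---crucially, no information on the (frozen) input $X_{s(\delta)}^\varepsilon$ or $\hat Y_s^\varepsilon$ is needed. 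The jump pieces $I^m_2, I^m_3, I^m_4$ are identical to those in Lemma~\ref{PMY} and controlled by the same computations using \eref{F3.1}, \eref{F3.2}, the Burkholder--Davis--Gundy inequality, and Condition~\ref{A2}. Gronwall's inequality then yields the $m$-uniform bound, and Fatou's lemma combined with the Galerkin convergence (Theorem~\ref{GA}) gives \eref{hatXHolderalpha}.

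For the estimate on $\hat Y^\varepsilon$, starting from the mild formulation \eref{AuxiliaryPro Y 01}, I would apply \eref{P1/2} to the semigroup, use the Lipschitz growth $\|g(x,y)\|\leq C+C\|x\|+L_g\|y\|$ implied by Condition~\ref{A1} (together with $\|g(0,0)\|<\infty$), and invoke the time-rescaled stable convolution bound from Remark~\ref{Re2} in the same way as at the end of the proof of Lemma~\ref{PMY}. This produces
\begin{align*}
\EE\|\hat Y^\varepsilon_t\|
\leq \|y\|+\tfrac{C}{\vare}\!\int_0^t e^{-\lambda_1(t-s)/\vare}\big(1+\EE\|X^\varepsilon_{s(\delta)}\|\big)ds
+\tfrac{L_g}{\vare}\!\int_0^t e^{-\lambda_1(t-s)/\vare}\EE\|\hat Y^\varepsilon_s\|\,ds+C.
\end{align*}
Since $s(\delta)\leq s\leq T$, the a priori bound \eref{Control X} in Lemma~\ref{PMY} (with $p=1$) yields $\EE\|X^\varepsilon_{s(\delta)}\|\leq C_T(1+\|x\|)$; taking suprema over $t\in[0,T]$ and using the dissipativity $L_g/\lambda_1<1$ from Condition~\ref{A4} to absorb the $\hat Y$-term, the conclusion follows.

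There is no genuine obstacle: the argument is entirely a repetition of Lemma~\ref{PMY} with two cosmetic adjustments. The only point to verify carefully is that the uniform bound on $f$ from Condition~\ref{A3} is what allows the $\hat X^\varepsilon$ estimate to be completely decoupled from any a priori control on $\hat Y^\varepsilon$ (otherwise one would need a simultaneous Gronwall argument on both components), and that the bound $\EE\|X^\varepsilon_{s(\delta)}\|\leq C_T(1+\|x\|)$ used in the $\hat Y^\varepsilon$ step really holds uniformly in $s\in[0,T]$ and $\varepsilon\in(0,1)$, which is exactly the content of \eref{Control X}.
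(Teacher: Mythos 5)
Your proposal is correct and matches the paper's intent exactly: the paper omits the proof of Lemma \ref{MDY} with the remark that it ``almost follows the same steps'' as Lemma \ref{PMY}, and your write-up simply carries out that repetition, correctly identifying the two points that need checking (the uniform bound on $f$ from Condition \ref{A3} decouples the $\hat X^{\varepsilon}$ estimate from $\hat Y^{\varepsilon}$, and $\mathbb{E}\|X^{\varepsilon}_{s(\delta)}\|\leq C_T(1+\|x\|)$ follows from \eref{Control X}). No further comment is needed.
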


\begin{lemma} \label{DEY}
For any $x\in H^{\theta}$ with $\theta\in(1/2,1]$, $y\in H$, $1\leq p<\alpha$, $T>0$ and $R>0$, there exists a constant $C_{p,T}>0$ such that
$$
\EE\left(\int^{T\wedge \tau^{\vare}_R}_0\|Y_{t}^{\varepsilon}-\hat Y_{t}^{\varepsilon}\|dt\right)^p
\leq C_{p,T}e^{C_{p,T}R^6}(\|x\|^p_{\theta}+1)\delta^{\frac{p\theta}{2}},
$$
where $\tau^{\vare}_R$ is defined by \eref{ST} .
\end{lemma}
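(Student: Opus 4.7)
The plan is to subtract the two mild formulations, exploit the common noise, and reduce the bound to an estimate on $\|X^{\vare}_s - X^{\vare}_{s(\delta)}\|$ already controlled by Lemma \ref{COX}. Subtracting \eref{AuxiliaryPro Y 01} from the second line of \eref{A mild solution}, the initial-data term and the stochastic convolution cancel, leaving
\begin{equation*}
Y^{\vare}_t - \hat Y^{\vare}_t \;=\; \frac{1}{\vare}\int_0^t e^{(t-s)A/\vare}\bigl[g(X^{\vare}_s, Y^{\vare}_s) - g(X^{\vare}_{s(\delta)}, \hat Y^{\vare}_s)\bigr]\,ds.
\end{equation*}
Setting $h(t) := \|Y^{\vare}_t - \hat Y^{\vare}_t\|$, applying \eref{P1/2}, condition \ref{A1} and the Lipschitz split $\|g(X^{\vare}_s,Y^{\vare}_s) - g(X^{\vare}_{s(\delta)},\hat Y^{\vare}_s)\| \leq C\|X^{\vare}_s - X^{\vare}_{s(\delta)}\| + L_g\, h(s)$ yields
\begin{equation*}
h(t) \;\leq\; \frac{1}{\vare}\int_0^t e^{-\lambda_1(t-s)/\vare}\bigl[C\|X^{\vare}_s - X^{\vare}_{s(\delta)}\| + L_g\, h(s)\bigr]\,ds.
\end{equation*}

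Instead of a pathwise Gronwall in $h$, I would integrate this pointwise bound over $[0,T\wedge\tau^{\vare}_R]$ and swap the order of integration. The key observation is that $\frac{1}{\vare}\int_s^{T\wedge\tau^{\vare}_R} e^{-\lambda_1(t-s)/\vare}\,dt \leq 1/\lambda_1$, so the $1/\vare$ prefactor is entirely absorbed. This produces
\begin{equation*}
\int_0^{T\wedge\tau^{\vare}_R} h(t)\,dt \;\leq\; \frac{C}{\lambda_1}\int_0^{T\wedge\tau^{\vare}_R}\|X^{\vare}_s - X^{\vare}_{s(\delta)}\|\,ds \;+\; \frac{L_g}{\lambda_1}\int_0^{T\wedge\tau^{\vare}_R} h(s)\,ds.
\end{equation*}
Here condition \ref{A4} enters crucially: since $L_g/\lambda_1 < 1$, the $h$-term can be absorbed to the left, giving
\begin{equation*}
\int_0^{T\wedge\tau^{\vare}_R} h(t)\,dt \;\leq\; \frac{C}{\lambda_1 - L_g}\int_0^{T\wedge\tau^{\vare}_R}\|X^{\vare}_s - X^{\vare}_{s(\delta)}\|\,ds.
\end{equation*}

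Raising to the $p$-th power, applying Jensen's inequality (valid since $p\geq 1$) and taking expectation gives
\begin{equation*}
\EE\Bigl(\int_0^{T\wedge\tau^{\vare}_R} h(t)\,dt\Bigr)^p \;\leq\; C_{p,T}\int_0^T \EE\bigl[\|X^{\vare}_s - X^{\vare}_{s(\delta)}\|^p\, 1_{\{s\leq T\wedge\tau^{\vare}_R\}}\bigr]\,ds.
\end{equation*}
Since $s(\delta)\leq s\leq T\wedge\tau^{\vare}_R$ and $s - s(\delta) \leq \delta$, Lemma \ref{COX} (applied with $t = s(\delta)$, $h = s - s(\delta)$) bounds each expectation by $C_{p,T}e^{C_{p,T}R^6}(\|x\|^p_\theta + 1)\delta^{p\theta/2}$, and integrating in $s$ over $[0,T]$ produces the desired bound. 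The main subtlety is the integration trick that converts the singular $1/\vare$ prefactor into a harmless constant and then uses the dissipativity $\lambda_1 > L_g$ to close the Gronwall-type estimate in integral form; everything else is routine.
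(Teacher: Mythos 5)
Your proposal is correct and follows essentially the same route as the paper: subtract the two mild formulations so the noise and initial data cancel, integrate the resulting pointwise bound over $[0,T\wedge\tau^{\vare}_R]$, use Fubini to absorb the $1/\vare$ prefactor via $\frac{1}{\vare}\int_s^{T\wedge\tau^{\vare}_R}e^{-\lambda_1(t-s)/\vare}\,dt\leq 1/\lambda_1$, close the estimate using $L_g<\lambda_1$, and finish with Lemma \ref{COX}. The only cosmetic difference is that you spell out the Jensen step and the exact parameters in the application of Lemma \ref{COX}, which the paper leaves implicit.
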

\begin{proof}
By the construction of $Y_{t}^{\varepsilon}$ and $\hat{Y}_{t}^{\varepsilon}$, we have
\begin{eqnarray*}
Y_{t}^{\varepsilon}-\hat Y_{t}^{\varepsilon}=\frac{1}{\vare}\int^t_0 e^{(t-s)A/\vare}\left[g(X_{s}^{\varepsilon},Y_{s}^{\varepsilon})-g(X_{s(\delta)}^{\varepsilon},\hat{Y}_{s}^{\varepsilon})\right]ds.
\end{eqnarray*}
Then for any $t>0$,
\begin{eqnarray*}
\|Y_{t}^{\varepsilon}-\hat Y_{t}^{\varepsilon}\|\leq\!\!\!\!\!\!\!\!&&\frac{1}{\vare}\int^t_0 e^{-\lambda_1(t-s)/\vare}L_g\|X_{s}^{\varepsilon}-X_{s(\delta)}^{\varepsilon}\|ds\nonumber\\
\!\!\!\!\!\!\!\!&&+\frac{1}{\vare}\int^t_0 e^{-\lambda_1(t-s)/\vare}L_g\|Y_{s}^{\varepsilon}-\hat Y_{s}^{\varepsilon}\|ds.
\end{eqnarray*}
By Fubini's theorem,
\begin{eqnarray*}
\int^{T\wedge \tau^{\vare}_R}_0\|Y_{t}^{\varepsilon}-\hat Y_{t}^{\varepsilon}\|dt\leq\!\!\!\!\!\!\!\!&&\frac{1}{\vare}\int^{T\wedge \tau^{\vare}_R}_0\int^t_0 e^{-\lambda_1(t-s)/\vare}L_g\|X_{s}^{\varepsilon}-X_{s(\delta)}^{\varepsilon}\|dsdt\nonumber\\
\!\!\!\!\!\!\!\!&&+\frac{1}{\vare}\int^{T\wedge \tau^{\vare}_R}_0\int^t_0 e^{-\lambda_1(t-s)/\vare}L_g\|Y_{s}^{\varepsilon}-\hat Y_{s}^{\varepsilon}\|dsdt\\
=\!\!\!\!\!\!\!\!&&\frac{L_g}{\vare}\int^{T\wedge \tau^{\vare}_R}_0\left(\int^{T\wedge \tau^{\vare}_R}_s e^{-\lambda_1(t-s)/\vare}dt\right)\|X_{s}^{\varepsilon}-X_{s(\delta)}^{\varepsilon}\|ds\nonumber\\
\!\!\!\!\!\!\!\!&&+\frac{L_g}{\vare}\int^{T\wedge \tau^{\vare}_R}_0\left(\int^{T\wedge \tau^{\vare}_R}_s e^{-\lambda_1(t-s)/\vare}dt\right)\|Y_{s}^{\varepsilon}-\hat Y_{s}^{\varepsilon}\|ds\\
\leq\!\!\!\!\!\!\!\!&&C\int^{T\wedge \tau^{\vare}_R}_0\|X_{s}^{\varepsilon}-X_{s(\delta)}^{\varepsilon}\|ds+\frac{L_g}{\lambda_1}\int^{T\wedge \tau^{\vare}_R}_0\|Y_{s}^{\varepsilon}-\hat Y_{s}^{\varepsilon}\|ds.
\end{eqnarray*}
By Lemma \ref{COX} and $L_g<\lambda_1$, we have
\begin{eqnarray*}
\EE\left(\int^{T\wedge \tau^{\vare}_R}_0\|Y_{t}^{\varepsilon}-\hat Y_{t}^{\varepsilon}\|dt\right)^p
\leq\!\!\!\!\!\!\!\!&&C_{p,T}e^{C_{p,T}R^6}(\|x\|^p_{\theta}+1)\delta^{\frac{p\theta}{2}}.
\end{eqnarray*}
The proof is complete.
\end{proof}

\vspace{0.3cm}
 In the next lemma, we shall deal with the difference process $X^{\vare}_t-\hat{X}_{t}^{\vare}$. To this end, we construct another stopping time, i.e., for any $\vare\in(0,1)$, $R>0$,
\begin{eqnarray*}
\tilde{\tau}^{\varepsilon}_R:=\!\!\!\!\!\!\!\!&&\inf\Big\{t>0:\|X^{\vare}_t\|+\|\hat X^{\vare}_t\|+\int^t_0\frac{\|X^{\vare}_s\|^2_1}{(\|X^{\vare}_s\|^2+1)^{1/2}}ds\\
\!\!\!\!\!\!\!\!&&\quad\quad\quad\quad\quad\quad+\int^t_0\frac{\|\hat X^{\vare}_s\|^2_1}{(\|\hat X^{\vare}_s\|^2+1)^{1/2}}ds\geq R\Big\}.
\end{eqnarray*}

\begin{lemma} \label{DEX}
For any $x\in H^{\theta}$ with $\theta\in (1/2, 1]$, $y\in H$, $1\leq p<\alpha$, $T>0$ and $R>0$
there exists a constant $C_{p,T}>0$ such that
\begin{align*}
\mathbb{E}\Big(\sup_{0\leq t\leq T\wedge \tilde\tau^{\varepsilon}_R} \|X_{t}^{\vare}-\hat{X}_{t}^{\vare} \|^{p}\Big)
\leq C_{p,T}e^{C_{p,T}R^6}(\|x\|^p_{\theta}+1)\delta^{\frac{p\theta}{2}}.
\end{align*}
\end{lemma}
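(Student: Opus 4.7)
The plan is to exploit the cancellation of the noise in the difference $Z^\vare_t := X^\vare_t - \hat X^\vare_t$. Subtracting the mild formulas \eref{A mild solution} and \eref{AuxiliaryPro X 01}, the two stochastic convolutions $\int_0^t e^{(t-s)A}dL_s$ cancel and
\[
Z^\vare_t = \int_0^t e^{(t-s)A}\bigl[N(X^\vare_s) - N(\hat X^\vare_s)\bigr]ds + \int_0^t e^{(t-s)A}\bigl[f(X^\vare_s, Y^\vare_s) - f(X^\vare_{s(\delta)}, \hat Y^\vare_s)\bigr]ds,
\]
with $Z^\vare_0 = 0$, so no stochastic integral remains and the estimate becomes essentially deterministic. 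Taking the $H$-norm, using the contraction property \eref{P1/2}, the locally Lipschitz bound \eref{P3} for $N$, and the global Lipschitz continuity of $f$ from condition \ref{A1}, I obtain the integral inequality
\[
\|Z^\vare_t\| \leq C\int_0^t\bigl(1 + \|X^\vare_s\|^2_{1/2} + \|\hat X^\vare_s\|^2_{1/2}\bigr)\|Z^\vare_s\|\,ds + L_f\int_0^t\bigl(\|X^\vare_s - X^\vare_{s(\delta)}\| + \|Y^\vare_s - \hat Y^\vare_s\|\bigr)ds.
\]

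Gronwall's inequality then yields, for every $t \leq T\wedge\tilde\tau^\vare_R$,
\[
\|Z^\vare_t\| \leq L_f\exp\!\Bigl(C\!\int_0^{T\wedge\tilde\tau^\vare_R}\!\bigl(1+\|X^\vare_s\|^2_{1/2}+\|\hat X^\vare_s\|^2_{1/2}\bigr)ds\Bigr)\!\int_0^{T\wedge\tilde\tau^\vare_R}\!\bigl(\|X^\vare_s - X^\vare_{s(\delta)}\|+\|Y^\vare_s - \hat Y^\vare_s\|\bigr)ds.
\]
The precise reason for the form of $\tilde\tau^\vare_R$ is to tame this exponential: by the interpolation $\|u\|^2_{1/2}\leq \|u\|\cdot\|u\|_1$ and Cauchy--Schwarz,
\[
\int_0^{T\wedge\tilde\tau^\vare_R}\|X^\vare_s\|^2_{1/2}\,ds \leq \Bigl(\int_0^{T\wedge\tilde\tau^\vare_R}\|X^\vare_s\|^2(\|X^\vare_s\|^2+1)^{1/2}ds\Bigr)^{1/2}\Bigl(\int_0^{T\wedge\tilde\tau^\vare_R}\frac{\|X^\vare_s\|^2_1}{(\|X^\vare_s\|^2+1)^{1/2}}ds\Bigr)^{1/2},
\]
and similarly for $\hat X^\vare$; by the very definition of $\tilde\tau^\vare_R$ together with $\|X^\vare_s\|\leq R$, both integrals are bounded by a polynomial in $R$, so the exponential factor is at most $e^{\psi(R)}$ for some polynomial $\psi$.

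It only remains to raise to the $p$-th power, take the supremum over $[0, T\wedge\tilde\tau^\vare_R]$ (trivial, since the above bound is $t$-independent) and then expectation. The $X$-increment term is handled by H\"older $(\int g\,ds)^p\leq T^{p-1}\int g^p\,ds$ combined with Lemma \ref{COX} applied to $h = s-s(\delta)\leq \delta$ (valid since $s\leq T\wedge\tilde\tau^\vare_R\leq T\wedge\tau^\vare_R$), producing the rate $\delta^{p\theta/2}$. The $Y$-difference term is handled directly by Lemma \ref{DEY}, also with rate $\delta^{p\theta/2}$ and using the same monotonicity $\tilde\tau^\vare_R\leq\tau^\vare_R$. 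The factor $e^{C_{p,T}R^6}$ already contained in Lemmas \ref{COX} and \ref{DEY} absorbs the $e^{p\psi(R)}$ coming from Gronwall, and the stated bound follows. The main technical point is the interpolation/Cauchy--Schwarz step that justifies the specific form of $\tilde\tau^\vare_R$, especially why the quantity $\int_0^t \|X^\vare_s\|^2_1/(\|X^\vare_s\|^2+1)^{1/2}\,ds$ is included in the stopping condition; the remainder is bookkeeping with the two earlier lemmas.
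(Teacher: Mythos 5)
Your proposal is correct and follows essentially the same route as the paper: the same cancellation of the stochastic convolutions, the same Gronwall argument, and the same reduction to Lemmas \ref{COX} and \ref{DEY} via $\tilde{\tau}^{\varepsilon}_R\leq\tau^{\varepsilon}_R$. The only (harmless) difference is in taming the Gronwall exponent: you interpolate $\|u\|_{1/2}^2\leq\|u\|\,\|u\|_1$ and apply Cauchy--Schwarz to land on the quantities in the definition of $\tilde{\tau}^{\varepsilon}_R$, whereas the paper bounds $\|\cdot\|_{1/2}$ by $\|\cdot\|_1$ via \eref{P-1} and then writes $\|X^{\vare}_s\|_1^2=\frac{\|X^{\vare}_s\|_1^2}{(\|X^{\vare}_s\|^2+1)^{1/2}}\cdot(\|X^{\vare}_s\|^2+1)^{1/2}$ and uses $\|X^{\vare}_s\|\leq R$ before the stopping time; both give an $e^{\mathrm{poly}(R)}$ factor absorbed into $e^{C_{p,T}R^6}$.
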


\begin{proof}
In view of \eqref{AuxiliaryPro X 01} and \eqref{A mild solution}, we have
\begin{align*}
X^{\vare}_t-\hat{X}^{\vare}_t=\int^t_0e^{(t-s)A}\big[N(X^{\vare}_s)-N(\hat X^{\vare}_{s})\big]ds
+\int^t_0e^{(t-s)A}\big[f(X^{\vare}_s, Y^{\vare}_s)-f(X^{\vare}_{s(\delta)}, \hat{Y}^{\vare}_s)\big]ds.
\end{align*}

Using condition \ref{A1}, properties \eref{P-1} and \eref{P3}, we get
\begin{eqnarray}
\| X_{t}^{\vare}-\hat{X}_{t}^{\vare} \|
\leq\!\!\!\!\!\!\!\!&&\int_{0}^{t}\big\|N(X^{\vare}_s)-N(\hat X^{\vare}_{s})\big\|ds
+\int_{0}^{t}\|f(X^{\vare}_s, Y^{\vare}_s)-f(X^{\vare}_{s(\delta)}, \hat{Y}^{\vare}_s)\|ds\nonumber\\
\leq\!\!\!\!\!\!\!\!&&C\int_{0}^{t}\big(1+\|X^{\vare}_s\|^2_{1}+\|\hat X^{\vare}_{s}\|^2_{1}\big)\|X^{\vare}_s-\hat X^{\vare}_{s}\|ds\nonumber\\
\!\!\!\!\!\!\!\!&&
+C\int_{0}^{t}\big(\|X^{\vare}_s-X^{\vare}_{s(\delta)}\|+\|Y^{\vare}_s-\hat{Y}^{\vare}_s\|\big)ds.\nonumber
\end{eqnarray}
The Gronwall's inequality implies for any $T>0$,
\begin{eqnarray*}
\sup_{0\leq t\leq T}\| X_{t}^{\vare}-\hat{X}_{t}^{\vare} \|\leq\!\!\!\!\!\!\!\!&&C\int_{0}^{T}\big(\|X^{\vare}_s-X^{\vare}_{s(\delta)}\|+\|Y^{\vare}_s-\hat{Y}^{\vare}_s\|\big)ds e^{C\int^{T}_0 \left(1+\|X^{\vare}_s\|^2_1+\|\hat X^{\vare}_s\|^2_1\right)ds}.
\end{eqnarray*}
Then by the definition of $\tilde{\tau}^{\varepsilon}_R$, we have
\begin{eqnarray*}
&&\sup_{0\leq t\leq T\wedge \tilde{\tau}^{\varepsilon}_R}\| X_{t}^{\vare}-\hat{X}_{t}^{\vare} \|\\
\leq\!\!\!\!\!\!\!\!&&C\int_{0}^{T\wedge \tilde{\tau}^{\varepsilon}_R}\big(\|X^{\vare}_s-X^{\vare}_{s(\delta)}\|+\|Y^{\vare}_s-\hat{Y}^{\vare}_s\|\big)ds e^{C\int_{0}^{T\wedge \tilde{\tau}^{\varepsilon}_R}\left[\frac{\|X^{\vare}_{s}\|^2_{1}(\|X^{\vare}_{s}\|^2+1)^{1/2}}{(\|X^{\vare}_{s}\|^2+1)^{1/2}}+\frac{\|\hat X^{\vare}_{s}\|^2_{1}(\|\hat X^{\vare}_{s}\|^2+1)^{1/2}}{(\|\hat X^{\vare}_{s}\|^2+1)^{1/2}}+1\right]ds}\nonumber\\
\leq\!\!\!\!\!\!\!\!&&C\int_{0}^{T\wedge \tilde{\tau}^{\varepsilon}_R}\big(\|X^{\vare}_s-X^{\vare}_{s(\delta)}\|+\|Y^{\vare}_s-\hat{Y}^{\vare}_s\|\big)ds e^{C(R+1)\int^{T\wedge \tilde{\tau}^{\varepsilon}_R}_0 \left[\frac{\|X^{\vare}_s\|^2_1}{(\|X^{\vare}_s\|^2+1)^{1/2}}+\frac{\|\hat X^{\vare}_s\|^2_1}{(\|\hat X^{\vare}_s\|^2+1)^{1/2}}+1\right]ds}\nonumber\\
\leq\!\!\!\!\!\!\!\!&&Ce^{C_T(R^2+1)}\int_{0}^{T\wedge \tilde{\tau}^{\varepsilon}_R}\big(\|X^{\vare}_s-X^{\vare}_{s(\delta)}\|+\|Y^{\vare}_s-\hat{Y}^{\vare}_s\|\big)ds.
\end{eqnarray*}
Note that $\tilde{\tau}^{\varepsilon}_R\leq \tau^{\varepsilon}_R$, then it follows from Lemmas \ref{COX} and \ref{DEY}, we have
\begin{eqnarray*}
\EE\left[\sup_{0\leq t\leq T\wedge \tilde{\tau}^{\varepsilon}_R}\| X_{t}^{\vare}-\hat{X}_{t}^{\vare} \|^p\right]
\leq C_{p,T}e^{C_{p,T}R^6}(\|x\|^p_{\theta}+1)\delta^{\frac{p\theta}{2}}.\nonumber
\end{eqnarray*}
The proof is complete.  
\end{proof}

\subsection{The frozen and averaged equation}\label{sub 3.3}
For any fixed $x\in H$, we first consider the following frozen equation
associated with the fast component:
\begin{equation} \label{FEQ}
dY_{t}=\left[AY_{t}dt+g(x,Y_{t})\right]dt+d\bar Z_t,\quad Y_{0}=y,
\end{equation}
where $\bar Z_t$ is a version of $Z_t$ and independent of $\{L_t\}_{t\geq 0}$ and  $\{Z_t\}_{t\geq 0}$. Since $g(x,\cdot)$ is Lipshcitz continuous,
it is easy to prove that for any fixed $x, y \in H$,  
the Eq. $\eref{FEQ}$ has a unique mild solution denoted by $Y_{t}^{x,y}$.
For any $x \in H$,
let $P^x_t$ be the transition semigroup of $Y_{t}^{x,y}$,
that is, for any bounded measurable function $\varphi$ on $H$ and $t \geq 0$,
\begin{align*}
P^x_t \varphi(y)= \mathbb{E} \varphi(Y_{t}^{x,y}), \quad y \in H.
\end{align*}
The asymptotic behavior of $P^x_t$ has been studied in many literatures,
the following result shows the existence and uniqueness of the invariant measure and
gives the exponential convergence to the equilibrium (see \cite[Lemma 3.3]{BYY}).
\begin{proposition}\label{ergodicity}
For any $x, y\in H$, $P^x_t$ admits a unique invariant measure $\mu^x$. Moreover, for any $t>0$,
\begin{eqnarray*}
\left\| \mathbb{E} f(x,Y_{t}^{x,y})-\int_{H}f(x,z)\mu^x(dz)\right\|
\leq C(1+ \|x \| + \|y \|)e^{-(\lambda_1-L_g)t},
\end{eqnarray*}
where $C$ is a positive constant which is independent of $t$.
\end{proposition}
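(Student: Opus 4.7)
The proposal is to use a standard dissipativity-plus-coupling argument, where the key ingredient is the condition \ref{A4} that $\lambda_1-L_g>0$.

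\textbf{Step 1: A priori moment estimate.} First I would establish that
\[
\sup_{t\geq 0}\mathbb{E}\|Y_t^{x,y}\|\leq C(1+\|x\|+\|y\|).
\]
This is obtained in parallel with the estimate of $Y^\varepsilon$ in Lemma \ref{PMY}: write the mild formulation
\[
Y_t^{x,y}=e^{tA}y+\int_0^t e^{(t-s)A}g(x,Y_s^{x,y})\,ds+\bar Z_A(t),
\]
use $\|g(x,u)\|\leq C(1+\|x\|)+L_g\|u\|$ from condition \ref{A1}, the decay \eref{P1/2}, and Remark~\ref{Re2} to control $\mathbb{E}\|\bar Z_A(t)\|$ uniformly in $t$. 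Taking expectations and supremum then yields
$\sup_t\mathbb{E}\|Y_t^{x,y}\|\leq C(1+\|x\|+\|y\|)+\tfrac{L_g}{\lambda_1}\sup_t\mathbb{E}\|Y_t^{x,y}\|$, and the dissipativity condition $L_g<\lambda_1$ closes the estimate.

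\textbf{Step 2: Pathwise contraction.} For two mild solutions $Y^{x,y}$ and $Y^{x,y'}$ driven by the same noise $\bar Z$, the stochastic integral cancels, giving
\[
Y_t^{x,y}-Y_t^{x,y'}=e^{tA}(y-y')+\int_0^t e^{(t-s)A}\bigl[g(x,Y_s^{x,y})-g(x,Y_s^{x,y'})\bigr]ds.
\]
By \eref{P1/2} and condition \ref{A1},
\[
\|Y_t^{x,y}-Y_t^{x,y'}\|\leq e^{-\lambda_1 t}\|y-y'\|+L_g\int_0^t e^{-\lambda_1(t-s)}\|Y_s^{x,y}-Y_s^{x,y'}\|\,ds.
\]
Setting $u(t):=e^{\lambda_1 t}\|Y_t^{x,y}-Y_t^{x,y'}\|$ and applying Gronwall gives $u(t)\leq\|y-y'\|e^{L_g t}$, i.e.\
\[
\|Y_t^{x,y}-Y_t^{x,y'}\|\leq e^{-(\lambda_1-L_g)t}\|y-y'\|.
\]
This pathwise contraction is the heart of the proof.

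\textbf{Step 3: Existence and uniqueness of $\mu^x$.} Existence follows from Step~1 via Krylov--Bogolyubov (the family of time-averaged laws is tight in $H$ since $A^{-1}$ is compact, and any weak limit is invariant). Uniqueness follows from Step~2: if $\mu$ is invariant and $\varphi:H\to\mathbb{R}$ is Lipschitz, then $|P_t^x\varphi(y)-\int\varphi\,d\mu|\leq[\varphi]_{\mathrm{Lip}}\int\mathbb{E}\|Y_t^{x,y}-Y_t^{x,z}\|\,\mu(dz)\to 0$. Moreover, invariance combined with the bound of Step~1 yields $\int_H\|z\|\,\mu^x(dz)\leq C(1+\|x\|)$ (obtained by applying the estimate at time $0$ after integrating against $\mu^x$, or by a Fatou argument on a sequence of empirical measures).

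\textbf{Step 4: Conclusion.} For the Lipschitz map $z\mapsto f(x,z)$, use invariance of $\mu^x$ to write
\[
\mathbb{E}f(x,Y_t^{x,y})-\int_H f(x,z)\mu^x(dz)=\int_H\bigl[\mathbb{E}f(x,Y_t^{x,y})-\mathbb{E}f(x,Y_t^{x,z})\bigr]\mu^x(dz).
\]
By condition \ref{A1} and Step~2,
\[
\bigl\|\mathbb{E}f(x,Y_t^{x,y})-\mathbb{E}f(x,Y_t^{x,z})\bigr\|\leq L_f\,e^{-(\lambda_1-L_g)t}\|y-z\|,
\]
and integrating against $\mu^x$ together with $\int\|z\|\mu^x(dz)\leq C(1+\|x\|)$ gives the claimed bound $C(1+\|x\|+\|y\|)e^{-(\lambda_1-L_g)t}$.

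The main obstacle is the Krylov--Bogolyubov compactness argument for existence in the stable-noise setting (where only $\mathbb{E}\|Y_t\|$ rather than $\mathbb{E}\|Y_t\|^2$ is available); this is handled by working with a weaker compact embedding (e.g.\ tightness in $H^{-\sigma}$ followed by identification of the limit) or by appealing directly to \cite[Lemma 3.3]{BYY} as the authors do. Everything else is routine once the dissipativity $\lambda_1-L_g>0$ is exploited.
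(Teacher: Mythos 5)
The paper does not actually prove this proposition: it simply quotes it from \cite[Lemma 3.3]{BYY}, so your write-up supplies an argument where the authors supply a citation. Your argument is the standard dissipativity one and Steps 1, 2 and 4 are correct as written: the mild-solution Gronwall computation in Step 2 gives exactly the contraction $\|Y_t^{x,y}-Y_t^{x,y'}\|\leq e^{-(\lambda_1-L_g)t}\|y-y'\|$ (it is the same mechanism as the paper's Lemma \ref{L3.17}, there run in the $x$-variable via an energy estimate), and combining it with the uniform first-moment bound and the moment bound $\int_H\|z\|\,\mu^x(dz)\leq C(1+\|x\|)$ yields the stated exponential convergence with the correct rate and prefactor.

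The one soft spot is the existence step. Krylov--Bogolyubov needs tightness of the time-averaged laws, and a uniform bound on $\mathbb{E}\|Y_t^{x,y}\|$ alone does not give tightness in $H$ (balls in $H$ are not compact); you would need a uniform bound in some $H^{\sigma}$, $\sigma>0$, which condition \ref{A2} (namely \eref{I2.10}) does not obviously provide for the stochastic convolution $\bar Z_A$. Your fallback of proving tightness in $H^{-\sigma}$ and then identifying the limit is workable but fiddly. The cleaner route, which makes Step 2 do all the work, is to note that the contraction plus the Markov property give
\begin{equation*}
W_1\bigl((P^x_t)^*\delta_y,\,(P^x_s)^*\delta_y\bigr)\leq e^{-(\lambda_1-L_g)s}\,W_1\bigl((P^x_{t-s})^*\delta_y,\,\delta_y\bigr)\leq C(1+\|x\|+\|y\|)\,e^{-(\lambda_1-L_g)s},\qquad s<t,
\end{equation*}
so the laws of $Y_t^{x,y}$ form a Cauchy family in the Wasserstein-$1$ metric, which is complete on probability measures on $H$ with finite first moment; the limit is the invariant measure and no compactness is needed. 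With that replacement (or with the citation of \cite[Lemma 3.3]{BYY} that you already offer), the proof is complete.
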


The following lemma is used to prove the existence and uniqueness of the solution of corresponding averaged equation, we state it ahead.
\begin{lemma} \label{L3.17} For any $x_1, x_2\in H$, $y\in H$,  we have
\begin{eqnarray*}
\sup_{t\geq 0}\|Y^{x_1,y}_t-Y^{x_2,y}_t\|^2\leq (\lambda_1-L_g)^{-1}\|x_1-x_2\|^2.
\end{eqnarray*}
\end{lemma}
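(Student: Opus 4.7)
The plan is to introduce the difference process $\rho_t := Y^{x_1,y}_t - Y^{x_2,y}_t$ and exploit the fact that both frozen equations share the same initial value $y$ and the same driving noise $\bar Z$. Subtracting, the stochastic integral cancels and $\rho_t$ satisfies the pathwise (for each $\omega$) evolution equation
\begin{equation*}
\partial_t \rho_t = A\rho_t + g(x_1, Y^{x_1,y}_t) - g(x_2, Y^{x_2,y}_t), \quad \rho_0 = 0.
\end{equation*}
Thus the problem reduces to a deterministic PDE-style estimate that does not see the jump noise.

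Next, I would apply the chain rule to $\|\rho_t\|^2$, using the spectral bound $\langle A z, z\rangle \leq -\lambda_1 \|z\|^2$ for $z \in V$ together with the Lipschitz estimate on $g$ given by condition \ref{A1}, namely
\begin{equation*}
\|g(x_1, Y^{x_1,y}_t) - g(x_2, Y^{x_2,y}_t)\| \leq C\|x_1 - x_2\| + L_g \|\rho_t\|.
\end{equation*}
Combining Cauchy--Schwarz and Young's inequality (splitting the cross term with weight $\lambda_1 - L_g$, which is strictly positive by condition \ref{A4}) leads to a differential inequality of the form
\begin{equation*}
\tfrac{d}{dt}\|\rho_t\|^2 \leq -(\lambda_1 - L_g)\|\rho_t\|^2 + \tfrac{C^2}{\lambda_1 - L_g}\|x_1 - x_2\|^2.
\end{equation*}

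The last step is a direct Gronwall-type integration: multiplying by the integrating factor $e^{(\lambda_1-L_g)t}$ and using $\rho_0 = 0$ yields
\begin{equation*}
\|\rho_t\|^2 \leq \tfrac{C^2}{(\lambda_1-L_g)^2}\bigl(1 - e^{-(\lambda_1-L_g)t}\bigr)\|x_1-x_2\|^2,
\end{equation*}
which is uniform in $t \geq 0$ and gives the claimed estimate (up to the absorbed Lipschitz constant).

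The main obstacle is a regularity one: the mild solution $Y^{x_i,y}$ need not be smooth enough to justify the chain rule for $\|\rho_t\|^2$ directly in $H$. The cleanest way around this is to perform the computation on the Galerkin approximations (which are already set up in the appendix for the existence theorem) where It\^o's formula applies without issue, and then pass to the limit using the convergence of the Galerkin truncations. An alternative, avoiding differential calculus altogether, is to work entirely from the mild formulation $\rho_t = \int_0^t e^{(t-s)A}[g(x_1, Y^{x_1,y}_s) - g(x_2, Y^{x_2,y}_s)]\,ds$, applying property \eref{P1/2} to gain the factor $e^{-\lambda_1(t-s)}$ and then using a Gronwall lemma in $\|\rho_t\|$. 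Either route is routine once the cancellation of the noise has been observed.
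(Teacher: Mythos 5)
Your proposal is correct and follows essentially the same route as the paper: cancel the noise in the difference process, differentiate $\|Y^{x_1,y}_t-Y^{x_2,y}_t\|^2$, use the spectral gap $-\lambda_1$ together with the Lipschitz bound on $g$ and Young's inequality to get the dissipative differential inequality, and integrate (the paper invokes a comparison theorem where you use an integrating factor). Your remark that the final constant is only $(\lambda_1-L_g)^{-1}$ ``up to the absorbed Lipschitz constant'' is a fair observation --- the paper's own computation has the same looseness --- and your regularity caveat (Galerkin or mild formulation) is a refinement the paper skips but does not change the argument.
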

\begin{proof}
Note that
\begin{eqnarray*}
d(Y^{x_1,y}_t-Y^{x_2,y}_t)=A(Y^{x_1,y}_t-Y^{x_2,y}_t)dt+\left[g(x_1, Y^{x_1,y}_t)-g(x_2, Y^{x_2,y}_t)\right]dt.
\end{eqnarray*}
By Young's inequality, it is easy to see
\begin{eqnarray*}
\frac{d}{dt}\|Y^{x_1,y}_t-Y^{x_2,y}_t\|^2=\!\!\!\!\!\!\!\!&&2\|Y^{x_1,y}_t-Y^{x_2,y}_t\|^2_1+2\langle g(x_1, Y^{x_1,y}_t)-g(x_2, Y^{x_2,y}_t), Y^{x_1,y}_t-Y^{x_2,y}_t\rangle\\
\leq\!\!\!\!\!\!\!\!&&-2\lambda_1\|Y^{x_1,y}_t-Y^{x_2,y}_t\|^2\!\!+2L_g \|Y^{x_1,y}_t-Y^{x_2,y}_t\|^2\\
&&+C\|x_1-x_2\|\|Y^{x_1,y}_t-Y^{x_2,y}_t\|\\
\leq\!\!\!\!\!\!\!\!&&-(\lambda_1-L_g)\lambda_1\|Y^{x_1,y}_t-Y^{x_2,y}_t\|^2+C\|x_1-x_2\|^2.
\end{eqnarray*}
Then the compare theorem yields
\begin{eqnarray*}
\sup_{t\geq 0}\|Y^{x_1,y}_t-Y^{x_2,y}_t\|^2\leq \int^{\infty}_{0}e^{-(\lambda_1-L_g)s}ds\|x_1-x_2\|^2\leq (\lambda_1-L_g)^{-1}\|x_1-x_2\|^2.
\end{eqnarray*}
The proof is complete.
\end{proof}

\vspace{0.3cm}
Now, we introduce the averaged equation which satisfies

\begin{equation}
\left\{\begin{array}{l}
\displaystyle d\bar{X}_{t}=\left[A \bar{X}_{t}+N(\bar{X}_{t})+\bar{f}(\bar{X}_{t})\right]dt+dL_t,\\
\bar{X}_{0}=x,\end{array}\right. \label{3.1}
\end{equation}
where
\begin{align*}
\bar{f}(x)=\int_{H}f(x,y)\mu^{x}(dy), \quad x\in H.
\end{align*}

The following lemma is the existence and uniqueness of the solution and its a-priori estimates.
\begin{lemma} \label{barX}
Eq. \eref{3.1} exists a unique mild solution $\bar{X}_{t}$ satisfying
\begin{eqnarray}
\bar X_t=e^{tA}x+\int^t_0e^{(t-s)A}N(\bar X_s)ds+\int^t_0e^{(t-s)A} \bar f(\bar X_s)ds+\int^t_0 e^{(t-s)A}dL_s.\label{3.22}
\end{eqnarray}
Moreover, for any $x\in H$, $T>0$ and $1\leq p< \alpha$, there exists a constant $C_{p,T}>0$ such that
\begin{align} \label{Control X}
\mathbb{E}\left(\sup_{0\leq t\leq T}\|\bar X_{t} \|^p\right)+\EE\int^T_0 \frac{\|\bar X_{t} \|^2_1}{(\|\bar X_{t} \|^2+1)^{1-p/2}}ds
\leq  C_{p,T}(1+ \|x\|^p).
\end{align}
\end{lemma}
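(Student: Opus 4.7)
The plan is to reduce the statement to the arguments already developed for the slow--fast system, after checking that the averaged nonlinearity $\bar f$ is as well behaved as the original $f$.

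First I would verify that $\bar f:H\to H$ is uniformly bounded and Lipschitz continuous. Boundedness is immediate from condition~\ref{A3}: since $\bar f(x)$ is the $\mu^x$-average of $f(x,\cdot)$, one has $\|\bar f(x)\|\leq \sup_{z,w}\|f(z,w)\|\leq C$. For the Lipschitz property I would combine Proposition~\ref{ergodicity} with Lemma~\ref{L3.17}. Namely, by the ergodic convergence, for any $y\in H$,
\begin{align*}
\|\bar f(x_1)-\bar f(x_2)\|
&=\lim_{t\to\infty}\bigl\|\mathbb E f(x_1,Y_t^{x_1,y})-\mathbb E f(x_2,Y_t^{x_2,y})\bigr\|\\
&\leq L_f\bigl(\|x_1-x_2\|+\limsup_{t\to\infty}\mathbb E\|Y_t^{x_1,y}-Y_t^{x_2,y}\|\bigr),
\end{align*}
and Lemma~\ref{L3.17} bounds the last term by $(\lambda_1-L_g)^{-1/2}\|x_1-x_2\|$. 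Thus $\bar f$ satisfies (globally) the same kind of Lipschitz/boundedness hypotheses as $f$, but now as a function of $\bar X$ alone.

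Next I would address existence and uniqueness of the mild solution to \eqref{3.1}. Since the drift structure is identical to the slow equation in \eqref{main equation} with $f(X^{\vare},Y^{\vare})$ replaced by the bounded Lipschitz $\bar f(\bar X)$, and the noise is the same cylindrical $\alpha$-stable process $L_t$, the Banach fixed-point argument used in the appendix to prove Theorem~\ref{EUS} applies verbatim and is in fact simpler because there is no fast component to couple with; this yields a unique mild solution $\bar X\in D([0,\infty);H)\cap D((0,\infty);V)$ satisfying \eqref{3.22}.

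For the a priori bound \eqref{Control X}, I would run exactly the same Galerkin scheme as in Lemma~\ref{PMY}: set $\bar X^{m}$ to be the Galerkin projection of \eqref{3.1}, apply It\^o's formula to $U(x)=(\|x\|^{2}+1)^{p/2}$ using \eqref{F3.1}--\eqref{F3.2}, and estimate the four resulting terms. The cubic term is controlled by \eqref{P5} just as $I_1^m$ was, the drift term $\langle\bar f(\bar X_s^m),D U(\bar X_s^m)\rangle$ is controlled by boundedness of $\bar f$ (so we do not even need the extra $\|X^{\varepsilon}_s\|^p$-term that appeared from $f$), and the jump terms $I_2^m,I_3^m,I_4^m$ are handled identically using the Burkholder--Davis--Gundy inequality, Taylor expansion and condition~\ref{A2}. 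Gronwall's inequality followed by Fatou's lemma (after invoking Theorem~\ref{GA} for the convergence of the Galerkin approximation) gives \eqref{Control X}.

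The only genuinely new step is the Lipschitz continuity of $\bar f$, and the main subtlety there is that the invariant measure $\mu^{x}$ depends on $x$; but Lemma~\ref{L3.17} was tailored precisely to control the $x$-dependence through the pathwise comparison of $Y^{x_1,y}_t$ and $Y^{x_2,y}_t$, so no further obstacle arises. Everything else is a direct transcription of the slow-equation estimates in the simpler autonomous setting.
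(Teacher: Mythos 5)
Your proposal is correct and follows essentially the same route as the paper: boundedness of $\bar f$ from condition \ref{A3}, Lipschitz continuity of $\bar f$ by combining the exponential ergodicity of Proposition \ref{ergodicity} (letting $t\to\infty$) with the pathwise comparison of Lemma \ref{L3.17}, and then existence/uniqueness and the a priori bound by repeating the arguments of Theorem \ref{EUS} and Lemma \ref{PMY} with $f(X^{\vare},Y^{\vare})$ replaced by the bounded Lipschitz map $\bar f(\bar X)$. The paper's proof is exactly this, with the Lipschitz step written as a three-term triangle inequality before sending $t\to\infty$.
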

\begin{proof}
It is sufficient to check that the $\bar f$ is Lipschitz continuous and bounded, then the results can be easily obtained by following the procedures in Theorem \ref{EUS} and Lemma \ref{PMY}.
Obviously, $\bar f$ is bounded by the boundedness of $f$. It remain to show $\bar f$ is Lipschitz.

In deed, for any $x_1,x_2,y\in H$ and $t>0$, by Proposition \ref{ergodicity} and Lemma \ref{L3.17}, we have
\begin{eqnarray*}
\|\bar{f}(x_1)-\bar{f}(x_2)\|
\leq\!\!\!\!\!\!\!\!&&\left\|\int_{H} f(x_1,z)\mu^{x_1}(dz)-\int_{H} f(x_2,z)\mu^{x_2}(dz)\right\|\\
\leq\!\!\!\!\!\!\!\!&&\left\|\int_{H} f(x_1,z)\mu^{x_1}(dz)-\EE f(x_1, Y^{x_1,y}_t)\right\|+\left\|\EE f(x_1, Y^{x_1,y}_t)-\EE f(x_2, Y^{x_2,y}_t)\right\|\\
\!\!\!\!\!\!\!\!&&+\left\|\EE f(x_2, Y^{x_2,y}_t)-\int_{H} f(x_2,z)\mu^{x_2}(dz)\right\|\\
\leq\!\!\!\!\!\!\!\!&&C(1+\|x_1\|+\|x_2\|+\|y\|)e^{-(\lambda_1-L_g) t}+C\left(\|x_1-x_2\|+\EE\|Y^{x_1,y}_t-Y^{x_2,y}_t\|\right)\\
\leq\!\!\!\!\!\!\!\!&&C(1+\|x_1\|+\|x_2\|+\|y\|)e^{-(\lambda_1-L_g) t}+C\|x_1-x_2\|.
\end{eqnarray*}
Hence, the proof is completed by letting $t\rightarrow \infty$.
\end{proof}

\vspace{0.3cm}
Now, we intend to estimate the difference process $X^{\vare}_t-\hat{X}_{t}^{\vare}$. Similar as the argument in Lemma \ref{DEX}, we further construct a new stopping time, i.e., for any $\vare\in(0,1)$, $R>0$,
\begin{eqnarray*}
\hat {\tau}^{\varepsilon}_{R}:=\!\!\!\!\!\!\!\!&&\inf\Big\{t>0:\|X^{\vare}_t\|+\|\hat X^{\vare}_t\|+\|\bar X_t\|+\int^t_0\frac{\|X^{\vare}_s\|^2_1}{(\|X^{\vare}_s\|^2+1)^{1/2}}ds\\
&&\quad\quad\quad\quad\quad+\int^t_0\frac{\|\bar X_s\|^2_1}{(\|\bar X_s\|^2+1)^{1/2}}ds+\int^t_0\frac{\|\hat X^{\vare}_s\|^2_1}{(\|\hat X^{\vare}_s\|^2+1)^{1/2}}ds\geq R\Big\}.
\end{eqnarray*}
\begin{lemma} \label{ESX}
For any $x\in H^{\theta}$ with $\theta\in(1/2, 1]$, $y\in H$, $1\leq p<\alpha$, $T>0$, $\vare\in (0,1)$ and $R>0$,
there exists a constant $C_{p,T}>0$ such that
\begin{align*}
\mathbb{E}\left(\sup_{0\leq t\leq T\wedge \hat {\tau}^{\varepsilon}_{R}}
\|\hat{X}_{t}^{\vare}-\bar{X}_{t} \|^{p}\right)
\leq
C_{p,T}e^{C_{p,T}R^6}\Big[(\|x\|^p_{\theta}+1)\delta^{\frac{p\theta}{2}}+(1+\|x\|+\|y\|)^{p/2}\frac{\vare^{p/2}}{\delta^{p/2}}\Big].
\end{align*}
\end{lemma}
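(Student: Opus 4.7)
My plan is to apply the classical Khasminskii time-discretization strategy, carried out before the stopping time $\hat{\tau}^{\vare}_{R}$ so that the a priori bounds and interpolation estimates can be invoked freely. From \eqref{AuxiliaryPro X 01} and \eqref{3.22} I write
\begin{align*}
\hat{X}^{\vare}_{t}-\bar{X}_{t}=\int_{0}^{t}e^{(t-s)A}[N(\hat{X}^{\vare}_{s})-N(\bar{X}_{s})]ds+\int_{0}^{t}e^{(t-s)A}[f(X^{\vare}_{s(\delta)},\hat{Y}^{\vare}_{s})-\bar{f}(\bar{X}_{s})]ds,
\end{align*}
and further split $f(X^{\vare}_{s(\delta)},\hat{Y}^{\vare}_{s})-\bar{f}(\bar{X}_{s})=[f(X^{\vare}_{s(\delta)},\hat{Y}^{\vare}_{s})-\bar{f}(X^{\vare}_{s(\delta)})]+[\bar{f}(X^{\vare}_{s(\delta)})-\bar{f}(\bar{X}_{s})]$. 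This isolates three contributions: the nonlinear difference, a Lipschitz $\bar{f}$--difference, and a purely averaging term.

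The nonlinear contribution is handled via \eqref{P3} together with the interpolation $\|u\|_{1/2}^{2}\leq C\|u\|\cdot\|u\|_{1}$. Splitting $\|u\|\cdot\|u\|_{1}$ by $2ab\leq a^{2}+b^{2}$ with $a=\|u\|_{1}/(\|u\|^{2}+1)^{1/4}$ and $b=\|u\|(\|u\|^{2}+1)^{1/4}$, the two integrals built into $\hat{\tau}^{\vare}_{R}$ control the resulting time integrals by a polynomial in $R$ (using $(\|u\|^{2}+1)^{1/2}\leq R+1$ before the stopping time). This turns the nonlinearity into a Gronwall feedback on $\|\hat X^{\vare}_{s}-\bar X_{s}\|$ with $R$-polynomial coefficient. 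The Lipschitz $\bar f$--term I estimate by
\begin{align*}
\|\bar f(X^{\vare}_{s(\delta)})-\bar f(\bar X_{s})\|\leq C\bigl(\|X^{\vare}_{s(\delta)}-X^{\vare}_{s}\|+\|X^{\vare}_{s}-\hat X^{\vare}_{s}\|+\|\hat X^{\vare}_{s}-\bar X_{s}\|\bigr),
\end{align*}
and since $\hat{\tau}^{\vare}_{R}\leq\tilde{\tau}^{\vare}_{R}\leq\tau^{\vare}_{R}$, Lemmas \ref{COX} and \ref{DEX} absorb the first two pieces into $C_{p,T}e^{C_{p,T}R^{6}}(\|x\|_{\theta}^{p}+1)\delta^{p\theta/2}$, while the third one goes into Gronwall.

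The core of the proof is the averaging term
\begin{align*}
J_{t}:=\int_{0}^{t}e^{(t-s)A}[f(X^{\vare}_{s(\delta)},\hat{Y}^{\vare}_{s})-\bar f(X^{\vare}_{s(\delta)})]ds.
\end{align*}
Since $f$ and $\bar f$ are uniformly bounded by condition \ref{A3} and $p<\alpha<2$, Jensen gives $\mathbb{E}\|J_{t}\|^{p}\leq(\mathbb{E}\|J_{t}\|^{2})^{p/2}$, so it suffices to bound $\mathbb{E}\|J_{t}\|^{2}$. I decompose $J_{t}=\sum_{k}J^{(k)}_{t}$ along the grid $\{k\delta\}$; the diagonal $\sum_{k}\mathbb{E}\|J^{(k)}_{t}\|^{2}$ is $O(\delta)$ by boundedness, hence after the $p/2$-Jensen step is dominated by $\delta^{p/2}\leq\delta^{p\theta/2}$ and absorbed into the first summand of the target bound. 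For the cross-terms $k<l$, the tower property and the $\mathcal{F}_{l\delta}$-measurability of $J^{(k)}_{t}$ reduce the task to bounding $\mathbb{E}[J^{(l)}_{t}\mid\mathcal{F}_{l\delta}]$. On $[l\delta,(l+1)\delta]$ the coefficient $X^{\vare}_{s(\delta)}$ is frozen at $X^{\vare}_{l\delta}$, and the time change $u=(s-l\delta)/\vare$ together with the self-similarity of $Z$ identifies the conditional law of $\hat Y^{\vare}_{s}$ with that of the frozen process $Y_{u}^{X^{\vare}_{l\delta},\hat Y^{\vare}_{l\delta}}$ driven by an independent cylindrical $\alpha$-stable noise. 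Proposition \ref{ergodicity} then supplies
\begin{align*}
\bigl\|\mathbb{E}[f(X^{\vare}_{l\delta},\hat Y^{\vare}_{s})-\bar f(X^{\vare}_{l\delta})\mid\mathcal{F}_{l\delta}]\bigr\|\leq C(1+\|X^{\vare}_{l\delta}\|+\|\hat Y^{\vare}_{l\delta}\|)e^{-(\lambda_{1}-L_{g})(s-l\delta)/\vare}.
\end{align*}
Integrating in $s$ produces an extra factor $\vare$; summing the $O((T/\delta)^{2})$ cross-pairs and invoking Lemmas \ref{PMY} and \ref{MDY} for the first moments of $X^{\vare}_{l\delta}$ and $\hat Y^{\vare}_{l\delta}$ gives $\mathbb{E}\|J_{t}\|^{2}\leq C_{T}(1+\|x\|+\|y\|)\vare/\delta$, whence $\mathbb{E}\|J_{t}\|^{p}\leq C_{p,T}(1+\|x\|+\|y\|)^{p/2}(\vare/\delta)^{p/2}$.

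Setting $u(t):=\mathbb{E}[\sup_{0\leq r\leq t\wedge\hat{\tau}^{\vare}_{R}}\|\hat X^{\vare}_{r}-\bar X_{r}\|^{p}]$, collecting the three pieces yields $u(t)\leq A(\vare,\delta,R)+B(R)\int_{0}^{t}u(s)ds$ with $A(\vare,\delta,R)$ of the stated form and $B(R)$ polynomial in $R$; Gronwall absorbs $B(R)$ into the $e^{C_{p,T}R^{6}}$ already present in $A$ (inherited from Lemmas \ref{COX} and \ref{DEX}), yielding the claim. The principal obstacle is the averaging step: the conditional Markov and time-change manipulations must be handled with care because Remark \ref{Re1} only guarantees that $\hat Y^{\vare}$ takes values in $H$ pointwise, not as an $H$-valued c\`adl\`ag process, forcing all manipulations to stay within the integrated-ergodicity framework of Proposition \ref{ergodicity}. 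The passage $L^{2}\to L^{p}$ via Jensen is unavoidable because the $\alpha$-stable driver admits only moments of order $<\alpha<2$, and it succeeds only thanks to the uniform boundedness of $f$ from condition \ref{A3}.
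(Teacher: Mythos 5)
Your decomposition and the supporting machinery are essentially the paper's: the split into the nonlinear difference (controlled via \eref{P3}, interpolation, and the integrals built into $\hat{\tau}^{\vare}_{R}$, feeding a Gronwall argument with an $R$-polynomial exponent that is swallowed by $e^{C_{p,T}R^{6}}$), the Lipschitz $\bar f$-differences (absorbed through Lemmas \ref{COX} and \ref{DEX} using $\hat{\tau}^{\vare}_{R}\leq\tilde{\tau}^{\vare}_{R}\leq\tau^{\vare}_{R}$), and the averaging term treated by the Jensen step $\EE\|J_t\|^{p}\leq(\EE\|J_t\|^{2})^{p/2}$, the time change, the identification of $\hat Y^{\vare}_{s+k\delta}$ with the frozen process $Y^{X^{\vare}_{k\delta},\hat Y^{\vare}_{k\delta}}_{s/\vare}$, and Proposition \ref{ergodicity}. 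The regrouping of the paper's $J_{2},J_{3},J_{4}$ into your three contributions is cosmetic, and the diagonal term $O(\delta)$ being dominated by $\delta^{p\theta/2}$ is correct since $\theta\leq 1$.

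There is, however, one genuine gap, and it sits in the key averaging estimate. Your computation of $\EE\|J_t\|^{2}$ via diagonal terms plus cross-block terms $k<l$ (tower property, $\mathcal{F}_{l\delta}$-measurability of $J^{(k)}_{t}$, exponential ergodicity of the conditional expectation of $J^{(l)}_{t}$) is valid for each \emph{fixed} $t$, but the Gronwall inequality you set up for $u(t)=\EE[\sup_{r\leq t\wedge\hat{\tau}^{\vare}_{R}}\|\hat X^{\vare}_{r}-\bar X_{r}\|^{p}]$ requires $\EE\sup_{0\leq t\leq T}\|J_t\|^{p}$ as the additive term, which is what the lemma's conclusion (a supremum inside the expectation) demands. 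Your cross-block cancellation does not survive the supremum: replacing $\sup_t\|\sum_k J^{(k)}_{t}\|$ by a triangle-inequality bound destroys exactly the signed cancellation that the conditioning argument exploits, and no maximal inequality preserving it is available here. The paper avoids this by giving up cross-block cancellation entirely: it factors $e^{(t-s)A}=e^{(t-(k+1)\delta)A}e^{((k+1)\delta-s)A}$, uses the contraction property and Cauchy--Schwarz to bound $\sup_{t}\|J_{2,1}(t)\|^{2}$ by $[T/\delta]\sum_{k}\|\tilde J^{(k)}\|^{2}$ with $\tilde J^{(k)}$ independent of $t$, and then recovers the factor $\vare/\delta$ from the \emph{within-block} correlation decay $\Psi_{k}(s,r)\leq C_{T}(1+\|x\|+\|y\|)e^{-(\lambda_1-L_g)(s-r)}$, obtained by applying the Markov property of the frozen process between two times in the same block. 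To close your proof you would need to replace the cross-block computation by this within-block one (or otherwise justify passing the supremum inside the expectation).
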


\begin{proof}
From \eqref{AuxiliaryPro X 01} and \eqref{3.22}, it is easy to see
\begin{eqnarray} \label{F3.19}
\hat{X}_{t}^{\vare}-\bar{X}_{t}
=\!\!\!\!\!\!\!\!&&\int_{0}^{t}e^{(t-s)A}\left[N(\hat{X}_{s}^{\vare})-N(\bar{X}_{s})\right]ds
+\int_{0}^{t}e^{(t-s)A}
\left[f(X_{s(\delta)}^{\vare},\hat{Y}_{s}^{\vare})-\bar{f}(X_{s}^{\vare})\right]ds \nonumber\\
\!\!\!\!\!\!\!\!&&
+\int_{0}^{t}e^{(t-s)A}\left[\bar{f}(X_{s}^{\vare})-\bar{f}(\hat{X}_{s}^{\vare})\right]ds
+\int_{0}^{t}e^{(t-s)A}\left[\bar{f}(\hat{X}_{s}^{\vare})-\bar{f}(\bar{X}_{s})\right]ds   \nonumber\\
:=\!\!\!\!\!\!\!\!&& \sum_{k=1}^{4}J_{k}(t).
\end{eqnarray}

For $J_{1}(t)$, according to \eref{P3}, we have
\begin{eqnarray}  \label{J1}
\sup_{0\leq t\leq T} \|J_{1}(t) \|
\leq\!\!\!\!\!\!\!\!&&C\int_{0}^{T}\big(1+\|\bar X_s\|^2_{1}+\|\hat X^{\vare}_{s}\|^2_{1}\big)\|\hat X^{\vare}_s-\bar X_{s}\|ds.
\end{eqnarray}

For $J_{3}(t)$ and $J_{4}(t)$, by the Lipschitz continuity of $\bar{f}$, we obtain
\begin{eqnarray}  \label{J3}
\sup_{0\leq t\leq T}\|J_{3}(t)\|
\leq\!\!\!\!\!\!\!\!&& C
\int_{0}^{T} \| X_{s}^{\varepsilon}-\hat{X}^{\vare}_{s}\|ds
\end{eqnarray}
and
\begin{eqnarray}  \label{J4}
\sup_{0\leq t\leq T}\|J_{4}(t)\|\leq
C\int_{0}^{T}\|\hat{X}^{\vare}_{s}-\bar{X}_{s}\|ds.
\end{eqnarray}
Then by \eref{F3.19} to \eref{J4}, we have
\begin{eqnarray*}
\sup_{0\leq t\leq T}\|\hat{X}_{t}^{\vare}-\bar{X}_{t}\|
\leq\!\!\!\!\!\!\!\!&&C\int_{0}^{T}\big(1+\|\bar X_s\|^2_{1}+\|\hat X^{\vare}_{s}\|^2_{1}\big)\|\hat X^{\vare}_s-\bar X_{s}\|ds\nonumber\\
\!\!\!\!\!\!\!\!&&+ \sup_{0\leq t\leq T}\|J_{2}(t)\|+C\int_{0}^{T} \| X_{s}^{\varepsilon}-\hat{X}^{\vare}_{s}\|ds.
\end{eqnarray*}
The Gronwall's inequality and the definition of $\hat {\tau}^{\varepsilon}_{R}$ imply
\begin{eqnarray*}
\sup_{0\leq t\leq T\wedge \hat {\tau}^{\varepsilon}_{R}}\|\hat{X}_{t}^{\vare}-\bar{X}_{t}\|
\leq \!\!\!\!\!\!\!\!&&\left[\sup_{0\leq t\leq T\wedge \hat {\tau}^{\varepsilon}_{R}}\|J_{2}(t)\|+C\int_{0}^{T\wedge \hat {\tau}^{\varepsilon}_{R}} \| X_{s}^{\varepsilon}-\hat{X}_{s}\|ds\right]e^{\int_{0}^{T\wedge \hat {\tau}^{\varepsilon}_{R}}\big(1+\|\bar X_s\|^2_{1}+\|\hat X^{\vare}_{s}\|^2_{1}\big)ds}\\
\leq\!\!\!\!\!\!\!\!&& C_{T}e^{C_T R^2}\left[\sup_{0\leq t\leq T\wedge \hat {\tau}^{\varepsilon}_{R}}\|J_{2}(t)\|+\int_{0}^{T\wedge \hat {\tau}^{\varepsilon}_{R}} \| X_{s}^{\varepsilon}-\hat{X}_{s}\|ds\right].
\end{eqnarray*}
Note that $\hat {\tau}^{\varepsilon}_{R}\leq \tilde{\tau}^{\varepsilon}_{R}$ and by Lemma \ref{DEX}, we obtain
\begin{eqnarray}
\EE\left(\sup_{0\leq t\leq T\wedge \hat {\tau}^{\varepsilon}_{R}}\|\hat{X}_{t}^{\vare}-\bar{X}_{t}\|^p\right)
\leq\!\!\!\!\!\!\!\!&&C_{p,T}e^{C_{p,T}R^2}\EE\left[\sup_{0\leq t\leq T\wedge \hat {\tau}^{\varepsilon}_{R}}\|J_{2}(t)\|^p\right]\nonumber\\
\!\!\!\!\!\!\!\!&&+C_{p,T}e^{C_{p,T}R^6}(\|x\|^p_{\theta}+1)\delta^{\frac{p\theta}{2}}.\label{F3.23}
\end{eqnarray}

Now, it is remain to estimate $J_{2}(t)$. Set $n_{t}=[\frac{t}{\delta}]$, we write
\begin{align*}
J_{2}(t)=J_{2,1}(t)+J_{2,2}(t)+J_{2,3}(t),
\end{align*}
where
\begin{align*}
J_{2,1}(t)=\sum_{k=0}^{n_{t}-1}
\int_{k\delta}^{(k+1)\delta}e^{(t-s)A}\left[f(X_{k\delta}^{\varepsilon},\hat{Y}_{s}^{\varepsilon})-\bar{f}(X_{k\delta}^{\varepsilon})\right]ds,
\end{align*}
\begin{align*}
J_{2,2}(t)=\sum_{k=0}^{n_{t}-1}
\int_{k\delta}^{(k+1)\delta}e^{(t-s)A}\left[\bar{f}(X_{k\delta}^{\varepsilon})-\bar{f}(X_{s}^{\varepsilon})\right]ds,
\end{align*}
\begin{align*}
J_{2,3}(t)=
\int_{n_{t}\delta}^{t}e^{(t-s)A}\left[f(X_{n_{t}\delta}^{\varepsilon},\hat{Y}_{s}^{\varepsilon})-\bar{f}(X_{s}^{\varepsilon})\right]ds.
\end{align*}

For $J_{2,2}(t)$, we have
\begin{align}   \label{J42}
\sup_{0\leq t\leq T\wedge \tilde{\tau}^{\varepsilon}_{R}} \|J_{2,2}(t) \|
\leq C\int_{0}^{T\wedge \tilde{\tau}^{\varepsilon}_{R}}\|X_{s(\delta)}^{\varepsilon}-X_{s}^{\varepsilon} \|ds.
\end{align}

For $J_{2,3}(t)$, it follows from the boundness of $f$,
\begin{eqnarray}  \label{J43}
\sup_{0\leq t\leq T\wedge \tilde{\tau}^{\varepsilon}_{R}} \| J_{2,3}(t) \|\leq\delta.
\end{eqnarray}

For $J_{2,1}(t)$,
from the construction of $\hat{Y}_{t}^{\varepsilon}$,
we obtain that, for any $k\in \mathbb{N}_{\ast}$ and $s\in[0,\delta)$,
\begin{eqnarray}
\hat{Y}_{s+k\delta}^{\varepsilon}
=\!\!\!\!\!\!\!\!&&\hat Y_{k\delta}^{\varepsilon}+\frac{1}{\varepsilon}\int_{k\delta}^{k\delta+s}A\hat{Y}_{r}^{\varepsilon}dr
+\frac{1}{\varepsilon}\int_{k\delta}^{k\delta+s}g(X_{k\delta}^{\varepsilon},\hat{Y}_{r}^{\varepsilon})dr
+\frac{1}{\varepsilon^{1/\alpha}}\int_{k\delta}^{k\delta+s}dZ_r   \nonumber\\
=\!\!\!\!\!\!\!\!&&\hat Y_{k\delta}^{\varepsilon}+\frac{1}{\varepsilon}\int_{0}^{s}A\hat{Y}_{r+k\delta}^{\varepsilon}dr
+\frac{1}{\varepsilon}\int_{0}^{s}g(X_{k\delta}^{\varepsilon},\hat{Y}_{r+k\delta}^{\varepsilon})dr
+\frac{1}{\varepsilon^{1/\alpha}}\int_{0}^{s}d Z_{k\delta}(r) ,\label{E3.26}
\end{eqnarray}
where $Z_{k\delta}(t):=Z_{t+k\delta}-Z_{k\delta}$ is the shift version of $Z_t$, which is also a cylindrical $\alpha$-stable process.

Recall that $\bar{Z}_t$ be a cylindrical $\alpha$-stable process
which is independent of $(X_{k\delta}^{\varepsilon},\hat Y_{k\delta}^{\varepsilon})$.
We construct a process $Y^{X_{k\delta}^{\varepsilon},\hat Y_{k\delta}^{\varepsilon}}_t$ by means of $Y^{x,y}_t \mid_{(x,y)=(X_{k\delta}^{\varepsilon},\hat Y_{k\delta}^{\varepsilon})}$, i.e.,
\begin{eqnarray}
Y_{\frac{s}{\varepsilon}}^{X_{k\delta}^{\varepsilon},\hat Y_{k\delta}^{\varepsilon}}=\!\!\!\!\!\!\!\!&&\hat Y_{k\delta}^{\varepsilon}
+\int_{0}^{\frac{s}{\varepsilon}}AY_{r}^{X_{k\delta}^{\varepsilon},\hat Y_{k\delta}^{\varepsilon}}dr
+\int_{0}^{\frac{s}{\varepsilon}}g(X_{k\delta}^{\varepsilon}, Y_{r}^{X_{k\delta}^{\varepsilon},\hat Y_{k\delta}^{\varepsilon}})dr
+\int_{0}^{\frac{s}{\varepsilon}}d\bar{Z}_r  \nonumber\\
=\!\!\!\!\!\!\!\!&&\hat Y_{k\delta}^{\varepsilon}
+\frac{1}{\varepsilon}\int_{0}^{s}AY_{\frac{r}{\varepsilon}}^{X_{k\delta}^{\varepsilon},\hat Y_{k\delta}^{\varepsilon}}dr
+\frac{1}{\varepsilon}\int_{0}^{s}g(X_{k\delta}^{\varepsilon},Y_{\frac{r}{\varepsilon}}^{X_{k\delta}^{\varepsilon},\hat Y_{k\delta}^{\varepsilon}})dr
+\frac{1}{\varepsilon^{1/ \alpha}}\int_{0}^{s}d\hat Z_r,  \label{E3.27}
\end{eqnarray}
where $\hat Z_t:=\vare^{1/ \alpha}\bar{Z}_{\frac{t}{\vare}}$ is again a cylindrical $\alpha$-stable process by self-similar property of stable L\'evy processes. Then the uniqueness of the solution to Eq. \eref{E3.26} and Eq. \eref{E3.27} implies
that the distribution of $(X_{k\delta}^{\varepsilon},\hat{Y}^{\vare}_{s+k\delta})_{0\leq s\leq \delta}$
coincides with the distribution of
$(X_{k\delta}^{\varepsilon},
Y_{\frac{s}{\varepsilon}}^{X_{k\delta}^{\varepsilon},\hat Y_{k\delta}^{\varepsilon}})_{0\leq s\leq \delta}$.

Then we try to control  $\|J_{2,1}(t)\|$:
\begin{eqnarray}
\!\!\!\!\!\!\!\!&& \EE\left(\sup_{0\leq t\leq T} \| J_{2,1}(t) \|^{2}\right) \nonumber\\
=\!\!\!\!\!\!\!\!&&
\EE\sup_{0\leq t\leq T}\Big \|\sum_{k=0}^{n_{t}-1}e^{(t-(k+1)\delta)A}
\int_{k\delta}^{(k+1)\delta}e^{((k+1)\delta-s)A}
\left[f(X_{k\delta}^{\varepsilon},\hat{Y}_{s}^{\varepsilon})
- \bar{f}(X_{k\delta}^{\varepsilon})\right]ds\Big \|^{2}
\nonumber\\
\leq\!\!\!\!\!\!\!\!&& \EE\sup_{0\leq t\leq T}
\left\{n_{t}\sum_{k=0}^{n_{t}-1}\Big \|\int_{k\delta}^{(k+1)\delta}
e^{((k+1)\delta-s)A}\left[f(X_{k\delta}^{\varepsilon},\hat{Y}_{s}^{\varepsilon})-\bar{f}(X_{k\delta}^{\varepsilon})\right]ds\Big \|^{2}\right\}
\nonumber\\
\leq\!\!\!\!\!\!\!\!&&
[\frac{T}{\delta}]
\sum_{k=0}^{[\frac{T}{\delta}]-1}
\mathbb{E} \Big \|\int_{k\delta}^{(k+1)\delta}
e^{((k+1)\delta-s)A}\left[f(X_{k\delta}^{\varepsilon},\hat{Y}_{s}^{\varepsilon})-\bar{f}(X_{k\delta}^{\varepsilon})\right]ds\Big\|^{2}
\nonumber\\
\leq\!\!\!\!\!\!\!\!&&
\frac{C_{T}}{\delta^{2}}\max_{0\leq k\leq[\frac{T}{\delta}]-1}\mathbb{E}
\Big \| \int_{k\delta}^{(k+1)\delta}
e^{((k+1)\delta-s)A}\left[f(X_{k\delta}^{\varepsilon},\hat{Y}_{s}^{\varepsilon})-\bar{f}(X_{k\delta}^{\varepsilon})\right]ds \Big \|^{2}  \nonumber\\
=\!\!\!\!\!\!\!\!&&
C_{T}\frac{\vare^{2}}{\delta^{2}}\max_{0\leq k\leq[\frac{T}{\delta}]-1}
\mathbb{E}
\Big\| \int_{0}^{\frac{\delta}{\varepsilon}}
e^{(\delta-s\varepsilon)A}
\left[f(X_{k\delta}^{\varepsilon},\hat{Y}_{s\varepsilon+k\delta}^{\varepsilon})-\bar{f}(X_{k\delta}^{\varepsilon})\right]ds\Big\|^{2}  \nonumber\\
=\!\!\!\!\!\!\!\!&&C_{T}\frac{\varepsilon^{2}}{\delta^{2}}\max_{0\leq k\leq[\frac{T}{\delta}]-1}\int_{0}^{\frac{\delta}{\varepsilon}}
\int_{r}^{\frac{\delta}{\varepsilon}}\Psi_{k}(s,r)dsdr,  \nonumber
\end{eqnarray}
where
\begin{eqnarray}
\Psi_{k}(s,r)=\!\!\!\!\!\!\!\!&&\mathbb{E}\left\langle e^{(\delta-s\varepsilon)A}
\big(f(X_{k\delta}^{\varepsilon},\hat{Y}_{s\varepsilon+k\delta}^{\varepsilon})-\bar{f}(X_{k\delta}^{\varepsilon})\big), e^{(\delta-r\varepsilon)A}
\big(f(X_{k\delta}^{\varepsilon},\hat{Y}_{r\varepsilon+k\delta}^{\varepsilon})-\bar{f}(X_{k\delta}^{\varepsilon})\big)\right\rangle  \nonumber\\
=\!\!\!\!\!\!\!\!&&\mathbb{E}\left\langle e^{(\delta-s\varepsilon)A}
\big(f(X_{k\delta}^{\varepsilon},Y_{s}^{X_{k\delta}^{\varepsilon},\hat Y_{k\delta}^{\varepsilon}})-\bar{f}(X_{k\delta}^{\varepsilon})\big), e^{(\delta-r\varepsilon)A}\big(f(X_{k\delta}^{\varepsilon}, Y_{r}^{X_{k\delta}^{\varepsilon},\hat Y_{k\delta}^{\varepsilon}})-\bar{f}(X_{k\delta}^{\varepsilon})\big)\right\rangle.  \nonumber
\end{eqnarray}
Now, let's estimate $\Psi_{k}(s,r)$. Define
$$
\mathcal{F}_s:=\sigma\{ Y_{u}^{x,y},u\leq s\}.
$$
By the Markov property, Proposition \ref{ergodicity} and condition \ref{A3}, we have for $s>r$,
\begin{eqnarray*}
\Psi_{k}(s,r)=\!\!\!\!\!\!\!\!&&\mathbb{E}\left[\EE\left\langle e^{(\delta-s\varepsilon)A}
\big(f(x,Y_{s}^{x,y})-\bar{f}(x)\big), e^{(\delta-r\varepsilon)A}\big(f(x, Y_{r}^{x,y})-\bar{f}(x)\big)\right\rangle \mid_{(x,y)=(X_{k\delta}^{\varepsilon},\hat{Y}^{\vare}_{s+k\delta})}\right]  \nonumber\\
=\!\!\!\!\!\!\!\!&&\mathbb{E}\left\{\EE\left[\left\langle e^{(\delta-s\varepsilon)A}
\EE \big[f(x,Y_{s}^{x,y})-\bar{f}(x)\mid \mathcal{F}_{r}\big],\right.\right.\right.\\
 &&\quad\quad\quad \left.\left.\left.e^{(\delta-r\varepsilon)A}(f(x, Y_{r}^{x,y})-\bar{f}(x)\big)\right\rangle\right]\mid_{(x,y)=(X_{k\delta}^{\varepsilon},\hat{Y}^{\vare}_{s+k\delta})}\right\}\nonumber\\
\leq\!\!\!\!\!\!\!\!&&C\mathbb{E}\left\{\EE\left[\|\EE f(x,Y_{s-r}^{x,z})-\bar{f}(x)\|\mid_{z=Y_{r}^{x,y}}\right]\mid_{(x,y)=(X_{k\delta}^{\varepsilon},\hat Y_{k\delta}^{\varepsilon})}\right\}\nonumber\\
\leq\!\!\!\!\!\!\!\!&&C\mathbb{E}\left[\EE(1+\|x\|+\|Y^{x,y}_{r}\|)e^{-(\lambda_1-L_g)(s-r)}\mid_{(x,y)=(X_{k\delta}^{\varepsilon},\hat Y_{k\delta}^{\varepsilon})}\right]\nonumber\\
=\!\!\!\!\!\!\!\!&&C\mathbb{E}\left[(1+\|X_{k\delta}^{\varepsilon}\|+\|Y^{X_{k\delta}^{\varepsilon},\hat Y_{k\delta}^{\varepsilon}}_{r}\|)e^{-(\lambda_1-L_g)(s-r)}\right]\nonumber\\
=\!\!\!\!\!\!\!\!&&C\mathbb{E}\left(1+\|X_{k\delta}^{\varepsilon}\|+\|\hat Y^{\vare}_{k\delta+\varepsilon r}\|\right)e^{-(\lambda_1-L_g)(s-r)}\nonumber\\
\leq\!\!\!\!\!\!\!\!&&C_T(1+ \|x \|+ \|y\|)e^{-(\lambda_1-L_g)(s-r)},
\end{eqnarray*}
where the last inequality comes from Lemmas \ref{PMY} and \ref{MDY}. As a consequence,
\begin{eqnarray}  \label{J412}
\mathbb{E}\left(\sup_{0\leq t\leq T} \| J_{2,1}(t) \|^{2}\right)
\leq\!\!\!\!\!\!\!\!&&
C_{T}\frac{\varepsilon^{2}}{\delta^{2}}(1 + \| x \| + \| y \|)
\int_{0}^{\frac{\delta}{\varepsilon}}\int_{r}^{\frac{\delta}{\varepsilon}}e^{-(\lambda_1-L_g)(s-r)}dsdr    \nonumber\\
\leq\!\!\!\!\!\!\!\!&&C_{T}\frac{\varepsilon}{\delta}(1+ \| x \|+ \| y \|).
\end{eqnarray}
This, together with \eref{J42}, \eref{J43}, \eref{J412} and Lemma \ref{COX}, we get
\begin{eqnarray}
\mathbb{E}\left(\sup_{0\leq t\leq T\wedge \hat{\tau}^{\varepsilon}_{R}} \| J_{2}(t) \|^{p}\right)
\leq\!\!\!\!\!\!\!\!&&C_{p,T}e^{C_{p,T}R^2}\Big[(\|x\|^p_{\theta}+1)\delta^{\frac{p\theta}{2}}+(1+\|x\|+\|y\|)^{p/2}\frac{\vare^{p/2}}{\delta^{p/2}}\Big].  \label{J2}
\end{eqnarray}
According to the estimates (\ref{F3.23}) and (\ref{J2}), we obtain
\begin{eqnarray*}
\mathbb{E}\Big(\sup_{0\leq t\leq  T\wedge \hat{\tau}^{\varepsilon}_{R}}
\| \hat{X}_{t}^{\vare}-\bar{X}_{t}\|^{p}\Big)
\leq \!\!\!\!\!\!\!\!&&
C_{p,T}e^{C_{p,T}R^6}\Big[(\|x\|^p_{\theta}+1)\delta^{\frac{p\theta}{2}}+\delta^p+(1+\|x\|+\|y\|)^{p/2}\frac{\vare^{p/2}}{\delta^{p/2}}\Big].
\end{eqnarray*}
The proof is complete.
\end{proof}

\subsection{Proof of Theorem \ref{main result 1}} \label{sub 3.4}

\begin{proof}
 By Lemmas \ref{PMY}, \ref{DEX}, \ref{barX} and \ref{ESX}, we have for any $1\leq p<p'<\alpha$,
\begin{eqnarray}
&&\mathbb{E}\left(\sup_{0\leq t\leq T} \| X_{t}^{\vare}-\bar{X}_{t}\|^{p}\right)\nonumber\\
\leq\!\!\!\!\!\!\!\!&&
\mathbb{E}\left(\sup_{0\leq t\leq T\wedge \hat{\tau}^{\varepsilon}_{R}} \| X_{t}^{\vare}-\bar{X}_{t} \|^p\right)
+
\mathbb{E}\left(\sup_{0\leq t\leq T} \| X_{t}^{\vare}-\bar{X}_{t} \|^p 1_{\{T> \hat{\tau}^{\vare}_R\}}\right)\nonumber\\
\leq\!\!\!\!\!\!\!\!&&
\mathbb{E}\left(\sup_{0\leq t\leq T\wedge \hat{\tau}^{\varepsilon}_{R}} \|X_{t}^{\vare}-\hat {X}^{\vare}_{t}\|^p\right)
+
\mathbb{E}\left(\sup_{0\leq t\leq T\wedge \hat{\tau}^{\varepsilon}_{R}} \|\hat{X}_{t}^{\vare}-\bar{X}_{t}\|^p\right)\nonumber\\
\!\!\!\!\!\!\!\!&&
+
C\left[\mathbb{E}\sup_{0\leq t\leq T} \left(\| X_{t}^{\vare}\|^{p'}+\|\bar{X}_{t} \|^{p'}\right)\right]^{p/p'}\left[\mathbb{P}(T> \hat{\tau}^{\varepsilon}_{R})\right]^{(p'-p)/p'}\nonumber\\
\leq\!\!\!\!\!\!\!\!&&
C_{p,T}e^{C_{p,T}R^6}\Big[(\|x\|_{\theta}+1)^p \delta^{\frac{p\theta}{2}}
+\delta^p+(1+\|x\|+\|y\|)^{p/2}\frac{\vare^{p/2}}{\delta^{p/2}}\Big]+\frac{C_{p,p',T}(1+\|x\|)}{R^{(p'-p)/p'}}, \label{F3.29}
\end{eqnarray}
where the last inequality comes from the chebyshev's inequality. Indeed, by Lemmas \ref{PMY}, \ref{MDY} and \ref{barX}, we get
\begin{eqnarray*}
\mathbb{P}(T> \hat{\tau}^{\varepsilon}_{R})\leq\!\!\!\!\!\!\!\!&&\EE\left[\sup_{0\leq t\leq T}\|X^{\vare}_t\|+\int^T_0\frac{\|X^{\vare}_s\|^2_1}{(\|X^{\vare}_s\|^2+1)^{1/2}}ds\right]/R\\
+\!\!\!\!\!\!\!\!&&\EE\left[\sup_{0\leq t\leq T}\|\hat X^{\vare}_t\|+\int^T_0\frac{\|\hat X^{\vare}_s\|^2_1}{(\|\hat X^{\vare}_s\|^2+1)^{1/2}}ds\right]/R\\
+\!\!\!\!\!\!\!\!&&\EE\left[\sup_{0\leq t\leq T}\|\bar X_t\|+\int^T_0\frac{\|\bar X_s\|^2_1}{(\|\bar X_s\|^2+1)^{1/2}}ds\right]/R\\
\leq\!\!\!\!\!\!\!\!&&\frac{C_T(1+\|x\|)}{R}.
\end{eqnarray*}

Now, taking $\delta=\vare^{\frac{1}{\theta+1}}$ and $R=\left(\frac{\theta p}{4(\theta+1)C_{p,T}}\ln{1/\vare}\right)^{1/6}$, we have
\begin{eqnarray*}
\mathbb{E}\left(\sup_{0\leq t\leq T}\|X_{t}^{\vare}-\bar{X}_{t}\|^p\right)\leq \frac{C_{p,p',T}(1+\|x\|^p_{\theta}+\|y\|^{p/2})}{\left(-\ln \vare\right)^{\frac{p'-p}{6p'}}},
\end{eqnarray*}
which implies for any $k<\frac{\alpha-p}{6\alpha}$,
\begin{eqnarray*}
\mathbb{E}\left(\sup_{0\leq t\leq T}\|X_{t}^{\vare}-\bar{X}_{t}\|^p\right)\leq\frac{C_{p,k,T}(1+\|x\|^p_{\theta}+\|y\|^{p/2})}{\left(-\ln \vare\right)^{k}}.\label{L1}
\end{eqnarray*}
The proof is complete.
\end{proof}

\section{Appendix} \label{Sec appendix}

\subsection{Some estimates about the nonlinearity $N$}
The proof of \eref{P3}-\eref{P5} can be founded in \cite[Appendix]{Xu1} and we show \eref{P2} here.

By the Sobolev embedding theorem, for any $\sigma\in [0, 1/2)$,
\begin{eqnarray*}
|\langle N(x), y\rangle|=\!\!\!\!\!\!\!\!&&|\int_{\mathbb{T}}x(\xi)y(\xi)d\xi-\int_{\mathbb{T}}x^3(\xi)y(\xi)d\xi|\\
\leq\!\!\!\!\!\!\!\!&&\|x\|\|y\|+\left[\int_{\mathbb{T}} |x(\xi)|^{\frac{6}{2\sigma+1}}d\xi\right]^{\frac{2\sigma+1}{2}}\left[\int_{\mathbb{T}} |y(\xi)|^{\frac{2}{1-2\sigma}}d\xi\right]^{\frac{1-2\sigma}{2}}\\
=\!\!\!\!\!\!\!\!&&\|x\|\|y\|+\|x\|^{3}_{L^{\frac{6}{2\sigma+1}}(\mathbb{T})}\|y\|_{L^{\frac{2}{1-2\sigma}}(\mathbb{T})}\\
\leq\!\!\!\!\!\!\!\!&&\|x\|\|y\|+C\|x\|^{3}_{\frac{1-\sigma}{3}}\|y\|_{\sigma}\\
\leq\!\!\!\!\!\!\!\!&&C\left(1+\|x\|^3_{\frac{1-\sigma}{3}}\right)\|y\|_{\sigma},
\end{eqnarray*}
which implies
$$
\|N(x)\|_{-\sigma}\leq C\left(1+\|x\|^3_{\frac{1-\sigma}{3}}\right).
$$
\subsection{The existence and uniqueness of solution of system \eref{main equation}} Fix $\varepsilon>0$, for all $\omega\in\Omega$, define
$$
W^{\varepsilon}_t(\omega):=X^{\vare}_{t}(\omega)-L_A(t,\omega),\quad V^{\varepsilon}_t(\omega):=Y^{\vare}_{t}(\omega)-Z^{\varepsilon}_A(t,\omega),
$$
where $L_A(t)=\int^t_0 e^{(t-s)A}d L_s$ and $Z^{\varepsilon}_A(t):=\frac{1}{\vare^{1/\alpha}}\int^t_0 e^{(t-s)A/\vare}d Z_s$. Then
\begin{equation}\left\{\begin{array}{l}\label{A4.1}
\displaystyle
\partial_t W^{\varepsilon}_t= A W^{\varepsilon}_t+N(W^{\varepsilon}_t+L_A(t))+f(W^{\varepsilon}_t+L_A(t),V^{\varepsilon}_t+Z^{\varepsilon}_A(t)),\quad W^{\varepsilon}_0=x \\
\partial_t V^{\varepsilon}_t= \frac{1}{\varepsilon}\left[A V^{\varepsilon}_t+g(W^{\varepsilon}_t+L_A(t),V^{\varepsilon}_t+Z^{\varepsilon}_A(t))\right],\quad V^{\varepsilon}_0=y.\end{array}\right.
\end{equation}
For each $T>0$, define
$$
K^{\vare}_T(\omega):=\sup_{t\leq T}\|L_A(t,\omega)\|_{1}+\int^T_0\|Z^{\vare}_{A}(t,\omega)\|dt.
$$
By Remark \ref{Re2}, for every $k\in \mathbb{N}$, there exists some set $N^{\vare}_k$ such that $\PP(N^{\vare}_k)=0$ and
$$
K^{\vare}_k(\omega)<\infty,\quad \omega\not\in N^{\vare}_k.
$$
Define $N^{\vare}=\cup_{k\geq 1}N^{\vare}_k$, it is easy to see $\PP(N^{\vare})=0$ and that for all $T>0$
$$
K^{\vare}_T(\omega)<\infty,\quad \omega\not\in N^{\vare}.
$$

\begin{lemma} \label{AL4.1} Assume the conditions \ref{A1}-\ref{A3} hold. Then the following statements hold.\\
(i) For every $\varepsilon>0$, $x\in H$, $y\in H$ and $\omega\not\in N^{\vare}$, there exists some $0<T(\omega)<1$, depending on $\|x\|$ and $K^{\vare}_1(\omega)$, such that system \eref{A4.1} admits a unique solution $W^{\vare}.(\omega)\in C([0,T];H)$ and $V^{\vare}.(\omega)\in C([0,T];H)$ satisfying for all $\sigma\in[1/6,1/2]$,
\begin{eqnarray*}
\|W^{\vare}_t\|_{2\sigma}\leq C(t^{-\sigma}+1),
\end{eqnarray*}
where $C$ is some constant depending on $\|x\|$, $\sigma$ and $K^{\vare}_1(\omega)$.\\
(ii) Let $\sigma\in [1/6, 1/2]$. For every $\varepsilon>0$, $x\in H^{2\sigma}$, $y\in H$ and $\omega\not\in N^{\vare}$, there exists some $0<\tilde T(\omega)<1$, depending on $\|x\|$, $\sigma$ and $K^{\vare}_1(\omega)$, such that system \eref{A4.1} admits a unique solution $W^{\vare}.(\omega)\in C([0,\tilde T]; H^{2\sigma})$ and $V^{\vare}.(\omega)\in C([0,\tilde T];H)$ satisfying
$$
\sup_{0\leq t\leq \tilde T}\|W^{\vare}_t\|_{2\sigma}\leq 1+\|x\|_{2\sigma}.
$$
\end{lemma}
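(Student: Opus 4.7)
The plan is to run a pathwise Banach fixed point argument on the mild formulation of \eref{A4.1}, treating $L_A(\cdot,\omega)$ and $Z^\varepsilon_A(\cdot,\omega)$ as fixed continuous inputs whose norms are controlled by $K^\varepsilon_T(\omega)<\infty$ for every $\omega\notin N^\varepsilon$. Define the solution map $\Phi(W,V)=(\tilde W,\tilde V)$ by $\tilde W_t = e^{tA}x+\int_0^t e^{(t-s)A}[N(W_s+L_A(s))+f(W_s+L_A(s),V_s+Z^\varepsilon_A(s))]\,ds$ and $\tilde V_t = e^{tA/\varepsilon}y+\frac{1}{\varepsilon}\int_0^t e^{(t-s)A/\varepsilon}g(W_s+L_A(s),V_s+Z^\varepsilon_A(s))\,ds$. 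The goal is to show that for $T=T(\omega)$ small, $\Phi$ is a strict contraction on a suitable ball, which produces the unique local solution of \eref{A4.1} and, as a byproduct, the claimed regularity bounds.

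For part (i), because $x$ lies only in $H$, I would work in the weighted space $X_T:=\{(W,V)\in C([0,T];H)^2:\sup_{0\leq t\leq T}(\|W_t\|+t^\sigma\|W_t\|_{2\sigma})+\sup_{0\leq t\leq T}\|V_t\|<\infty\}$ for a fixed $\sigma\in[1/6,1/2]$. The weight $t^\sigma$ is precisely what absorbs the initial singularity produced by \eref{P1} when gaining $H^{2\sigma}$-regularity from $e^{tA}x$. The core estimate for $\tilde W_t$ in $H^{2\sigma}$ combines \eref{P1} in the form $\|e^{(t-s)A}\|_{H^{-\sigma}\to H^{2\sigma}}\leq C(t-s)^{-3\sigma/2}$ with the cubic bound \eref{P2}, together with the embedding $H^{2\sigma}\hookrightarrow H^{(1-\sigma)/3}$ (valid since $\sigma\geq 1/6>1/7$). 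The boundedness of $f$ from Condition \ref{A3} renders the $f$-term trivial, and the $V$-equation is handled by a standard Lipschitz argument using Condition \ref{A1} together with $\int_0^T\|Z^\varepsilon_A(s)\|\,ds\leq K^\varepsilon_T(\omega)$. On a ball of radius $R=O(\|x\|+K^\varepsilon_1(\omega)+1)$, every integral in $\Phi$ picks up a positive power of $T$, so for $T$ small $\Phi$ is a contraction, and the fixed point satisfies $\|W^\varepsilon_t\|_{2\sigma}\leq C(t^{-\sigma}+1)$ by unwinding the $H^{2\sigma}$-estimate.

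For part (ii), the extra regularity $x\in H^{2\sigma}$ removes the need for the weight: $\|e^{tA}x\|_{2\sigma}\leq\|x\|_{2\sigma}$ uniformly in $t$, and I would close the argument directly in $\tilde X_{\tilde T}:=\{(W,V)\in C([0,\tilde T];H^{2\sigma})\times C([0,\tilde T];H):\sup_t\|W_t\|_{2\sigma}\leq 1+\|x\|_{2\sigma}\}$. Now the nonlinear estimate uses \eref{P4} to bound $\|N(W_s+L_A(s))-N(W'_s+L_A(s))\|$ in $H$ itself (since $L_A(s)\in H^1\subset H^{2\sigma}$), so no negative-order Sobolev norms enter; combined with $\|e^{(t-s)A}\|_{H\to H^{2\sigma}}\leq C(t-s)^{-\sigma}$ one still has an integrable singularity, and shrinking $\tilde T=\tilde T(\|x\|_{2\sigma},\sigma,K^\varepsilon_1(\omega))$ makes $\Phi$ both ball-invariant and contractive. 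The bound $\sup_t\|W^\varepsilon_t\|_{2\sigma}\leq 1+\|x\|_{2\sigma}$ is then the defining property of the ball.

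The main obstacle I anticipate lies in part (i): absorbing the cubic $N$ starting from the rough datum $x\in H$. The subtle point is balancing the semigroup-smoothing exponent $(t-s)^{-3\sigma/2}$ against the factor $s^{-\sigma}$ arising from $\|W_s\|_{2\sigma}\leq Cs^{-\sigma}$ when this bound is cubed inside the nonlinear term, so that the resulting time integral is both finite and $o(1)$ as $T\downarrow 0$. This bookkeeping is where the constraint $\sigma\in[1/6,1/2]$ is decisive: $\sigma\geq 1/6$ is what makes the Sobolev embedding and the integrability of the compound singularity work, while $\sigma\leq 1/2$ keeps $L_A(s)\in H^1$ above the required regularity threshold. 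Once this balance is verified, the contraction estimate and the pointwise regularity of (i) follow from a standard Picard iteration, and (ii) is essentially a cleaner version of the same argument.
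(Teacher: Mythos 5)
Your overall strategy---a pathwise Banach fixed point argument for system \eref{A4.1} in a $t^{\sigma}$-weighted space, treating $L_A(\cdot,\omega)$ and $Z^{\vare}_A(\cdot,\omega)$ as frozen inputs controlled by $K^{\vare}_1(\omega)$---is exactly the paper's, and your treatment of part (ii) and of the $V$-equation is sound. The problem is in the concrete bookkeeping you propose for part (i). You want to run the contraction in $H^{2\sigma}$ for \emph{each} $\sigma\in[1/6,1/2]$, estimating the nonlinearity via $\|N(z)\|_{-\sigma}\leq C(1+\|z\|^3_{(1-\sigma)/3})$, the embedding $H^{2\sigma}\hookrightarrow H^{(1-\sigma)/3}$, and the smoothing bound $\|e^{(t-s)A}\|_{H^{-\sigma}\to H^{2\sigma}}\leq C(t-s)^{-3\sigma/2}$. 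Cubing the ball bound $\|W_s\|_{2\sigma}\leq s^{-\sigma}B$ then produces the integrand $(t-s)^{-3\sigma/2}s^{-3\sigma}$, and $s^{-3\sigma}$ is \emph{not} integrable at $s=0$ once $\sigma\geq 1/3$; the constraint $\sigma\leq 1/2$ does not rescue this, contrary to what you assert in your last paragraph. (A secondary issue: \eref{P2} is only stated for $\sigma\in[0,1/2)$, so the endpoint $\sigma=1/2$ of your range is not covered by the dual-norm estimate you invoke.)

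There are two standard repairs, and the paper uses the first. (1) Run the fixed point only at $\sigma=1/6$, measuring $N$ in $H$ itself via \eref{P2} with exponent $0$, i.e. $\|N(z)\|\leq C(1+\|z\|^3_{1/3})$; the semigroup then costs only $(t-s)^{-1/6}$ and the cubed weight is $s^{-1/2}$, both harmless, giving a factor $T^{1/2}$ for contractivity. Once the solution is constructed with $\|W_s\|_{1/3}\leq s^{-1/6}B$, the bound $\|W_t\|_{2\sigma}\leq C(t^{-\sigma}+1)$ for every $\sigma\in[1/6,1/2]$ follows by a single post-hoc pass through the mild formula, since $\int_0^t(t-s)^{-\sigma}(1+\|W_s\|^3_{1/3})ds\leq C\int_0^t(t-s)^{-\sigma}(1+s^{-1/2})ds<\infty$ for $\sigma\leq 1/2$. (2) Alternatively, keep your scheme but replace the crude embedding by the interpolation $\|W_s\|_{(1-\sigma)/3}\leq C\|W_s\|^{1-\theta}\|W_s\|_{2\sigma}^{\theta}$ with $\theta=(1-\sigma)/(6\sigma)$, which turns $s^{-3\sigma}$ into $s^{-(1-\sigma)/2}$, integrable for all $\sigma$ in range. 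As written, without one of these two devices, your contraction and self-mapping estimates fail on $[1/3,1/2]$, so the proof has a genuine gap even though the architecture is correct.
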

\begin{proof}
We shall apply the Banach fixed point theorem. Since these two statements will be proved by the same method, we only prove statement $(i)$.

Let $0<T\leq 1$ and $B>0$ be some
constants to be determined later. For $\sigma=\frac{1}{6}$, define
\begin{eqnarray*}
S:=\!\!\!\!\!\!\!\!&&\Big\{u=(u_1,u_2): u_i\in C([0, T]; H),i=1,2, u_1(0)=x,u_2(0)=y, \\
&&\quad u_1(t)\in H^{2\sigma},\forall t\in (0,T], \sup_{0\leq t\leq T}\left[t^{\sigma}\|u_1(t)\|_{2\sigma}\right]+\sup_{0\leq t\leq T}\|u_1(t)\|+\sup_{0\leq t\leq T}\|u_2(t)\|\leq B\Big\}.
\end{eqnarray*}
Given any $u=(u_1,u_2)$, $v=(v_1, v_2)\in S$, define
$$
d(u,v)=\sup_{0\leq t\leq T}\left[t^{\sigma}\|u_1(t)-v_1(t)\|_{2\sigma}\right]+\sup_{0\leq t\leq T}\|u_1(t)-v_1(t)\|+\sup_{0\leq t\leq T}\|u_2(t)-v_2(t)\|.
$$
Then $(S, d)$ is a closed metric space. Further define a map $\mathcal{F}: S\rightarrow (C([0, T]; H),C([0, T]; H))$ as the
following: for any $u=(u_1,u_2)\in S$,
$$\mathcal{F}(u)(t)=\left(\mathcal{F}(u)_1(t),\mathcal{F}(u)_2(t)\right),$$
where
\begin{equation}\left\{\begin{array}{l}\label{A4.2}
\displaystyle
\mathcal{F}(u)_1(t)\!=\!e^{tA}x+\!\!\int^t_0\!\! e^{(t-s)A}N(u_1(s)\!+\!L_A(s))ds+\!\!\int^t_0\!\!e^{(t-s)A}\!\!f(u_1(s)+L_A(s),u_2(s)+Z^{\varepsilon}_A(s))ds, \\
\mathcal{F}(u)_2(t)\!=\!e^{tA/\varepsilon}y+\frac{1}{\varepsilon}\int^t_0e^{(t-s)A/\vare}g(u_1(s)+L_A(s),u_2(s)+Z^{\varepsilon}_A(s))ds.\end{array}\right.
\end{equation}

We shall prove that there exist $T_0>0$ and $B_0>0$ such that whenever $ T\in(0, T_0]$ and $B>B_0$, the following two statements hold:\\
$\text{(a)}\quad \mathcal{F}(u)\in S \quad \text{for} \quad u\in S;$\\
$\text{(b)}\quad d(\mathcal{F}(u),\mathcal{F}(v))\leq \frac{1}{2}d(u,v) \quad \text{for} \quad u,v\in S.$\\
\vspace{0.5cm}
It is obvious that $\mathcal{F}(u)(0)=(x,y)$. By the condition \ref{A3}, properties \eref{P1} and \eref{P2},
we have
\begin{eqnarray*}
\|\mathcal{F}(u)_1(t)\|_{2\sigma}\leq\!\!\!\!\!\!\!\!&& Ct^{-\sigma}\|x\|+C\int^t_0(t-s)^{-\sigma}(1+\|u_1(s)\|^3_{2\sigma}+\|L_A(s)\|^3_{2\sigma})ds+C\int^t_0 (t-s)^{-\sigma}ds\\
\leq\!\!\!\!\!\!\!\!&&Ct^{-\sigma}\|x\|+C\int^t_0(t-s)^{-\sigma}\left[1+(K^{\vare}_1)^3+\|u_1(s)\|^3_{2\sigma}\right]ds+Ct^{1-\sigma},
\end{eqnarray*}
which implies
\begin{eqnarray}
\sup_{0\leq t\leq T}\left[t^{\sigma}\|\mathcal{F}(u)_1(t)\|_{2\sigma}\right]\leq\!\!\!\!\!\!\!\!&& C\|x\|+C\sup_{0\leq t\leq T}\left[t^{\sigma}\int^t_0(t-s)^{-\sigma}(1+(K^{\vare}_1)^3+s^{-3\sigma}B^3)ds\right]\nonumber\\
\leq\!\!\!\!\!\!\!\!&& C\|x\|+CT^{1-3\sigma}\left[1+(K^{\vare}_1)^3+B^3\right].\label{A4.3}
\end{eqnarray}
Meanwhile,
\begin{eqnarray}
\sup_{0\leq t\leq T}\|\mathcal{F}(u)_1(t)\|\leq\!\!\!\!\!\!\!\!&&\|x\|+ \int^T_0 \|N(u_1(s)+L_A(s))\|ds\nonumber\\
\!\!\!\!\!\!\!\!&&+\int^T_0\|f(u_1(s)+L_A(s),u_2(s)+Z^{\varepsilon}_A(s))\|ds\nonumber\\
\leq\!\!\!\!\!\!\!\!&&\|x\|+C\int^T_0(1+\|u_1(s)\|^3_{2\sigma}+\|L_A(s)\|^3_{2\sigma})ds\nonumber\\
\leq\!\!\!\!\!\!\!\!&&\|x\|+CT^{1-3\sigma}\left[1+(K^{\vare}_1)^3+B^3\right].\label{A4.4}
\end{eqnarray}
Furthermore,
\begin{eqnarray}
\sup_{0\leq t\leq T}\|\mathcal{F}(u)_2(t)\|\leq\!\!\!\!\!\!\!\!&& \|y\|+\frac{1}{\varepsilon}\left[\int^T_0\|g(u_1(s)+L_A(s),u_2(s)+Z^{\varepsilon}_A(s))\|ds\right]\nonumber\\
\leq\!\!\!\!\!\!\!\!&&\|y\|+\frac{CT}{\varepsilon}\left[\sup_{0\leq s\leq T}\left(1+\|u_1(s)\|+\|u_2(s)\|+\|L_A(s)\|\right)\right]+\frac{C}{\varepsilon}\int^T_0\|Z^{\varepsilon}_A(s)\|ds\nonumber\\
\leq\!\!\!\!\!\!\!\!&&\|y\|+\frac{CT}{\varepsilon}\left(1+B+K^{\varepsilon}_1\right)+\frac{C}{\varepsilon}K^{\varepsilon}_1.\label{A4.5}
\end{eqnarray}
It is easy to see the continuity of $\mathcal{F}(u)_1$ and $\mathcal{F}(u)_2$. As $T>0$ is sufficiently small and $B$ is large enough, statement $(a)$ follows from  \eref{A4.3}-\eref{A4.5}.

Now, let's prove statement $(b)$. Given any $u=(u_1,u_2)$, $v=(v_1, v_2)\in S$, by \eref{P4}
\begin{eqnarray*}
&&t^{\sigma}\|\mathcal{F}(u)_1(t)-\mathcal{F}(v)_1(t)\|_{2\sigma}\\
\leq\!\!\!\!\!\!\!\!&& Ct^{\sigma}\int^t_0 (t-s)^{-\sigma}\|N(u_1(s)+L_A(s))-N(v_1(s)+L_A(s))\|ds\\
\!\!\!\!\!\!\!\!&&+Ct^{\sigma}\int^t_0(t-s)^{\sigma}\|f(u_1(s)+L_A(s),u_2(s)+Z^{\varepsilon}_A(s))-f(v_1(s)+L_A(s),v_2(s)+Z^{\varepsilon}_A(s))\|ds\\
\leq\!\!\!\!\!\!\!\!&& Ct^{\sigma}\int^t_0 (t-s)^{-\sigma}\left[1+(K^{\vare}_1)^2+\|u_1(s)\|^2_{2\sigma}+\|v_1(s)\|^2_{2\sigma}\right]\|u_1(s)-v_1(s)\|_{2\sigma}ds\\
\!\!\!\!\!\!\!\!&&+Ct^{\sigma}\int^t_0(t-s)^{\sigma}\left[\|u_1(s)-v_1(s)\|+\|u_2(s)-v_2(s)\|\right]ds
\end{eqnarray*}
Note that $\|u_1(s)\|_{2\sigma}\leq s^{-\sigma}B$ and $\|v_1(s)\|_{2\sigma}\leq s^{-\sigma}B$, we have
\begin{eqnarray*}
t^{\sigma}\|\mathcal{F}(u)_1(t)-\mathcal{F}(v)_1(t)\|_{2\sigma}\leq\!\!\!\!\!\!\!\!&& C\left[1+(K^{\vare}_1)^2\right]t^{\sigma}\int^t_0 (t-s)^{-\sigma}s^{-\sigma}\left(s^{\sigma}\|u_1(s)-v_1(s)\|_{2\sigma}\right)ds\\
\!\!\!\!\!\!\!\!&&+CB^2t^{\sigma}\int^t_0 (t-s)^{-\sigma}s^{-3\sigma}s^{\sigma}\|u_1(s)-v_1(s)\|_{2\sigma}ds\\
\!\!\!\!\!\!\!\!&&+Ct^{\sigma}\int^t_0(t-s)^{-\sigma}\left[\|u_1(s)-v_1(s)\|+\|u_2(s)-v_2(s)\|\right]ds\\
\leq\!\!\!\!\!\!\!\!&& C\left[1+(K^{\vare}_1)^2\right]t^{1-\sigma}\sup_{0\leq s\leq t}\left[s^{\sigma}\|u_1(s)-v_1(s)\|_{2\sigma}\right]\\
\!\!\!\!\!\!\!\!&&+CB^2t^{1-3\sigma}\sup_{0\leq s\leq t}\left[s^{\sigma}\|u_1(s)-v_1(s)\|_{2\sigma}\right]\\
\!\!\!\!\!\!\!\!&&+Ct\sup_{0\leq s\leq t}\left[\|u_1(s)-v_1(s)\|+\|u_2(s)-v_2(s)\|\right]\\
\leq\!\!\!\!\!\!\!\!&& C\left[1+(K^{\vare}_1)^2+B^2\right]t^{1-3\sigma}d(u,v).
\end{eqnarray*}
This implies
\begin{eqnarray}
\sup_{0\leq t\leq T}\left[t^{\sigma}\|\mathcal{F}(u)_1(t)-\mathcal{F}(v)_1(t)\|_{2\sigma}\right]\leq\!\!\!\!\!\!\!\!&&C\left[1+(K^{\vare}_1)^2+B^2\right]T^{1-3\sigma}d(u,v).\label{A4.6}
\end{eqnarray}
Meanwhile, by \eref{P4}, we obtain
\begin{eqnarray*}
&&\|\mathcal{F}(u)_1(t)-\mathcal{F}(v)_1(t)\|\\
\leq\!\!\!\!\!\!\!\!&&\int^t_0 \|N(u_1(s)+L_A(s))-N(v_1(s)+L_A(s))\|ds\nonumber\\
\!\!\!\!\!\!\!\!&&+\int^t_0\|f(u_1(s)+L_A(s),u_2(s)+Z^{\varepsilon}_A(s))-f(v_1(s)+L_A(s),v_2(s)+Z^{\varepsilon}_A(s))\|ds\nonumber\\
\leq\!\!\!\!\!\!\!\!&&C\int^t_0 \left[1+(K^{\vare}_1)^2+\|u_1(s)\|^2_{2\sigma}+\|v_1(s)\|^2_{2\sigma}\right]\|u_1(s)-v_1(s)\|_{2\sigma}ds\nonumber\\
\!\!\!\!\!\!\!\!&&+C\int^t_0\left[\|u_1(s)-v_1(s)\|+\|u_2(s)-v_2(s)\|\right]ds\nonumber\\
\leq\!\!\!\!\!\!\!\!&&C\left[1+(K^{\vare}_1)^2\right]\int^t_0s^{-\sigma}\left[s^{\sigma}\|u_1(s)-v_1(s)\|_{2\sigma}\right]ds+CB^2\int^t_0s^{-3\sigma}\left[s^{\sigma}\|u_1(s)-v_1(s)\|_{2\sigma}\right]ds\nonumber\\
\!\!\!\!\!\!\!\!&&+C\int^t_0\left[\|u_1(s)-v_1(s)\|+\|u_2(s)-v_2(s)\|\right]ds,\nonumber
\end{eqnarray*}
which implies
\begin{eqnarray}
\sup_{0\leq t\leq T}\|\mathcal{F}(u)_1(t)-\mathcal{F}(v)_1(t)\|\leq\!\!\!\!\!\!\!\!&&CT^{1-\sigma}\left[1+(K^{\vare}_1)^2\right]\sup_{0\leq t\leq T}\left[t^{\sigma}\|u_1(t)-v_1(t)\|_{2\sigma}\right]\nonumber\\
\!\!\!\!\!\!\!\!&&+CB^2T^{1-3\sigma}\sup_{0\leq t\leq T}\left[t^{\sigma}\|u_1(t)-v_1(t)\|_{2\sigma}\right]\nonumber\\
\!\!\!\!\!\!\!\!&&+CT\sup_{0\leq t\leq T}\left[\|u_1(t)-v_1(t)\|+\|u_2(t)-v_2(t)\|\right],\nonumber\\
\leq\!\!\!\!\!\!\!\!&& C\left[1+(K^{\vare}_1)^2+B^2\right]T^{1-3\sigma}d(u,v).\label{A4.7}
\end{eqnarray}
Furthermore,
\begin{eqnarray}
&&\sup_{0\leq t\leq T}\|\mathcal{F}(u)_2(t)-\mathcal{F}(v)_2(t)\|\nonumber\\
\leq\!\!\!\!\!\!\!\!&& \frac{1}{\varepsilon}\int^T_0\|g(u_1(s)+L_A(s),u_2(s)+Z^{\varepsilon}_A(s))-g(v_1(s)+L_A(s),v_2(s)+Z^{\varepsilon}_A(s))\|ds\nonumber\\
\leq\!\!\!\!\!\!\!\!&&\frac{CT}{\varepsilon}\sup_{0\leq s\leq T}\left[\|u_1(s)-v_1(s)\|+\|u_2(s)-v_2(s)\|\right]\nonumber\\
\leq\!\!\!\!\!\!\!\!&&\frac{CT}{\varepsilon}d(u,v).\label{A4.8}
\end{eqnarray}
By \eref{A4.6} to \eref{A4.8} and choosing $T$ small enough, it is easy to see statement $(b)$ holds. Finally, system \eref{A4.1} has a unique solution in $S$ by the Banach fixed point theorem.

Let $(W_{\cdot}, V_{\cdot})\in S$ be the solution obtained by the above, for every $\sigma\in [\frac{1}{6}, \frac{1}{2}]$,
\begin{eqnarray*}
\|W^{\vare}_t\|_{2\sigma}\leq\!\!\!\!\!\!\!\!&& Ct^{-\sigma}\|x\|+C\int^t_0(t-s)^{-\sigma}\|N(W^{\vare}_s+L_A(s))\|ds+C\int^t_0 (t-s)^{-\sigma}ds\nonumber\\
\leq\!\!\!\!\!\!\!\!&&Ct^{-\sigma}\|x\|+C\int^t_0(t-s)^{-\sigma}\left[\|W^{\vare}_s\|^3_{1/3}+(K^{\vare}_1)^3+1\right]ds+Ct^{1-\sigma}\nonumber\\
\leq\!\!\!\!\!\!\!\!&&Ct^{-\sigma}\|x\|+C\int^t_0(t-s)^{-\sigma}\left[s^{-1/2}B^3+(K^{\vare}_1)^3+1\right]ds+Ct^{1-\sigma}\nonumber\\
\leq\!\!\!\!\!\!\!\!&&C(1+t^{-\sigma}),
\end{eqnarray*}
where $C$ is some constant depending on $\|x\|$, $\sigma$ and $K^{\vare}_1(\omega)$. The proof is complete.
\end{proof}

Now, we give a position to prove Theorem \ref{EUS}.

\textbf{The proof of Theorem \ref{EUS}:} By Lemma \ref{AL4.1}, Eq. \eref{A4.1} admits a unique local solution $W^{\vare}\in C([0,T]; H)\cap C((0,T]; V)$, $V^{\vare}\in C([0,T]; H)$ for some $T>0$, then follow the same steps in \cite[Lemma 4.2]{Xu1}, we can extend this solution to be $W^{\vare}\in C([0,\infty); H)\cap C((0,\infty); V)$, $V^{\vare}\in C([0,\infty); H)$.

We now prove that the uniqueness of the solution. Suppose there are two solutions $W^{\vare}\in C([0,T]; H)\cap C((0,T]; V)$, $V^{\vare}\in C([0,T]; H)$ and $\tilde W^{\vare}\in C([0,T]; H)\cap C((0,T]; V)$, $\tilde V^{\vare}\in C([0,T]; H)$. Thanks to the uniqueness of $[0, T]$, we have $W^{\vare}_T=\tilde W^{\vare}_T$ and $V^{\vare}_T=\tilde V^{\vare}_T$. For any $T_0>T$, it follows from the continuity that
$$
\sup_{T\leq t\leq T_0}\|W^{\vare}_t\|_1\leq \tilde C,\quad\quad \sup_{T\leq t\leq T_0}\|\tilde W^{\vare}_t\|_1\leq \tilde C,
$$
where $\tilde C>0$ depends on $T_0, \omega$. Hence, for all $t\in[T, T_0]$, we have
\begin{eqnarray}
&&\|W^{\vare}_t-\tilde W^{\vare}_t\|_{1}\nonumber\\
\leq\!\!\!\!\!\!\!\!&& C\int^t_T (t-s)^{-1/2}\|N(W^{\vare}_s+L_A(s))-N(\tilde W^{\vare}_s+L_A(s))\|ds\nonumber\\
\!\!\!\!\!\!\!\!&&+C\int^t_T(t-s)^{-1/2}\|f(W^{\vare}_s+L_A(s),V^{\vare}_s+Z^{\varepsilon}_A(s))-f(\tilde W^{\vare}_s+L_A(s),\tilde V^{\vare}_s+Z^{\varepsilon}_A(s))\|ds\nonumber\\
\leq\!\!\!\!\!\!\!\!&& C\int^t_T (t-s)^{-1/2}\left[1+(K^{\vare}_1)^2+\|W^{\vare}_s\|^2_1+\|\tilde W^{\vare}_s\|^2_1\right]\|W^{\vare}_s-\tilde W^{\vare}_s\|_{1}ds\nonumber\\
\!\!\!\!\!\!\!\!&&+C\int^t_T(t-s)^{-1/2}\left[\|W^{\vare}_s-\tilde W^{\vare}_s\|+\|V^{\vare}_s-\tilde V^{\vare}_s\|\right]ds\label{A4.10}\\
\leq\!\!\!\!\!\!\!\!&& C\tilde K^{\vare}_1\int^t_T (t-s)^{-1/2}\|W^{\vare}_s-\tilde W^{\vare}_s\|_{1}ds+C\int^t_T(t-s)^{-1/2}\left[\|W^{\vare}_s-\tilde W^{\vare}_s\|+\|V^{\vare}_s-\tilde V^{\vare}_s\|\right]ds,\nonumber
\end{eqnarray}
where $\tilde K^{\vare}_1=1+(K^{\vare}_1)^2+2\tilde C^2$. On the other hand,
\begin{eqnarray}
\|V^{\vare}_t-\tilde V^{\vare}_t\|\leq\!\!\!\!\!\!\!\!&& \frac{1}{\varepsilon}\int^t_T\|g(W^{\vare}_s+L_A(s),V^{\vare}_s)+Z^{\varepsilon}_A(s))-g(\tilde W^{\vare}_s+L_A(s),\tilde V^{\vare}_s+Z^{\varepsilon}_A(s))\|ds\nonumber\\
\leq\!\!\!\!\!\!\!\!&&\frac{C}{\varepsilon}\int^t_T\left[\|W^{\vare}_s-\tilde W^{\vare}_s\|+\|V^{\vare}_s-\tilde V^{\vare}_s\|\right]ds.\label{A4.11}
\end{eqnarray}
By \eref{A4.10} and \eref{A4.11}, Gronwall's inequality implies $W^{\vare}_t=\tilde W^{\vare}_t$ and $V^{\vare}_t=\tilde V^{\vare}_t$ for all $t\in[T_0, T]$. Since $T_0$ is arbitrary, we get the uniqueness of the solution.

For any $\varepsilon>0$ fixed, $L_A\in D([0, \infty); V)$  and $Z^{\vare}_A(t)\in H$. By the discussion above, $X^{\varepsilon}_t=W^{\vare}_t+L_A(t)$, $Y^{\varepsilon}_t=V^{\vare}_t+Z^{\vare}_A(t)$ is the unique solution to \eref{main equation}.$\Box$

\subsection{Galerkin approximation}

Recall the Galerkin approximation of system \eref{main equation}:
\begin{equation}\left\{\begin{array}{l}\label{A4.12}
\displaystyle
dX^{m,\vare}_t=[AX^{m,\vare}_t+N^m(X^{m,\vare}_t)+f^m(X^{m,\vare}_t, Y^{m,\vare}_t)]dt+d\bar L^m_t,\quad X^{m,\vare}_0=x^{m}\in H_m\\
dY^{m,\vare}_t=\frac{1}{\vare}[AY^{m,\vare}_t+g^m(X^{m,\vare}_t, Y^{m,\vare}_t)]dt+\frac{1}{\vare^{1/\alpha}}d\bar Z^m_t,\quad Y^{m,\vare}_0=y^m\in H_m.\end{array}\right.
\end{equation}

\begin{theorem} \label{GA}
For every $x\in H$, $y\in H$, system \eref{A4.12} has a unique mild solution $X^{m,\vare}.(\omega)\in D([0,\infty);H)\cap D((0,\infty);V)$ and $Y^{m,\vare}_t(\omega)\in H$, i.e.,
\begin{equation}\left\{\begin{array}{l}\label{A mild solution of F}
\displaystyle
X^{m,\varepsilon}_t=e^{tA}x^m+\int^t_0e^{(t-s)A}N^m(X^{m,\varepsilon}_s)ds+\int^t_0e^{(t-s)A}f^m(X^{m,\varepsilon}_s, Y^{m,\varepsilon}_s)ds+\int^t_0 e^{(t-s)A}d\bar {L}^m_s,\nonumber\\
Y^{m,\varepsilon}_t=e^{tA/\varepsilon}y^m+\frac{1}{\varepsilon}\int^t_0e^{(t-s)A/\varepsilon}g^m(X^{m,\varepsilon}_s,Y^{m,\varepsilon}_s)ds
+\frac{1}{\vare^{1/\alpha}}\int^t_0 e^{(t-s)A/\varepsilon}d\bar {Z}^m_s.\nonumber
\end{array}\right.
\end{equation}
Moreover, $\PP$-a.s.,
\begin{eqnarray}
\lim_{m\rightarrow \infty}\|X^{m,\vare}_t-X^{\vare}_t\|=0,\quad t\geq 0, \label{FA1}
\end{eqnarray}
\begin{eqnarray}
\lim_{m\rightarrow \infty}\|X^{m,\vare}_t-X^{\vare}_t\|_1=0,\quad t>0. \label{FA2}
\end{eqnarray}
\end{theorem}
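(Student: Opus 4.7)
The plan is to mirror the proof of Theorem \ref{EUS} in the finite-dimensional setting of $H_m$ and then deduce the convergence $X^{m,\varepsilon}\to X^{\varepsilon}$ by a pathwise stability argument. For existence and uniqueness, set
\[
L^m_A(t):=\sum_{|k|\leq m}\beta_k e_k\int_0^t e^{-\lambda_k(t-s)}dL^k_s,\qquad Z^{m,\varepsilon}_A(t):=\frac{1}{\varepsilon^{1/\alpha}}\sum_{|k|\leq m}\gamma_k e_k\int_0^t e^{-\lambda_k(t-s)/\varepsilon}dZ^k_s,
\]
and $W^{m,\varepsilon}_t:=X^{m,\varepsilon}_t-L^m_A(t)$, $V^{m,\varepsilon}_t:=Y^{m,\varepsilon}_t-Z^{m,\varepsilon}_A(t)$. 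Then $(W^{m,\varepsilon},V^{m,\varepsilon})$ satisfies pathwise a deterministic integral equation in $H_m\times H_m$ of the same form as \eref{A4.1}, with $L_A,Z^{\varepsilon}_A,N,f,g$ replaced by $L^m_A,Z^{m,\varepsilon}_A,N^m,f^m,g^m$. Because the eigenbasis is orthonormal, the pathwise ``noise control'' satisfies $K^{m,\varepsilon}_T(\omega)\leq K^{\varepsilon}_T(\omega)$ for every $m$ and $\omega\notin N^{\varepsilon}$, so the Banach fixed-point argument of Lemma \ref{AL4.1} applies uniformly in $m$ on a common random interval $[0,T(\omega)]$, giving a unique local solution with $W^{m,\varepsilon}\in C([0,T];H)\cap C((0,T];V)$. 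The a priori estimate of Lemma \ref{PMY}, whose It\^o-formula derivation only used condition \ref{A2}, inequality \eref{P5}, and the projected jump measure (all descending to $H_m$), permits extension to $[0,\infty)$; uniqueness then follows exactly as in the proof of Theorem \ref{EUS}.

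For the convergence statements \eref{FA1}--\eref{FA2}, I would first establish the pathwise tail bounds
\[
\sup_{0\leq t\leq T}\|L^m_A(t)-L_A(t)\|_{2\theta}\longrightarrow 0,\qquad \sup_{0\leq t\leq T}\|Z^{m,\varepsilon}_A(t)-Z^{\varepsilon}_A(t)\|\longrightarrow 0\quad \PP\text{-a.s.}
\]
as $m\to\infty$, for some $\theta\in(1/2,\beta-\tfrac{1}{2\alpha})$. These follow from Remark \ref{Re2} applied to the tails $\{\beta_k\}_{|k|>m}$ and $\{\gamma_k\}_{|k|>m}$, together with a Borel--Cantelli extraction along a subsequence. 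Combined with $\pi_m x\to x$ and $\pi_m y\to y$ in $H$, this shows that all ``noise'' and initial-data inputs to the Galerkin system converge uniformly on $[0,T(\omega)]$.

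Next I would compare $(W^{m,\varepsilon},V^{m,\varepsilon})$ with $(W^{\varepsilon},V^{\varepsilon})$ inside the common fixed-point ball $S$ of Lemma \ref{AL4.1} on $[0,T(\omega)]$; the radius $B$ and the length $T$ depend only on $\|x\|$, $\|y\|$ and $K^{\varepsilon}_1(\omega)$, so they can be taken common for all large $m$. Subtracting the mild formulations and invoking \eref{P4} and condition \ref{A1}, together with the uniform-in-$m$ fixed-point bounds $\|W^{m,\varepsilon}_s\|_{2\sigma}\leq Bs^{-\sigma}$, I would derive a singular-kernel Gronwall inequality whose forcing term $\mathrm{err}_m(\omega)$ assembles $\|x^m-x\|$, $\|y^m-y\|$, $\sup_{[0,T]}\|L^m_A-L_A\|_{2\theta}$, $\sup_{[0,T]}\|Z^{m,\varepsilon}_A-Z^{\varepsilon}_A\|$, and the projection error $\sup_{[0,T]}\|(\pi_m-\mathrm{id})\,N(W^{\varepsilon}+L_A)\|_{-\sigma}$. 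Each of these vanishes as $m\to\infty$; the last one by strong pointwise convergence $\pi_m\to\mathrm{id}$ applied to the $H^{-\sigma}$-valued continuous path $s\mapsto N(W^{\varepsilon}_s+L_A(s))$ (whose range is bounded via \eref{P2}) and the dominated convergence theorem. Iterating the singular Gronwall inequality yields $W^{m,\varepsilon}\to W^{\varepsilon}$ in $C([0,T];H)\cap C([t_0,T];V)$ for every $t_0>0$; adding back the noise convolutions gives \eref{FA1}--\eref{FA2} on $[0,T(\omega)]$, and reiterating on successive intervals extends the convergence to all $t\geq 0$.

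The main obstacle, I expect, is the cubic nonlinearity: the Lipschitz-type bound \eref{P4} forces squared $H^{2\sigma}$-norms into the difference estimate, which are only controlled via the singular bound $\|W^{m,\varepsilon}_s\|_{2\sigma}\leq Bs^{-\sigma}$. Keeping the resulting singular-kernel Gronwall argument uniform in $m$, and correctly showing that the projection error $(\pi_m-\mathrm{id})\,N(W^{\varepsilon}+L_A)$ vanishes in the appropriate negative-Sobolev norm rather than in $H$ itself (where $N(W^{\varepsilon}+L_A)$ need not lie), is the core technical point of the convergence argument.
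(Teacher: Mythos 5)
Your architecture coincides with the paper's: pathwise well-posedness of the Galerkin system by rerunning the fixed-point argument of Lemma \ref{AL4.1} in $H_m$ (your observation that $\|\pi_m\cdot\|\leq\|\cdot\|$ forces $K^{m,\vare}_T\leq K^{\vare}_T$, hence uniform-in-$m$ balls and intervals, is exactly the right point), followed by a pathwise singular-kernel Gronwall estimate for the difference in which the projection errors are killed by $\pi_m\to I$ strongly plus dominated convergence, and the Lipschitz differences are absorbed via the $t^{-\sigma}$ blow-up bounds. The paper runs this Gronwall directly on $\|X^{m,\vare}_t-X^{\vare}_t\|_1$, using the pathwise bounds $\sup_{t\leq T}\|X^{\vare}_t\|\leq\tilde B$ and $\|X^{\vare}_t\|_{\theta}\leq\tilde C(t^{-\theta/2}+1)$ together with the interpolation $\|u\|_{1/3}\leq C\|u\|^{2/3}\|u\|_1^{1/3}$ and Fatou's lemma to pass $\limsup_m$ inside the integrals, whereas you run it on the shifted process in the weighted metric of Lemma \ref{AL4.1}; these are interchangeable.

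Two steps in your write-up would fail as literally stated and need the paper's repair. First, the forcing term $\sup_{0\leq s\leq T}\|(\pi_m-\id)N(W^{\vare}_s+L_A(s))\|_{-\sigma}$ is in general infinite for $x\in H$: inside the fixed-point ball one only has $\|W^{\vare}_s\|_{2\sigma}\leq Bs^{-\sigma}$, so by \eref{P2} and interpolation the path $s\mapsto N(W^{\vare}_s+L_A(s))$ blows up like $s^{-1/2}$ in $H^{-\sigma}$ as $s\to 0$, and its range is not bounded (contrary to your parenthetical claim). The projection error must be kept inside the convolution integral and handled there by dominated convergence against the integrable majorant $(t-s)^{-1/2}s^{-1/2}$; this is precisely how the paper treats the term $I_2(t)$ in \eref{A4.16}, after first establishing \eref{A4.15}. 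Second, Borel--Cantelli only yields a.s.\ convergence of $\sup_{t\leq T}\|L^m_A(t)-L_A(t)\|_{2\theta}$ along a subsequence; to upgrade to the full sequence you should note that this quantity is pathwise nonincreasing in $m$ (it is the norm of a tail of an orthogonal expansion), or simply argue pointwise in $t$ from $L_A(t)\in H^{2\theta}$ a.s., which is all the paper needs for its term $I_6$. Neither repair changes the substance, and your identification of the cubic term as the technical crux is accurate.
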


\begin{proof}
Following the same argument in proving Theorem \ref{EUS}, it is easy to prove that the system \eref{A4.12} has a unique mild solution $X^{m,\vare}.(\omega)\in D([0,\infty);H)\cap D((0,\infty);V)$ and $Y^{m,\vare}_t\in H$. It remains to prove \eref{FA1} and \eref{FA2}. Since when $t=0$, \eref{FA1} holds obviously. We only show \eref{FA2}.

It is easy to see that for any $T>0$,
$$
\sup_{0\leq t\leq T}[\|X^{\vare}_t\|+\|Y^{\vare}_t\|]\leq \tilde B,
$$
where $\tilde B>0$ depends on $\|x\|$, $\|y\|$, $T$ and $K^{\vare}_t$. By property \eref{P2}, for any $\theta\in(1/2,1]$, $t\in (0, T]$
\begin{eqnarray*}
\|X^{\varepsilon}_t\|_{\theta}
\leq\!\!\!\!\!\!\!\!&&\|e^{tA}x\|_{\theta}+\left\|\int^t_0e^{(t-s)A}N(X^{\varepsilon}_s)ds\right\|_{\theta}+\left\|\int^t_0e^{(t-s)A}f(X^{\varepsilon}_s, Y^{\varepsilon}_s)ds\right\|_{\theta}+\left\|\int^t_0 e^{(t-s)A}dL_s\right\|_{\theta}\\
\leq \!\!\!\!\!\!\!\!&&t^{-\theta/2}\|x\|+\int^t_0(t-s)^{-1/2}\|N(X^{\varepsilon}_s)\|_{-(1-\theta)}ds
+
C\int^t_0 (t-s)^{-\theta/2}ds+\|L_{A}(t)\|_{\theta} \nonumber\\
\leq \!\!\!\!\!\!\!\!&& t^{-\theta/2}\|x\|+K^{\vare}_t+C_T+C\int^t_0(t-s)^{-1/2}(1+\|X^{\varepsilon}_s\|^3_{\theta/3})ds  \nonumber\\
\leq \!\!\!\!\!\!\!\!&&t^{-\theta/2}\|x\|+K^{\vare}_t+C_T+\int^t_0(t-s)^{-1/2}\|X^{\varepsilon}_s\|^2 \|X^{\varepsilon}_s\|_{\theta}ds \nonumber\\
\leq \!\!\!\!\!\!\!\!&&t^{-\theta/2}\|x\|+K^{\vare}_t+C_T+C\sup_{0\leq s\leq t}\|X^{\varepsilon}_t\|^2\int^t_0(t-s)^{-1/2} \|X^{\varepsilon}_s\|_{\theta}ds,\nonumber\\
\leq \!\!\!\!\!\!\!\!&&t^{-\theta/2}\|x\|+K^{\vare}_t+C_T+C\tilde{B}^2\int^t_0(t-s)^{-1/2} \|X^{\varepsilon}_s\|_{\theta}ds.\nonumber\\
\leq \!\!\!\!\!\!\!\!&&t^{-\theta/2}\|x\|+K^{\vare}_t+C_T+C\tilde{B}^2\left[\int^t_0(t-s)^{-p/2}ds\right]^{1/p} \left(\int^t_0\|X^{\varepsilon}_s\|^q_{\theta}ds\right)^{1/q},
\end{eqnarray*}
where $\frac{1}{p}+\frac{1}{q}=1$ with $1<p<2$. Then we have
\begin{eqnarray*}
\|X^{\varepsilon}_t\|^q_{\theta}
\leq\!\!\!\!\!\!\!\!&& C\left(t^{-\theta/2}\|x\|+K^{\vare}_t+C_T\right)^q+C_T\tilde{B}^{2q}\int^t_0\|X^{\varepsilon}_s\|^q_{\theta}ds.
\end{eqnarray*}
The Gronwall's inequality yields
\begin{eqnarray}
\|X^{\vare}_t\|_{\theta}\leq \tilde C (t^{-\theta/2}+1),\label{A4.15}
\end{eqnarray}
where $\tilde C>0$ depends on $\|x\|$, $\|y\|$, $T$ and $K^{\vare}_t$. Observe
\begin{eqnarray}
X^{m,\vare}_t-X^{\vare}_t=\!\!\!\!\!\!\!\!&&e^{tA}(x^m-x)+\int^{t}_{0}e^{(t-s)A}(\pi_m-I)N(X^{\vare}_s)ds\nonumber\\
&&\!\!\!\!\!\!\!\!+\int^{t}_{0}e^{(t-s)A}(N^{m}(X^{m,\vare}_s)-N^{m}(X^{\vare}_s))ds+\int^{t}_{0}e^{(t-s)A}(\pi_m-I)f(X^{\vare}_s, Y^{\vare}_s)ds\nonumber\\
&&\!\!\!\!\!\!\!\!+\int^{t}_{0}e^{(t-s)A}(f^{m}(X^{m,\vare}_s,Y^{m,\vare}_s)-f^{m}(X^{\vare}_s,Y^{\vare}_s))ds
+\left[\bar L^{m}_A(t)-L_A(t)\right]\nonumber\\
:=\!\!\!\!\!\!\!\!&&\sum^6_{i=1}I_i(t).\label{A4.16}
\end{eqnarray}
It is clear that
\begin{equation}
\lim_{m\to\infty}\|I_1(t)\|_{1}=0,\quad \lim_{m\to\infty}\|I_6(t)\|_{1}=0.\label{A4.17}
\end{equation}

For $I_4(t)$, notice that $f$ is bounded and by the dominated convergence theorem,
\begin{equation}
\|I_4(t)\|_{1}\leq C\int^t_0(t-s)^{-1/2}\|(\pi_m-I)f(X^{\vare}_s, Y^{\vare}_s)\|ds\rightarrow 0,\quad m\to\infty.\label{A4.18}
\end{equation}

For $I_5(t)$,
\begin{equation}
\|I_5(t)\|_{1}\leq C\int^t_0(t-s)^{-1/2}\left(\|X^{m,\vare}_s-X^{\vare}_s\|+\|Y^{m,\vare}_s-Y^{\vare}_s\|\right)ds.\label{A4.19}
\end{equation}

For $I_2(t)$, the property \eref{P2} and \eref{A4.15} imply for any $0<s<t$
$$\lim_{m\to\infty}\|(\pi_m-I)N(X^{\vare}_s)\|= 0.$$
This and the dominated convergence theorem yield
\begin{equation}
\|I_2(t)\|_{1}\leq C\int^t_0(t-s)^{-1/2}\|(\pi_m-I)N(X^{\vare}_s)\|ds\rightarrow 0,\quad m\to\infty.\label{A4.20}
\end{equation}

It remains to estimate $I_3(t)$. By \eref{P4}, \eref{A4.15} and the interpolation inequality,
\begin{eqnarray}
\|I_3(t)\|_{1}
\leq\!\!\!\!\!\!\!\!&&C\int^t_0(t-s)^{-1/2}\|N^{m}(X^{m,\vare}_s)-N^{m}(X^{\vare}_s)\|ds\nonumber\\
\leq\!\!\!\!\!\!\!\!&&C\int^t_0(t-s)^{-1/2}\|N(X^{m,\vare}_s)-N(X^{\vare}_s)\|ds\nonumber\\
\leq\!\!\!\!\!\!\!\!&&C\int^t_0(t-s)^{-1/2}(1+\|X^{m,\vare}_s\|^2_{1/3}+\|X^{\vare}_s\|^2_{1/3})\|X_s-X^{m,\vare}_s\|_{1/3}ds\nonumber\\
\leq\!\!\!\!\!\!\!\!&&C\int^t_0(t-s)^{-1/2}(1 +\|X^{\vare}_s\|^{4/3}\|X^{\vare}_s\|^{2/3}_{1}+\|X^{m,\vare}_s\|^{4/3}\|X^{m,\vare}_s\|^{2/3}_{1})\|X^{m,\vare}_s-X^{\vare}_s\|_{1}ds\nonumber\\
\leq\!\!\!\!\!\!\!\!&&C\int^t_0(t-s)^{-1/2}(1+2\tilde B^{4/3}\tilde C s^{-1/3})
\|X^{\vare}_s-X^{m,\vare}_s\|_{1}ds.\label{A4.21}
\end{eqnarray}

On the other hand,
\begin{eqnarray}
Y^{m,\vare}_t-Y^{\vare}_t=\!\!\!\!\!\!\!\!&&e^{tA/\vare}(y^m-y)+\frac{1}{\vare}\int^{t}_{0}e^{(t-s)A/\vare}(\pi_m-I)g(X^{\vare}_s, Y^{\vare}_s)ds \nonumber\\ &&\!\!\!\!\!\!\!\!+\frac{1}{\vare}\int^{t}_{0}e^{(t-s)A/\vare}(g^{m}(X^{m,\vare}_s,Y^{m,\vare}_s)-g^{m}(X^{\vare}_s,Y^{\vare}_s))ds+\left[\bar Z^{m,\vare}_A(t)-Z^{\vare}_A(t)\right]\nonumber\\
:=\!\!\!\!\!\!\!\!&&\sum^4_{i=1}J_i(t),\label{A4.22}
\end{eqnarray}
where $Z^{m,\vare}_A(t):=\frac{1}{\vare^{1/\alpha}}\int^t_0 e^{(t-s)A/\vare}d \bar{Z}^m_s$. It is clear that
\begin{equation}
\lim_{m\to\infty}\|J_1(t)\|=0,\quad \lim_{m\to\infty}\|J_4(t)\|=0.\label{A4.23}
\end{equation}

For $J_2(t)$, by the dominated convergence theorem, we obtain
\begin{equation}
\|J_2(t)\|\leq C\int^t_0\|(\pi_m-I)g(X^{\vare}_s, Y^{\vare}_s)\|ds\rightarrow 0,\quad m\to\infty.\label{A4.24}
\end{equation}

For $J_3(t)$, by the Lipschitz continuous of $g$,
\begin{equation}
\|J_3(t)\|\leq C_\epsilon\int^t_0\left(\|X^{m,\vare}_s-X^{\vare}_s\|+\|Y^{m,\vare}_s-Y^{\vare}_s\|\right)ds.\label{A4.25}
\end{equation}
By \eref{A4.22}-\eref{A4.25}, Fatou's lemma and Gronwall's inequality, we have
\begin{eqnarray}
\limsup_{m\to\infty}\|Y^{m,\vare}_t-Y^{\vare}_t\|\leq C_\epsilon\int^t_0\limsup_{m\to\infty}\|X^{m,\vare}_s-X^{\vare}_s\|ds.\label{A4.26}
\end{eqnarray}

Combing \eref{A4.16}-\eref{A4.21} and \eref{A4.26}, then by Fatou's lemma, we obtain
\begin{eqnarray*}
\limsup_{m\to\infty}\|X^{m,\vare}_t-X^{\vare}_t\|_{1}
\leq\!\!\!\!\!\!\!\!&&C\int^t_0(t-s)^{-1/2}\left(\limsup_{m\to\infty}\|X^{m,\vare}_s-X^{\vare}_s\|+\int^s_0\limsup_{m\to\infty}\|X^{m,\vare}_r-X^{\vare}_r\|dr\right)ds\\
\!\!\!\!\!\!\!\!&&+C\int^t_0(t-s)^{-1/2}(1+2\tilde B^{4/3}\tilde C s^{-1/3})\limsup_{m\to\infty}\|X^{\vare}_s-X^{m,\vare}_s\|_{1}ds\\
\leq\!\!\!\!\!\!\!\!&&C\int^t_0(t-s)^{-1/2}(1+2\tilde B^{4/3}\tilde C s^{-1/3})\limsup_{m\to\infty}\|X^{\vare}_s-X^{m,\vare}_s\|_1ds\\
\leq\!\!\!\!\!\!\!\!&&C\left[\int^t_0(t-s)^{-p/2}(1+2\tilde B^{4/3}\tilde C s^{-1/3})^pds\right]^{1/p}\left[\int^t_0\limsup_{m\to\infty}\|X^{\vare}_s-X^{m,\vare}_s\|^q_1ds\right]^{1/q}.
\end{eqnarray*}
where $\frac{1}{p}+\frac{1}{q}=1$ with $1<p<\frac{6}{5}$. Then we have
\begin{eqnarray*}
\limsup_{m\to\infty}\|X^{m,\vare}_t-X^{\vare}_t\|^q_{1}\leq\!\!\!\!\!\!\!\!&&\hat C\int^t_0\limsup_{m\to\infty}\|X^{\vare}_s-X^{m,\vare}_s\|^q_1ds,
\end{eqnarray*}
where $\hat C$ depends on $\|x\|$, $\|y\|$, $T$ and $K^{\vare}_t$. Hence, the Grownwall's inequality implies
$$\limsup_{m\rightarrow\infty}\|X^{m,\vare}_t-X^{\vare}_t\|_{1}=0.$$
The proof is complete.
\end{proof}

\textbf{Acknowledgment}. We would like to gratefully thank Professor Lihu Xu for valuable discussions and kind suggestions. This work was conducted during the first author visited
the Department of Mathematics, Faculty of Science and Technology, University
of Macau, he thanks for the finance support and hospitality. Xiaobin Sun is supported by the National Natural Science Foundation of China (11601196, 11771187, 11931004), the NSF of Jiangsu Province (No. BK20160004) and the Priority Academic Program Development of Jiangsu Higher Education Institutions. Jianliang Zhai is supported by the National Natural Science Foundation of China (11431014, 11671372, 11721101), the Fundamental Research Funds for the Central Universities (No. WK0010450002, WK3470000008), Key Research Program of Frontier Sciences, CAS, No: QYZDB-SSW-SYS009, School Start-up Fund (USTC) KY0010000036.

\end{document}